\theoremstyle{plain}
\newtheorem{theorem}{Theorem}[subsection]
\newtheorem{corollary}[theorem]{Corollary}
\newtheorem{proposition}[theorem]{Proposition}
\newtheorem{lemma}[theorem]{Lemma}
\newtheorem{sublemma}[theorem]{Sublemma}
\theoremstyle{definition}
\newtheorem{definition}[theorem]{Definition}
\newtheorem{example}[theorem]{Example}
\newtheorem{remark}[theorem]{Remark}
\numberwithin{equation}{subsection}
\renewcommand{\emptyset}{\varnothing}
\newcommand{\Union}{\bigcup\limits}
\newcommand{\C}{\mathbb{C}}
\newcommand{\N}{\mathbb{N}}
\newcommand{\Z}{\mathbb{Z}}
\newcommand{\op}{\mathrm{op}}
\DeclareMathOperator{\id}{id}
\newcommand{\BDC}{\mathbf{D}^{\mathrm{b}}}
\newcommand{\DSum}{\bigoplus}
\newcommand{\dsum}[1][]{\mathbin{\oplus_{#1}}}
\newcommand{\ilim}[1][]{\mathop{\varinjlim}\limits_{#1}}
\renewcommand{\to}[1][]{\xrightarrow{#1}}
\def\isoto{\@ifnextchar [{\relisoto}{\absisoto}}
\newcommand{\relisoto}[1][]{\xrightarrow[\sim]{#1}}
\newcommand{\absisoto}[1][]{\xrightarrow{\sim}}
\newcommand{\Endo}[1][]{\mathrm{End}_{\raise1.5ex\hbox to.1em{}#1}}
\newcommand{\Hom}[1][]{\mathrm{Hom}_{\raise1.5ex\hbox to.1em{}#1}}
\newcommand{\RHom}[1][]{\mathrm{RHom}_{\raise1.5ex\hbox to.1em{}#1}}
\newcommand{\Ext}[2][]{\mathrm{Ext}_{\raise1.5ex\hbox to.1em{}#1}^{#2}}
\newcommand{\Tens}[1][]{\mathbin{\otimes_{\raise1.5ex\hbox to-.1em{}#1}}}
\newcommand{\LTens}[1][]{\mathbin{\otimes_{\raise1.5ex\hbox to-.1em{}#1}^{L}}}
\newcommand{\Tor}[2][]{\mathrm{Tor}^{\raise1.5ex\hbox to.1em{}#1}_{#2}}
\newcommand{\sheaffont}[1]{\mathcal{#1}}
\def\sha{\sheaffont{A}}
\def\shb{\sheaffont{B}}
\def\she{\sheaffont{E}}
\def\shf{\sheaffont{F}}
\def\shg{\sheaffont{G}}
\def\shl{\sheaffont{L}}
\def\shm{\sheaffont{M}}
\def\shn{\sheaffont{N}}
\def\sho{\sheaffont{O}}
\def\shr{\sheaffont{R}}
\def\shw{\sheaffont{W}}
\newcommand{\sect}{\varGamma}
\renewcommand{\hom}[1][]{{\sheaffont{H}om}_{\raise1.5ex\hbox to.1em{}#1}}
\newcommand{\aut}[1][]{{\sheaffont{A}ut}_{\raise1.5ex\hbox to.1em{}#1}}
\newcommand{\inn}[1][]{{\sheaffont{I}nn}_{\raise1.5ex\hbox to.1em{}#1}}
\newcommand{\rhom}[1][]{{R\sheaffont{H}om}_{\raise1.5ex\hbox to.1em{}#1}}
\newcommand{\ext}[2][]{{\sheaffont{E}xt}_{\raise1.5ex\hbox to.1em{}#1}^{#2}}
\newcommand{\thom}[1][]{{T\sheaffont{H}om}_{\raise1.5ex\hbox to.1em{}#1}}
\newcommand{\tens}[1][]{\mathbin{\otimes_{\raise1.5ex\hbox to-.1em{}#1}}}
\newcommand{\ltens}[1][]{\mathbin{\otimes_{\raise1.5ex\hbox to-.1em{}#1}^{L}}}
\newcommand{\tor}[2][]{{\sheaffont{T}or}^{\raise1.5ex\hbox to.1em{}#1}_{#2}}
\DeclareMathOperator{\supp}{supp}
\newcommand{\oim}[1]{{#1}_*}
\newcommand{\eim}[1]{{#1}_!}
\newcommand{\opb}[1]{#1^{-1}}
\newcommand{\GHom}[1][]{\mathrm{GHom}_{\raise1.5ex\hbox to.1em{}#1}}
\newcommand{\GExt}[2][]{\mathrm{GExt}_{\raise1.5ex\hbox to.1em{}#1}^{#2}}
\newcommand{\FHom}[1][]{\mathrm{FHom}_{\raise1.5ex\hbox to.1em{}#1}}
\newcommand{\ghom}[1][]{{\sheaffont{GH}om}_{\raise1.5ex\hbox to.1em{}#1}}
\newcommand{\gext}[2][]{{\sheaffont{GE}xt}_{\raise1.5ex\hbox to.1em{}#1}^{#2}}
\newcommand{\fhom}[1][]{{\sheaffont{FH}om}_{\raise1.5ex\hbox to.1em{}#1}}
\newcommand{\g}{\sheaffont{G}}
\newcommand{\tenstop}[1][]{\mathbin{\hat{\otimes}_{\raise1.5ex\hbox to-.1em{}#1}}}
\newcommand{\homtop}[1][]{\sheaffont{L}_{\raise1.5ex\hbox to.1em{}#1}}
\newcommand{\Homtop}[1][]{\mathrm{L}_{\raise1.5ex\hbox to.1em{}#1}}
\def\absdoim#1{\underline{#1}_*}
\def\reldoim[#1]#2{\underline{#2}_{|{#1}*}}
\def\doim{\@ifnextchar [{\reldoim}{\absdoim}}
\def\absdeim#1{\underline{#1}_*}
\def\reldeim[#1]#2{\underline{#2}_{|{#1}*}}
\def\deim{\@ifnextchar [{\reldeim}{\absdeim}}
\def\absdopb#1{\underline{#1}^{-1}}
\def\reldopb[#1]#2{\underline{#2}_{|{#1}}^{-1}}
\def\dopb{\@ifnextchar [{\reldopb}{\absdopb}}
\def\absboim#1{\underline{\underline{#1}}_*}
\def\relboim[#1]#2{\underline{\underline{#2}}_{|{#1}*}}
\def\boim{\@ifnextchar [{\relboim}{\absboim}}
\def\absbeim#1{\underline{\underline{#1}}_*}
\def\relbeim[#1]#2{\underline{\underline{#2}}_{|{#1}*}}
\def\beim{\@ifnextchar [{\relbeim}{\absbeim}}
\def\absbopb#1{\underline{\underline{#1}}^*}
\def\relbopb[#1]#2{\underline{\underline{#2}}_{|{#1}}^*}
\def\bopb{\@ifnextchar [{\relbopb}{\absbopb}}
\newcommand{\coh}{\mathrm{coh}}
\newcommand{\reghol}{\mathrm{r-hol}}
\DeclareMathOperator{\Supp}{Supp}
\newcommand{\rhof}{\rho\text{-\rm f}}
\renewcommand{\reghol}{\mathrm{rh}}
\newcommand{\filt}[2][]{F_{#1}#2}
\newcommand{\Ga}{\C}
\newcommand{\GRGa}{R}
\newcommand{\Gm}{\C^\times}
\newcommand{\infGa}{v_{\mathrm a}}
\newcommand{\infGm}{v_{\mathrm m}}
\newcommand{\multiindex}{J}
\newcommand{\hfield}{\mathbf{k}}
\newcommand{\hfieldo}{\hfield(0)}
\newcommand{\setdef}{;\ }
\newcommand{\h}{\hbar}
\newcommand{\hh}{\mathsf{t}}
\newcommand{\p}{p}
\newcommand{\q}{q}
\newcommand{\pq}{p}
\newcommand{\dq}{q}
\renewcommand{\g}{g}
\newcommand{\reg}{{\text{reg}}}
\newcommand{\ad}{\operatorname{ad}}
\newcommand{\Ad}{\operatorname{Ad}}
\newcommand{\shHom}[1][]{\sheaffont{H}om_{#1}}
\newcommand{\shIso}[1][]{\sheaffont{I}som_{#1}}
\newcommand{\shAut}[1][]{\sheaffont{A}ut_{#1}}
\newcommand{\shEnd}[1][]{\sheaffont{E}nd_{#1}}
\newcommand{\stack}[1]{\mathsf{#1}}
\newcommand{\stka}{\stack{A}}
\newcommand{\stkc}{\stack{C}}
\newcommand{\stke}{\stack{E}}
\newcommand{\stkp}{\stack{P}}
\newcommand{\stkq}{\stack{Q}}
\newcommand{\stkr}{\stack{R}}
\newcommand{\stkt}{\stack{T}}
\newcommand{\stkw}{\stack{W}}
\newcommand{\stkpe}{\stkp_Y}
\newcommand{\stkpw}{\stkp_X}
\newcommand{\stkpr}{\stkp_\rho}
\newcommand{\stkpa}{\stkp'_X}
\newcommand{\stkpao}{\stkp'_{X,0}}
\newcommand{\stkquant}{{\widetilde\stke}}
\newcommand{\shquant}{{\widetilde\she}}
\newcommand{\stkMod}[1][]{\stack{Mod}_{#1}}
\newcommand{\stkalg}[1][]{\stack{Alg}_{#1}}
\newcommand{\stkFun}[1][]{\stack{Fct}_{#1}}
\title{On quantization of complex symplectic manifolds}
\author[A. D'Agnolo]{Andrea D'Agnolo} 
\address{Dipartimento di Matematica Pura ed Applicata\\ 
Universit{\`a} di Padova\\ 
via Trieste 63, 35121 Padova, Italy} 
\email{dagnolo@math.unipd.it}
\author[M. Kashiwara]{Masaki Kashiwara} 
\address{Research Institute for Mathematical Sciences\\
Kyoto University\\
Kyoto, 606-8502, Japan}
\email{masaki@kurims.kyoto-u.ac.jp}
\thanks{The first named author (A.D'A.) expresses his gratitude to the Research Institute for Mathematical Sciences of
Kyoto University for hospitality during the preparation of this paper.}
\subjclass[2010]{53D55,46L65,32C38,14J32}
\keywords{quantization, algebroid stacks, microdifferential operators,
  regular holonomic modules, Calabi-Yau categories}
\begin{document}

\maketitle

\begin{abstract}
Let $X$ be a complex symplectic manifold.
By showing that any Lagrangian subvariety has a unique lift to a
contactification,
we associate to $X$ a triangulated category of regular holonomic
microdifferential modules. If $X$ is compact, this is a Calabi-Yau category
of complex dimension $\dim X+1$.
We further show that regular holonomic microdifferential modules can
be realized as modules over a quantization algebroid canonically
associated to $X$.
\end{abstract}

\setcounter{tocdepth}{1}
\tableofcontents

\section*{Introduction}

Let $X$ be a complex symplectic manifold. As shown in~\cite{PS04} (see also~\cite{Kon01}), $X$ is endowed with a canonical
deformation quantization algebroid $\stkw_X$. Recall that an algebroid is to an
algebra as a gerbe is to a group.  The local model of $\stkw_X$ is an
algebra similar to the one of microdifferential operators, with a central
deformation parameter $\h$. The center of $\stkw_X$ is a subfield
$\hfield$ of formal Laurent series $\C[\h^{-1},\h]]$.

Deformation quantization modules have now been studied quite
extensively (see~\cite{DS07,KS08,KS10} and also \cite{NT04,Tsy09} for related results), and they turned out to be
useful in other contexts as well (see e.g.~\cite{KR08}).  Of particular
interest are modules supported by Lagrangian
subvarieties. It is conjectured in~\cite{KS08} that, if $X$ is compact, the triangulated category of regular
holonomic deformation-quantization modules is
Calabi-Yau of dimension $\dim X$ over $\hfield$. 

\medskip

There are some cases
(representation theory, homological mirror symmetry, quantization in the sense of~\cite{GW08}) where one would
like to deal with categories whose center is $\C$ instead of $\hfield$.
In the first part of this paper, we associate to $X$
a $\C$-linear triangulated category of regular holonomic
microdifferential modules. 
If $X$ is compact, this category is Calabi-Yau of dimension $\dim X+1$ over $\C$. 

Our construction goes as follows.  For a
possibly singular Lagrangian subvariety $\Lambda\subset X$, we prove that there is a
unique contactification $\rho\colon Y\to X$ of a neighborhood of
$\Lambda$ and a Lagrangian subvariety $\Gamma\subset Y$ such that
$\rho$ induces a homeomorphism between $\Gamma$ and $\Lambda$.  As
shown in~\cite{Kas96}, the contact manifold $Y$ is endowed with a
canonical microdifferential algebroid $\stke_Y$.  We define the triangulated category of regular
holonomic microdifferential modules along $\Lambda$ as the bounded derived category of regular
holonomic $\stke_Y$-modules along $\Gamma$. We then take the direct
limit over the inductive family of Lagrangian subvarieties
$\Lambda\subset X$.

\medskip

In the second part of this paper, we show that regular holonomic
microdifferential modules can be realized as modules over a quantization algebroid
$\stkquant_X$ canonically associated to $X$. 
More precisely, if $\Gamma\subset Y$ is a lift of $\Lambda\subset X$ 
as above, we prove that the category of coherent $\stke_Y$-modules supported on
$\Gamma$ is fully faithfully embedded in the category of coherent
$\stkquant_X$-modules supported on $\Lambda$.

Our construction of
$\stkquant_X$ is similar to the construction of $\stkw_X$
in~\cite{PS04}, which was in turn similar to the construction of
$\stke_Y$ in~\cite{Kas96}.  Here, we somewhat simplify matters by
presenting an abstract way of obtaining an algebroid from the
data of a gerbe endowed with an algebra valued functor.
Let us briefly recall the constructions of $\stke_Y$,
$\stkw_X$ and present the construction of $\stkquant_X$.

Denote by $P^*M$ the projective cotangent bundle to a complex manifold
$M$ and by $\she_M$ the ring of microdifferential operators on $P^*M$
as in~\cite{S-K-K}. Recall that, in a local system
of coordinates, $\she_M$ is endowed with the anti-involution given by
the formal adjoint of total symbols.

Let $Y$ be a complex contact manifold. By Darboux theorem, the local
model of $Y$ is an open subset of $P^*M$. By definition, a
microdifferential algebra $\she$ on an open subset $V\subset Y$ is a
$\C$-algebra locally isomorphic to $\she_M$. Assume that $\she$ is
endowed with an anti-involution $*$. Any two such pairs $(\she',*')$
and $(\she,*)$ are locally isomorphic. Such isomorphisms are not
unique, and in general it is not possible to patch the algebras $\she$
together in order to get a globally defined microdifferential algebra
on $Y$. However, the automorphisms of $(\she,*)$ are all inner and are
in bijection with a subgroup of invertible elements of $\she$. This is
enough to prove the existence of a microdifferential algebroid
$\stke_Y$, i.e.\ an algebroid locally represented by microdifferential algebras.

Denote by $T^*M$ the cotangent bundle to a complex manifold $M$, by
$(t;\tau)$ the symplectic coordinates on $T^*\C$, and consider the
projection
\[
P^*(M\times\C) \to[\rho] T^*M, \quad (x,t;\xi,\tau)\mapsto
(x,\xi/\tau)
\]
defined for $\tau\neq 0$.  This is a principal $\C$-bundle, with
action given by translation in the $t$ variable. Note that,
for $\lambda\in \C$, the outer
isomorphism $\Ad(e^{\lambda\partial_t})$ of $\oim\rho\she_{M\times\C}$
acts by translation
$t\mapsto t+\lambda$ at the level of total symbols.

Let $X$ be a complex symplectic manifold. By Darboux theorem, the
local model of $X$ is an open subset of $T^*M$. Let $\rho\colon V\to
U$ be a contactification of an open subset $U\subset X$. By
definition, this is a principal $\C$-bundle whose local model is the
projection $\{\tau\neq 0\}\to T^*M$ above. Consider a quadruple
$(\rho,\she,*,\h)$ of a contactification $\rho\colon V\to U$, a
microdifferential algebra $\she$ on $V$, an anti-involution $*$ and an
operator $\h\in\she$ locally corresponding to $\partial_t^{-1}$. One
could try to mimic the above construction of the microdifferential
algebroid $\stke_Y$ in order to get an algebroid from the algebras
$\oim\rho\she$. This fails because the automorphisms of
$(\rho,\she,*,\h)$ given by $\Ad(e^{\lambda\h^{-1}})$ for $\lambda\in \C$ 
are not inner.
There are two natural ways out.

The first possibility, utilized in~\cite{PS04}, is to replace the
algebra $\oim\rho\she$ by its subalgebra $\shw = C^0_\h\oim\rho\she$
of operators commuting with $\h$. Locally, this corresponds to the
operators of $\oim\rho\she_{M\times\C}$ whose total symbol does not
depend on $t$. Then the action of $\Ad(e^{\lambda\h^{-1}})$ is trivial
on $\shw$, and these algebras patch together to give the
deformation-quantization algebroid $\stkw_X$.

The second possibility, which we exploit here, is to make
$\Ad(e^{\lambda\h^{-1}})$ an inner automorphism. This is obtained by
replacing the algebra $\oim\rho\she$ by the algebra
\[
\shquant = \DSum_{\lambda\in\Ga} \bigl(C_\h^\infty\oim\rho \she\bigr)
\, e^{\lambda\h^{-1}},
\]
where $C_\h^\infty\oim\rho \she = \{a \in \oim\rho\she\setdef
\ad(\h)^N(a)=0,\ \exists N\geq 0 \}$ locally corresponds to operators
in $\oim\rho\she_{M\times\C}$ whose total symbol is polynomial in
$t$. By patching these algebras we get the quantization algebroid
$\stkquant_X$. The deformation parameter $\h$ is not central in
$\stkquant_X$. We show that the centralizer of $\h$ in $\stkquant_X$
is equivalent to the twist of $\stkw_X \tens[\C]
(\DSum\nolimits_{\lambda\in\C}\C e^{\lambda\h^{-1}})$ by the gerbe
parameterizing the primitives of the symplectic $2$-form.

\medskip

In an appendix at the end of the paper, we give an alternative
construction of the deformation-quantization algebroid
$\stkw_X$. Instead of using contactifications, we consider as objects
deformation-quantization algebras endowed with compatible
anti-involution and $\C$-linear derivation. We thus show that $\stkw_X$
itself is endowed with a canonical $\C$-linear derivation. One could
then easily prove along the lines of~\cite{Pol08} that $\stkw_X$ is the
unique $\hfield$-linear deformation-quantization algebroid which has trivial graded and is endowed
with compatible anti-involution and $\C$-linear derivation.  

Finally, we compare regular holonomic quantization modules with regular holonomic deformation-quantization modules.

\medskip

This paper is organized as follows.

In section~\ref{se:cat}, after recalling the definitions of gerbe and
of algebroid on a topological space, we explain how to obtain an
algebroid from the data of a gerbe endowed with an algebra valued
functor.

In section~\ref{se:geo}, we review some notions from contact and
symplectic geometry, discussing in particular the gerbe parameterizing
the primitives of the symplectic $2$-form.  We further show how a
Lagrangian subvariety lifts to a contactification.

In section~\ref{se:contact}, we first recall the construction of the
microdifferential algebroid of~\cite{Kas96} in terms of algebroid
data.  Then we show how to associate to a complex symplectic manifold
a triangulated category of regular holonomic microdifferential
modules.

In section~\ref{se:sympl}, we start by giving a construction of the
deformation-quantization algebroid of~\cite{PS04} in terms of
algebroid data.  Then, with the same algebroid data, we construct the
algebroid $\stkquant_X$.

In section~\ref{se:mod}, we prove coherency of quantization algebras
and show how to realize regular holonomic microdifferential modules as
modules over $\stkquant_X$.

In appendix~\ref{se:dq}, we give an alternative description of the
deformation quantization algebroid using deformation-quantization algebras endowed with compatible anti-involution and $\C$-linear derivation. 
We also compare regular holonomic deformation-quantization
modules with regular holonomic quantization modules.

\medskip

The results of this paper were announced in~\cite{DK10}, to which we refer.

\section{Gerbes and algebroid stacks}\label{se:cat}

We review here some notions from the theory of stacks, in the sense of
sheaves of categories, recalling in particular the definitions of
gerbe and of algebroid (refer to \cite{Gir71,KS06,Kon01,DP05}). We
then explain how to obtain an algebroid from the data of a gerbe
endowed with an algebra valued functor.

\subsection{Review on stacks}\label{sse:gerbes}

Let $X$ be a topological space.

A prestack $\stkc$ on $X$ is a lax analogue of a presheaf of
categories, in the sense that for a chain of open subsets $W\subset
V\subset U$ the restriction functor $\stkc(U)\to \stkc(W)$ coincides
with the composition $\stkc(U)\to \stkc(V)\to \stkc(W)$ only up to an
invertible transformation (satisfying a natural cocycle condition for
chains of four open subsets). The prestack $\stkc$ is called separated
if for any $U\subset X$ and any $\p,\p'\in\stkc(U)$ the presheaf
$U\supset V\mapsto \Hom[\stkc(V)](\p|_V,\p'|_V)$ is a sheaf. We denote
it by $\shHom[\stkc](\p,\p')$. A stack is a separated prestack
satisfying a natural descent condition (see e.g.~\cite[Chapter~19]{KS06}). If $\rho\colon Y\to X$ is a
continuous map, we denote by $\opb \rho\stkc$ the pull back on $Y$ of
a stack $\stkc$ on $X$.

A groupoid is a category whose morphisms are all invertible. A gerbe
on $X$ is a stack of groupoids which is locally non empty and locally
connected, i.e.\ any two objects are locally isomorphic. Let $\shg$ be
a sheaf of commutative groups. A $\shg$-gerbe is a gerbe $\stkp$
endowed with a group homomorphism $\shg \to \shAut(\id_\stkp)$. We
denote by $\stkp \times^\shg \stkp'$ the contracted product of two
$\shg$-gerbes. A $\shg$-gerbe $\stkp$ is called invertible if
$\shg|_U\to\shAut[\stkp](\p)$ is an isomorphism of groups for any
$U\subset X$ and any $\p\in\stkp(U)$.

Let $\shr$ be a commutative sheaf of rings. For an
$\shr$-algebra $\sha$ denote by $\stkMod(\sha)$ the stack of left
$\sha$-modules. An $\shr$-linear stack is a stack $\stka$ such that
for any $U\subset X$ and any $\p,\p'\in\stka(U)$ the sheaves
$\shHom[\stka](\p',\p)$ have an $\shr|_U$-module structure compatible
with composition and restriction.  The stack of left $\stka$-modules
$\stkMod(\stka) = \stkFun[\shr](\stka,\stkMod(\shr))$ has
$\shr$-linear functors as objects and transformations of functors as
morphisms.

Let $\shl$ be a commutative $\shr$-algebra and $\stka$ an
$\shr$-linear stack.  An action of $\shl$ on $\stka$ is the data of
$\shr|_U$-algebra morphisms $\shl|_U\to\shEnd[\stka](\p)$ for any
$U\subset X$ and any $\p\in\stka(U)$, compatible with
restriction. Then $\shl$ acts as a Lie algebra on
$\shHom[\stka](\p',\p)$ by $[l,f] = l_{\p} f - f l_{\p'}$, where $l_\p$
denotes the image of $l\in\shl(U)$ in $\shEnd[\stka](\p)$. This gives
a filtration of $\stka$ by the centralizer series
\begin{align*}
  C_\shl^0\shHom[\stka](\p',\p)
  &= \{f\setdef [l,f] = 0,\ \forall l\in\shl \}, \\
  C_\shl^{i}\shHom[\stka](\p',\p) &= \{f\setdef [l,f] \in C_\shl^{i-1},\
  \forall l\in\shl \} \quad \text{for any } i> 0.
\end{align*}
Denote by $C_\shl^0\stka$ and $C_\shl^\infty\stka$ the substacks of
$\stka$ with the same objects as $\stka$ and morphisms
$C_\shl^0\shHom[\stka]$ and $\Union\nolimits_i C_\shl^i\shHom[\stka]$,
respectively. Note that $C_\shl^0\stka$ is an $\shl$-linear stack and
$C_\shl^\infty\stka$ is an $\shr$-linear stack.

An $\shr$-algebroid $\stka$ is an $\shr$-linear stack which is locally
non empty and locally connected by isomorphisms. Thus, an algebroid is
to a sheaf of algebras as a gerbe is to a sheaf of groups. For
$\p\in\stka(U)$, set $\sha_\p = \shEnd[\stka](\p)$.  Then $\stka|_U$
is equivalent to the full substack of $\stkMod(\sha_p^\op)$ whose objects
are locally free modules of rank one. 
(Here $\sha_p^\op$ denotes the opposite ring of $\sha_p$.)
Moreover, there is an
equivalence $\stkMod(\stka|_U) \simeq \stkMod(\sha_\p)$.  One says
that $\stka$ is represented by an $\shr$-algebra $\sha$ if
$\sha\simeq\sha_p$ for some $\p\in\stka(X)$.  The $\shr$-algebroid
$\stka$ is called invertible if $\sha_\p\simeq\shr|_U$ for any
$U\subset X$ and any $\p\in\stka(U)$.

The pull-back and tensor product of algebroids are still
algebroids. The following lemma is obvious.

\begin{lemma}\label{lem:LiaAlg}
  Let $\stka$ be an $\shr$-algebroid endowed with an action of
  $\shl$. If $C_\shl^0\stka$ is locally connected by isomorphisms,
  then $C_\shl^0\stka$ and $C_\shl^\infty\stka$ are algebroids.
\end{lemma}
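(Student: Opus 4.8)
The plan is to verify the two defining conditions of an $\shr$-algebroid, namely local non-emptiness and local connectedness by isomorphisms, for each of the two substacks; everything else is already in place, since $C_\shl^0\stka$ has been recorded to be an $\shl$-linear (hence $\shr$-linear) stack and $C_\shl^\infty\stka$ an $\shr$-linear stack. Thus no stackiness or linearity remains to be checked, and the whole content is the two conditions above.

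First I would observe that, by construction, both $C_\shl^0\stka$ and $C_\shl^\infty\stka$ have the same objects as $\stka$. Since $\stka$ is an algebroid it is locally non-empty, and this property refers only to objects, so it is inherited by both substacks. For $C_\shl^0\stka$ the second condition, local connectedness by isomorphisms, is exactly the hypothesis of the lemma; together with its $\shl$-linear (a fortiori $\shr$-linear) stack structure this already exhibits $C_\shl^0\stka$ as an $\shr$-algebroid (indeed an $\shl$-algebroid).

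It then remains only to transfer local connectedness by isomorphisms from $C_\shl^0\stka$ to $C_\shl^\infty\stka$. The key point is the inclusion of Hom-subsheaves $C_\shl^0\shHom[\stka](\p',\p)\subseteq C_\shl^\infty\shHom[\stka](\p',\p)$ for all objects $\p,\p'$, which holds because $C_\shl^0\shHom[\stka]$ is the initial term of the increasing filtration whose union defines $C_\shl^\infty\shHom[\stka]$. Since the canonical inclusion $C_\shl^0\stka\hookrightarrow C_\shl^\infty\stka$ is the identity on objects and respects composition and identity morphisms, an isomorphism of $C_\shl^0\stka$ — a morphism together with a two-sided inverse, both internal to $C_\shl^0\stka$ — is carried verbatim to an isomorphism of $C_\shl^\infty\stka$. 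Hence any two objects that are locally isomorphic in $C_\shl^0\stka$ are locally isomorphic in $C_\shl^\infty\stka$, so the latter is locally connected by isomorphisms and is therefore an $\shr$-algebroid. There is essentially no obstacle here — this is why the statement is called obvious — the only point demanding a moment's care being that ``isomorphism'' must be read internally to each substack rather than in $\stka$; because the inclusion is the identity on objects, morphisms, composition and units, this subtlety dissolves at once.
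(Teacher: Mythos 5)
Your argument is correct: the paper gives no proof (it declares the lemma obvious), and your write-up is precisely the straightforward verification one would supply — local non-emptiness is inherited since objects are unchanged, local connectedness of $C_\shl^0\stka$ is the hypothesis, and the inclusion $C_\shl^0\shHom[\stka]\subseteq C_\shl^\infty\shHom[\stka]$ transports isomorphisms to $C_\shl^\infty\stka$. Nothing further is needed.
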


\subsection{Algebroid data}\label{sse:algdata}

Let $\shr\text{-}\stkalg$ be the stack on $X$ with $\shr$-algebras as
objects and $\shr$-algebra homomorphisms as morphisms.

\begin{definition}\label{def:lift}
  An $\shr$-algebroid data is a triple $(\stkp,\Phi,\ell)$ with
  $\stkp$ a gerbe, $\Phi\colon\stkp\to\shr\text{-}\stkalg$ a functor
  of prestacks and $\ell$ a collection of liftings of group
  homomorphisms
  \begin{equation}
    \label{eq:PhiAd}
    \vcenter{\xymatrix{
        & \Phi(\p)^\times \ar[d]^{\Ad} \\ \shEnd[\stkp](\p) \ar[r]^-\Phi
        \ar[ur]^{\ell_\p} & \shAut[{\shr\text{-}\stkalg}](\Phi(\p))
      }}
    \qquad \forall U\subset X,\ \forall \p\in\stkp(U),
  \end{equation}
  compatible with restrictions and such that for any
  $\g\in\shHom[\stkp](\p', \p)$ and any $\phi'\in\shEnd[\stkp](\p')$
  one has
  \begin{equation}
    \label{eq:PhiCond}
    \ell_\p(\g\phi' \g^{-1}) = \Phi(\g)(\ell_{\p'}(\phi')).
  \end{equation}
\end{definition}

Note that condition \eqref{eq:PhiCond} ensures compatibility with the
equality $\Phi(\g\phi' \g^{-1}) = \Phi(\g)\Phi(\phi')\Phi(\g^{-1})$.

\begin{remark}
  Denote by $\stack{Grp}$ the stack on $X$ with sheaves of groups as
  objects and group homomorphisms as morphisms. The $\shr$-algebroid
  data $(\stkp,\Phi,\ell)$ induce three natural functors $E,A,F\colon
  \stkp\to \stack{Grp}$ defined by $E(\p) = \shEnd[\stkp](p)$, $A(\p)
  = \shAut(\Phi(p))$ and $F(\p) = \Phi(p)^\times$ for $p\in\stkp$. In
  all three cases, a morphism $\p'\to\p$ is sent to its adjoint. Then
  the commutative diagram \eqref{eq:PhiAd} corresponds to a
  commutative diagram of transformations of functors
  \[
  \xymatrix@C=8ex{ & F \ar[d]^{\Ad} \\ E \ar[r]_-\Phi \ar[ur]^{\ell} & A.  }
  \]
\end{remark}

\begin{remark}
  There is a natural interpretation of $\shr$-algebroid data in terms
  of $2$-categories (refer to~\cite[\S9]{Str96}, where $2$-categories
  are called bicategories). Denote by $\shr\text{-}\mathbf{Alg}$ the
  $2$-prestack on $X$ obtained by enriching $\shr\text{-}\stkalg$ with
  set of $2$-arrows $f'\Rightarrow f$ given by
  \[
  \{b\in\sha \setdef bf'(a') = f(a')b,\ \forall a'\in\sha'\},
  \]
  for two $\shr$-algebra morphisms $f,f'\colon\sha'\to\sha$. In
  particular, $f\simeq f'$ if and only if $f' = \Ad(b)f$ for some
  $b\in\sha^\times$.  The $\shr$-algebroid data $(\stkp,\Phi,\ell)$ is
  equivalent to the data of the lax functor of $2$-prestacks
  \[ {\bf \Phi}\colon \stkp \to \shr\text{-}\mathbf{Alg},
  \]
  where $\stkp$ has trivial $2$-arrows and ${\bf \Phi}$ is obtained by
  enriching $\Phi$ at the level of $2$-arrows by ${\bf
    \Phi}(\id_{\g'\to \g}) = \ell_\p(\g'\g^{-1})$ for a
  morphism $\g'\to \g$ in $\stkp(\p)$.
\end{remark}

We will prove in the next proposition that the following description
associates an $\shr$-prestack $\stka_0$ to the data
$(\stkp,\Phi,\ell)$.

\begin{itemize}
\item[(i)] For an open subset $U\subset X$, objects of $\stka_0(U)$
  are the same as those of $\stkp(U)$.

\item[(ii)] For $\p,\p' \in \stka_0(U)$, the sheaf of morphisms is
  defined by
  \[
  \shHom[\stka_0]( \p', \p ) = \Phi(\p)
  \mathop{\times}\limits^{\shEnd[\stkp](\p)} \shHom[\stkp](\p',\p).
  \]
  This means that morphisms $\p'\to \p$ in $\stka_0$ are equivalence
  classes $[a,\g]$ of pairs $(a,\g)$ with $a\in\Phi(\p)$ and $\g\colon
  \p'\to\p$ in $\stkp$, for the relation
  \[
  (a,\,\phi \g) \sim (a \ell_\p(\phi),\,\g), \qquad \forall
  \phi\in\shEnd[\stkp](\p).
  \]

\item[(iii)] Composition of $[a,\g]\colon \p' \to \p$ and
  $[a',\g']\colon \p'' \to \p'$ is given by
  \[ [a,\g] \circ [a',\g'] = [a \g(a'),\g\g'].
  \]
  Here we set for short $\g(a') = \Phi(\g)(a')$.

\item[(iv)] For two morphisms $[a,\g], [a',\g'] \colon \p'\to\p$ and
  $r\in \shr$, the $\shr$-linear structure of $\stka_0$ is given by
  \[
  r[a,\g] = [r a,\g], \quad [a,\g] + [a',\g'] = [a + a'
  \ell_\p(\g'\g^{-1}),\g].
  \]

\item[(v)] The restriction functors are the natural ones.
\end{itemize}

\begin{proposition}\label{pro:P}
  Let $(\stkp,\Phi,\ell)$ be an $\shr$-algebroid data. The description
  (i)--(v) above defines a separated $\shr$-prestack $\stka_0$ on
  $X$. The associated stack $\stka$ is an $\shr$-algebroid endowed
  with a functor $J\colon\stkp \to \stka$ such that
  $\shEnd[\stka](J(\p)) \simeq \Phi(\p)$ for any $\p\in\stkp$.
\end{proposition}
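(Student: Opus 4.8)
The plan is to verify that (i)--(v) define a separated $\shr$-prestack $\stka_0$ and then to transport the needed properties to its associated stack $\stka$. I would first dispose of the parts that do not use the axioms of Definition~\ref{def:lift}: the morphism sheaves are quotients by the relation $(a,\phi\g)\sim(a\ell_\p(\phi),\g)$, the identity of $\p$ is $[1_{\Phi(\p)},\id_\p]$ (a two-sided unit once one records $\ell_\p(\id_\p)=1$ and that $\Phi(\g)$ is a unital $\shr$-algebra homomorphism), the $\shr$-linear structure and the restriction functors are inherited from those of $\stkp$ and $\Phi$ (so the cocycle condition comes for free), and associativity of composition reduces to $\Phi(\g\g')=\Phi(\g)\Phi(\g')$ together with associativity in $\stkp$ and in $\Phi(\p)$.

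The main obstacle, and the only place where the two conditions of Definition~\ref{def:lift} are genuinely used, is the well-definedness of composition $[a,\g]\circ[a',\g']=[a\,\g(a'),\g\g']$, which must be checked to be independent of the chosen representatives of both factors. Replacing the right-hand factor $(a',\g')$ by $(a'\ell_{\p'}(\phi'),\psi')$ with $\g'=\phi'\psi'$, one rewrites $\g\phi'=(\g\phi'\g^{-1})\g$ and is reduced exactly to the identity $\ell_\p(\g\phi'\g^{-1})=\Phi(\g)(\ell_{\p'}(\phi'))$, i.e.\ to condition~\eqref{eq:PhiCond}. Replacing the left-hand factor $(a,\g)$ by $(a\ell_\p(\phi),\psi)$ with $\g=\phi\psi$, one is reduced to the identity $\Phi(\phi)(b)=\ell_\p(\phi)\,b\,\ell_\p(\phi)^{-1}$, i.e.\ to the commutativity of~\eqref{eq:PhiAd} expressing $\Phi(\phi)=\Ad(\ell_\p(\phi))$. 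Once both of these checks are carried out, (i)--(v) indeed assemble into an $\shr$-prestack.

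For separatedness I would argue locally. On an open set small enough that $\p'$ and $\p$ are isomorphic in the gerbe $\stkp$, the sheaf $\shHom[\stkp](\p',\p)$ is a torsor under the group $\shEnd[\stkp](\p)$; choosing a local trivialization, the contracted product $\Phi(\p)\mathop{\times}\limits^{\shEnd[\stkp](\p)}\shHom[\stkp](\p',\p)$ becomes (noncanonically) isomorphic to the sheaf $\Phi(\p)$. Since being a sheaf is a local property, $\shHom[\stka_0](\p',\p)$ is a sheaf, so $\stka_0$ is separated, and I then pass to its associated stack $\stka$, which inherits the $\shr$-linear structure.

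Finally I would identify the endomorphisms and verify the algebroid axioms. Using the relation with the second coordinate $\id_\p$, every class in $\shEnd[\stka_0](\p)$ has a unique normal form $[a,\id_\p]$, and the computations $[a,\id]\circ[b,\id]=[ab,\id]$ and $[a,\id]+[b,\id]=[a+b,\id]$ show $\shEnd[\stka_0](\p)\cong\Phi(\p)$ as $\shr$-algebras; since a separated prestack embeds fully faithfully into its stackification, this gives $\shEnd[\stka](J(\p))\simeq\Phi(\p)$. The functor $J$ is taken to be the identity on objects and to send $\g\colon\p'\to\p$ to $[1_{\Phi(\p)},\g]$, which is functorial because $\Phi(\g)(1)=1$ and is invertible with inverse $[1,\g^{-1}]$. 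Local nonemptiness of $\stka$ follows from that of $\stkp$, and local connectedness by isomorphisms follows since any local isomorphism $\g$ of $\stkp$ yields the isomorphism $J(\g)$ of $\stka$; hence $\stka$ is an $\shr$-algebroid with the stated endomorphism sheaves.
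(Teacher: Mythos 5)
Your proof is correct and follows essentially the same route as the paper: the heart of the matter is the well-definedness of composition, which you reduce, exactly as the paper does, to the commutativity of \eqref{eq:PhiAd} when changing the representative of the left factor and to condition \eqref{eq:PhiCond} when changing the right factor, and you identify $\shEnd[\stka](J(\p))\simeq\Phi(\p)$ via the same normal form $[a,\g]\mapsto a\,\ell_\p(\g)$. Your explicit checks of separatedness (via local triviality of the $\shEnd[\stkp](\p)$-torsor) and of the algebroid axioms fill in details the paper leaves implicit, but do not change the argument.
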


\begin{proof}
  (a) Let us show that the composition is well defined. Consider two
  composable morphisms $[a,\g]\colon \p' \to \p$ and $[a',\g']\colon
  \p'' \to \p'$. At the level of representatives, set $(a,\g) \circ
  (a',\g') = (a \g(a'),\g\g')$.

  \smallskip\noindent (a-i) Let us show that for
  $\phi\in\shEnd[\stkp](\p)$ we have
  \[
  (a,\, \phi \g) \circ (a',\g') \sim (a \ell_\p(\phi),\, \g) \circ
  (a',\g').
  \]
  For this, we have to check that
  \[
  (a \phi(\g(a')),\, \phi \g \g') \sim (a\ell_\p(\phi)\g(a'),\, \g
  \g').
  \]
  This follows from
  \[
  a\ell_\p(\phi) \g(a') = a\phi(\g(a')) \ell_\p(\phi).
  \]

  \smallskip\noindent (a-ii) Similarly, for
  $\phi'\in\shEnd[\stkp](\p')$ we have to prove that
  \[
  (a,\g) \circ (a',\phi'\g') \sim (a,\g) \circ (a'
  \ell_{\p'}(\phi'),\, \g').
  \]
  In other words, we have to check that
  \[
  (a \g(a'),\, \g \phi' \g') \sim (a \g(a' \ell_{\p'}(\phi')),\, \g
  \g' ).
  \]
  This follows from $\g \phi' \g' = (\g \phi' \g^{-1}) \g\g'$ and
  \[
  a \g(a' \ell_{\p'}(\phi')) = a \g(a') \g(\ell_{\p'}(\phi')) = a
  \g(a') \ell_\p(\g\phi'\g^{-1}),
  \]
  where the last equality is due to \eqref{eq:PhiCond}.

  \smallskip\noindent (a-iii) Associativity is easily checked.

  \smallskip\noindent (b) The $\shr$-linear structure is well defined
  by an argument similar to that in part (a) above.

  \smallskip\noindent (c) The functor $J\colon\stkp \to \stka$ is
  induced by the functor $J_0\colon\stkp \to \stka_0$ defined by
  $\p\mapsto\p$ on objects and $\g\mapsto [1,\g]$ on morphisms. The
  morphism $\Phi(\p) \to \shEnd[\stka](J(\p))$, $a\mapsto [a,\id]$ has
  an inverse given by $[a,\g] \mapsto a \ell_\p (\g)$.
\end{proof}

Note that the functor $J\colon\stkp \to \stka$ is neither faithful nor full, in general.

\begin{remark}
  For an $\shr$-algebroid $\stka$, denote by $\stka^\times$ the gerbe
  with the same objects as $\stka$ and isomorphisms as morphisms. Then
  $\stka$ is the $\shr$-algebroid associated with the data
  $(\stka^\times, \Phi_\stka, \ell)$, where $\Phi_\stka(\p) =
  \shEnd[\stka](\p)$ and $\ell_\p$ is the identity.
\end{remark}

\begin{example}
\label{ex:1/2}
Let $X$ be a complex manifold and $\sho_X$ its structure sheaf.
To an invertible $\sho_X$-module $\shl$ one associates an invertible $\Z/2\Z$-gerbe $\stkp_{\shl^{\otimes 1/2}}$ defined as follows.
\begin{itemize}
\item[(i)]
Objects on $U$ are pairs $(\shf,f)$ where $\shf$ is an invertible $\sho_U$-module and $f\colon \shf^{\otimes 2} \isoto \shl$ is an $\sho_U$-linear isomorphism. 
\item[(ii)]
If $(\shf',f')$ is another object, a morphism $(\shf',f')\to(\shf,f)$ is an $\sho_U$-linear isomorphism $\varphi\colon \shf'\isoto \shf$, such that $f' = f\varphi^{\otimes 2}$.
\end{itemize}
Note that any $\psi\in\shEnd[\stkp_{\shl^{\otimes 1/2}}]\bigl((\shf,f)\bigr)$ is a locally constant $\Z/2\Z$-valued function.
Denote by $\C_{\shl^{\otimes 1/2}}$ the invertible $\C$-algebroid associated with the data $(\stkp_{\shl^{\otimes 1/2}},\Phi,\ell)$, where $\Phi\bigl((\shf,f)\bigr)=\C_U$, $\Phi(\varphi)=\id$, $\ell_{(\shf,f)}(\psi) = \psi$.
\end{example}

\section{Contactification of symplectic manifolds}\label{se:geo}

We first review here some notions from contact and symplectic
geometry. In particular, we discuss the gerbe parameterizing the
primitives of the symplectic $2$-form. Then, we show how any
Lagrangian subvariety of a complex symplectic manifold can be uniquely
lifted to a local contactification.

\subsection{The gerbe of primitives}

Let $X$ be a complex manifold and $\sho_X$ its structure sheaf. Denote
by $TX$ and $T^*X$ the tangent and cotangent bundle, respectively, and
by $\Theta_X$ and $\Omega^{1}_X$ their sheaves of sections. For $k\in\Z$
denote by $\Omega_X^{k}$ the sheaf of holomorphic $k$-forms. For
$v\in\Theta_X$ denote by $i_v\colon \Omega_X^{k} \to \Omega_X^{k-1}$ the
inner derivative and by $L_v\colon \Omega_X^{k} \to \Omega_X^{k}$ the Lie
derivative.

\medskip

Let $\omega\in\sect(X;\Omega_X^{2})$ be a $2$-form which is closed,
i.e.\ $d\omega=0$.

\begin{definition}
  The gerbe $\stkc'_\omega$ on $X$ is the stack associated with the
  separated prestack defined as follows.
  \begin{enumerate}
  \item Objects on $U\subset X$ are primitives of $\omega|_U$, i.e.\
    $1$-forms $\theta\in\sect(U;\Omega^{1}_X)$ such that $d \theta =
    \omega|_U$.

  \item If $\theta'$ is another object, a morphism $\theta' \to
    \theta$ is a function $\varphi\in\sect(U;\sho_X)$ such that
    $d\varphi = \theta' - \theta$.  Composition with $\varphi'\colon
    \theta'' \to \theta'$ is given by $\varphi\circ\varphi' = \varphi
    + \varphi'$.
  \end{enumerate}
\end{definition}

The following result is clear.

\begin{lemma}\label{lem:Cw1w2}
  \begin{itemize}
  \item [(i)] The stack $\stkc'_\omega$ is an invertible $\C$-gerbe.

  \item [(ii)] If $\omega'\in\Omega_X^{2}(X)$ is another closed
    $2$-form, there is an equivalence
    \[
    \stkc'_{\omega}\mathop{\times}\limits^{\Ga}\stkc'_{\omega'} \isoto
    \stkc'_{\omega+\omega'}.
    \]
  \end{itemize}
\end{lemma}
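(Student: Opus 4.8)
The plan is to verify each assertion directly from the definition of $\stkc'_\omega$, since both statements are essentially formal once one unwinds what the objects and morphisms are.

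For part~(i), I would first check that $\stkc'_\omega$ is a gerbe, i.e.\ that it is locally nonempty and locally connected. Local nonemptiness is exactly the Poincar\'e lemma: since $d\omega=0$, every point of $X$ has a neighborhood $U$ on which $\omega|_U$ admits a primitive $\theta$, so $\stkc'_\omega(U)\neq\emptyset$. For local connectedness, given two primitives $\theta,\theta'$ on $U$, the $1$-form $\theta'-\theta$ is closed (both have differential $\omega|_U$), hence locally exact, so after shrinking $U$ there is $\varphi\in\sect(U;\sho_X)$ with $d\varphi=\theta'-\theta$, i.e.\ a morphism $\theta'\to\theta$; thus any two objects are locally isomorphic. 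To see it is a $\C$-gerbe, I note that for a fixed object $\theta$ an endomorphism is a function $\varphi$ with $d\varphi=0$, i.e.\ a locally constant $\C$-valued function, so there is a canonical isomorphism $\C\isoto\shAut[\stkc'_\omega](\theta)$; since composition of morphisms is given by \emph{addition} of functions, this identifies the automorphism group of the identity functor with $\C$ and makes the gerbe $\C$-linear. Finally, invertibility is precisely the statement that $\C|_U\to\shAut[\stkc'_\omega](\theta)$ is an isomorphism for every $U$ and every $\theta$, which is what we just observed: the kernel of $d$ on $\sho_U$ is exactly $\C|_U$.

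For part~(ii), I would construct the equivalence $\stkc'_\omega\times^{\Ga}\stkc'_{\omega'}\isoto\stkc'_{\omega+\omega'}$ at the level of the defining prestacks and then pass to associated stacks. On objects, send a pair $(\theta,\theta')$ of primitives of $\omega|_U$ and $\omega'|_U$ to the sum $\theta+\theta'$, which satisfies $d(\theta+\theta')=\omega|_U+\omega'|_U=(\omega+\omega')|_U$ and so is a primitive of $\omega+\omega'$. On morphisms, a pair $(\varphi,\varphi')$ with $d\varphi=\theta_1-\theta_2$ and $d\varphi'=\theta_1'-\theta_2'$ maps to $\varphi+\varphi'$, which gives a morphism $\theta_1+\theta_1'\to\theta_2+\theta_2'$ since $d(\varphi+\varphi')=(\theta_1+\theta_1')-(\theta_2+\theta_2')$. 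This is manifestly compatible with composition (addition of functions) and with the $\C$-actions, the latter compatibility being exactly the identification used to form the contracted product over $\Ga=\C$; this is what makes the assignment descend to the contracted product rather than merely the product. The functor is essentially surjective and fully faithful at the prestack level because summation of primitives is surjective (any primitive of $\omega+\omega'$ differs from $\theta+\theta'$ by a closed, hence locally exact, form) and bijective on morphism sheaves up to the contraction relation.

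I do not expect any serious obstacle here, which is why the lemma is stated as ``clear''. The only point that requires a moment's care is bookkeeping the $\C$-gerbe structure in part~(ii): one must check that the summation functor respects the $\C$-gerbe structures in the precise sense needed for it to factor through the contracted product $\times^{\Ga}$, rather than just giving a functor out of the ordinary product. This amounts to verifying that the two natural $\C$-actions on a pair $(\theta,\theta')$ — acting on the first or the second factor — induce the same automorphism of $\theta+\theta'$, which is immediate since both act by adding the same constant function; so the functor is well defined on the contracted product and is an equivalence after stackification.
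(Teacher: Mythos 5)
Your proof is correct and follows the intended (and essentially only natural) route: Poincar\'e lemma for local nonemptiness and connectedness, identification of automorphism sheaves with $\C$ for invertibility, and the summation functor on primitives descending to the contracted product for part~(ii). The paper itself offers no proof, stating only that the result is clear, so your write-up simply supplies the routine verifications the authors omit.
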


Here, for a commutative sheaf of groups $\shg$,
$\stkp\mathop{\times}\limits^{\shg}\stkq$ denotes the contracted
product of two $\shg$-gerbes. This is the stack associated to the
prestack whose objects are pairs $(p,q)$ of an object of $\stkp$ and
an object of $\stkq$, with morphisms
\[
\shHom[\stkp\mathop{\times}\limits^{\shg}\stkq]\bigl((p,q),(p',q')\bigr)
= \shHom[\stkp](p,p')
\mathop{\times}\limits^{\shg}\shHom[\stkq](q,q').
\]

For a principal $\C$-bundle $\rho\colon Y\to X$, denote
\[
T_\lambda\colon Y \to Y, \qquad \infGa = \left.\textstyle{\frac
d{d\lambda}}T_\lambda\right|_{\lambda = 0}\in\Theta_Y
\]
the action of $\lambda\in\C$ and the infinitesimal generator of the
$\Ga$-action, respectively.

\begin{definition}
\label{def:Comega}
  The gerbe $\stkc_\omega$ on $X$ is defined as
  follows. \begin{enumerate} \item Objects on $U\subset X$ are pairs
    $\rho = (V\to[\rho]U,\alpha)$ of a principal $\C$-bundle $\rho$
    and a $1$-form $\alpha\in\sect(V;\Omega^{1}_V)$ such that
    $i_{\infGa} \alpha = 1$ and $\rho^*\omega = d\alpha$. In
    particular, $L_{\infGa}\alpha = 0$.

  \item For another object $\rho' = (V'\to[\rho']U,\alpha')$,
    morphisms $\chi\colon\rho' \to \rho$ are morphisms of principal
    $\Ga$-bundles such that $\chi^*\alpha = \alpha'$.
  \end{enumerate}
\end{definition}

Denote by $p_1\colon X\times\C\to[p_1] X$ the trivial principal $\C$-bundle given by 
the first projection. Let $t$ be the coordinate of $\C$. For a primitive $\theta$ of $\omega$, an object of $\stkc_\omega$ is given by $(p_1,p_1^*\theta+dt)$.
By the next lemma, any object $\rho$ of $\stkc_\omega$ is locally of this form and
any automorphism of
$\rho$ is locally of the form $T_\lambda$, for $\lambda\in\C$.  
(See~\cite[Remark~9.3]{PS04} for similar
observations.)

\begin{lemma}\label{lem:cont}
  There is a natural equivalence $\stkc'_\omega \isoto
  \stkc_\omega$. In particular, $\stkc_\omega$ is an invertible
  $\Ga$-gerbe.
\end{lemma}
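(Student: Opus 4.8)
The plan is to exhibit a functor $\stkc'_\omega \to \stkc_\omega$ on the level of the defining prestacks and check it is an equivalence. On objects, I would send a primitive $\theta$ of $\omega|_U$ to the pair $(p_1\colon U\times\C \to U,\ p_1^*\theta + dt)$, where $t$ is the coordinate on $\C$. This is a legitimate object of $\stkc_\omega(U)$: the vector field $\infGa$ generating the $\C$-action is $\partial_t$, so $i_{\infGa}(p_1^*\theta+dt) = i_{\partial_t}(dt) = 1$, and $d(p_1^*\theta + dt) = p_1^* d\theta = p_1^*\omega$, as required. On morphisms, a morphism $\varphi\colon\theta'\to\theta$ (i.e.\ $d\varphi = \theta'-\theta$) should be sent to the principal-bundle automorphism-type map $\chi_\varphi\colon U\times\C \to U\times\C$, $(x,t)\mapsto (x, t - \varphi(x))$. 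One checks $\chi_\varphi^*(p_1^*\theta + dt) = p_1^*\theta + dt - d\varphi = p_1^*\theta + (\theta'-\theta)^{\text{pulled back}} \cdot{}$; more precisely $\chi_\varphi^* dt = dt - p_1^* d\varphi$, so $\chi_\varphi^*(p_1^*\theta + dt) = p_1^*\theta + dt - p_1^*d\varphi = p_1^*\theta' + dt$, confirming $\chi_\varphi$ is a morphism $(p_1,p_1^*\theta'+dt)\to(p_1,p_1^*\theta+dt)$ in $\stkc_\omega$. Compatibility with composition, namely $\chi_{\varphi+\varphi'} = \chi_\varphi\circ\chi_{\varphi'}$, matches the composition law $\varphi\circ\varphi' = \varphi+\varphi'$ in $\stkc'_\omega$.

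Next I would verify this is an equivalence of prestacks, hence induces an equivalence of the associated stacks. Full faithfulness amounts to showing that every morphism $(p_1,p_1^*\theta'+dt)\to(p_1,p_1^*\theta+dt)$ in $\stkc_\omega$ arises uniquely from such a $\varphi$. A morphism of principal $\C$-bundles over $U$ between two trivial bundles is translation by some function, $\chi(x,t) = (x, t+g(x))$ for $g\in\sect(U;\sho_X)$; imposing $\chi^*\alpha = \alpha'$ forces $p_1^*\theta + dt + p_1^* dg = p_1^*\theta' + dt$, i.e.\ $dg = \theta'-\theta$, so $g = -\varphi$ recovers a unique primitive-morphism. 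This gives a bijection on Hom-sheaves. Essential surjectivity up to local isomorphism is where the substance lies, and it is exactly what the text asserts the lemma will supply: I must show every object $\rho = (V\to U,\alpha)$ of $\stkc_\omega$ is locally isomorphic to one of the form $(p_1, p_1^*\theta+dt)$.

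The main obstacle is this local triviality statement. The argument is: a principal $\C$-bundle is locally trivial, so locally $V\cong U\times\C$ with $\infGa = \partial_t$. On such a chart the conditions $i_{\partial_t}\alpha = 1$ and $L_{\partial_t}\alpha = 0$ mean $\alpha = dt + \beta$ where $\beta$ is a $1$-form with $i_{\partial_t}\beta = 0$ and $L_{\partial_t}\beta = 0$, i.e.\ $\beta$ has no $dt$-component and $t$-independent coefficients; hence $\beta = p_1^*\theta$ for a $1$-form $\theta$ on $U$ (shrinking $U$ if needed). The condition $\rho^*\omega = d\alpha = p_1^* d\theta$ then forces $d\theta = \omega|_U$, so $\theta$ is a primitive and $\alpha = p_1^*\theta + dt$. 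Since primitives of a closed form exist locally by the Poincaré lemma, this also shows $\stkc_\omega$ is locally nonempty, completing the proof that the functor is an equivalence.

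Finally, the concluding assertion that $\stkc_\omega$ is an invertible $\C$-gerbe is immediate from the equivalence together with Lemma~\ref{lem:Cw1w2}(i), since invertibility and the $\C$-gerbe structure transport across an equivalence; concretely, under the equivalence the automorphism $T_\lambda$ of $\rho$ corresponds to the automorphism $\lambda\in\C$ of $\theta$, matching the $\C$-action $\shg = \C_X$ on both sides.
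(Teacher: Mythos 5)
Your proposal follows essentially the same route as the paper: the same functor $\theta\mapsto(p_1,p_1^*\theta+dt)$ on objects, full faithfulness via the observation that a morphism of trivial principal $\C$-bundles over $U$ is translation by a function whose differential is forced to be $\theta'-\theta$, and local essential surjectivity by trivializing the bundle and using $i_{\infGa}(\xi^*\alpha-dt)=L_{\infGa}(\xi^*\alpha-dt)=0$ to descend $\alpha-dt$ to a primitive on the base. The one flaw is a sign slip on morphisms: with the convention $d\varphi=\theta'-\theta$, the map must be $(x,t)\mapsto(x,t+\varphi(x))$, since then $\chi^*(p_1^*\theta+dt)=p_1^*\theta+dt+p_1^*d\varphi=p_1^*\theta'+dt$, whereas your verification $p_1^*\theta+dt-p_1^*d\varphi=p_1^*\theta'+dt$ would require $d\varphi=\theta-\theta'$; this is a trivially fixable inconsistency, not a structural gap.
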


\begin{proof}
  As above, denote by $p_1\colon X\times\C\to[p_1] X$ the first projection and
  by $t$ the coordinate of $\C$. Consider the functor $B\colon
  \stkc'_\omega \to \stkc_\omega$ given by $\theta\mapsto
  (p_1,p_1^*\theta+dt)$ on objects and $\varphi\mapsto \bigl( (x,t)
  \mapsto (x,t+\varphi(x)) \bigr)$ on morphisms.

  As $B$ is clearly faithful, we are left to prove that it is locally
  full and locally essentially surjective. For the latter, let $\rho =
  (V\to[\rho]U,\alpha)$ be an object of $\stkc_\omega(U)$. Up to
  shrinking $U$, we may assume that the bundle $\rho$ is
  trivial. Choose an isomorphism of principal $\Ga$-bundles $\xi\colon
  U\times \Ga \to V$. As $i_{\infGa}(\xi^*\alpha -dt) =
  L_{\infGa}(\xi^*\alpha -dt) = 0$, there exists a unique $1$-form
  $\theta\in\Omega^{1}_X(U)$ such that $\xi^*\alpha -dt =
  p_1^*\theta$. Then $\omega|_U=d\theta$ and $\rho \simeq B(\theta)$.

  It remains to show that any morphism $\chi\colon\rho'\to\rho$ of
  $\stkc_\omega(U)$ is in the image of $B$. Up to shrinking $U$, we
  may assume that $\rho=(p_1,p_1^*\theta+dt)$ and
  $\rho'=(p_1,p_1^*\theta'+dt)$. Then $\chi\colon X\times\C \to
  X\times\C$ is given by $(x,t) \mapsto (x,t+\varphi(x))$ for some
  $\varphi\in\sho_X(U)$. Since $\chi^*(p_1^*\theta+dt) =
  p_1^*\theta'+dt$, it follows that $d\varphi = \theta' -
  \theta$. Hence $\chi = B(\varphi)$.
\end{proof}

\medskip

Let $R$ be a commutative ring endowed with a group homomorphism
$\ell\colon\Ga \to R^\times$.

\begin{definition}\label{def:Rw}
  The stack $R_\omega$ is the invertible $R$-algebroid associated with
  the data $(\stkc_\omega,\Phi_R,\ell)$, where
  \begin{align*}
    \Phi_R(\rho) = R_U, \quad \Phi_R(\chi) = \id_{R_U}, \quad
    \ell_\rho(T_\lambda) = \ell(\lambda),
  \end{align*}
  for $\rho = (V\to[\rho]U,\alpha)$, $\chi\colon \rho'\to\rho$ and
  $\lambda\in\C$.
\end{definition}

Note that by Lemma~\ref{lem:Cw1w2} there is an $R$-linear equivalence
\[
R_{\omega} \tens[R_X] R_{\omega'} \isoto R_{\omega+\omega'}.
\]

\begin{remark}
  Equivalence classes of invertible $\Ga$-gerbes and of invertible
  $R$-algebroids are classified by $H^2(X;\Ga)$ and $H^2(X;R^\times)$,
  respectively. The class of $\stkc_\omega$ coincides with the de~Rham
  class $[\omega]$ of the closed $2$-form $\omega$, and the class of
  $R_\omega$ is the image of $[\omega]$ by
  $\ell\colon H^2(X;\Ga) \to H^2(X;R^\times)$.
\end{remark}

\subsection{Symplectic manifolds}

A complex symplectic manifold $X=(X,\omega)$ is a complex
manifold $X$ of even dimension endowed with a holomorphic closed $2$-form
$\omega\in\sect(X;\Omega^{2}_X)$ which is non-degenerate, i.e.\ the
$n$-fold exterior product $\omega\wedge\cdots\wedge\omega$ never
vanishes for $n = \frac12 \dim X$.

Let $H\colon \Omega_X^{1} \isoto \Theta_X$ be the Hamiltonian
isomorphism induced by the symplectic form $\omega$. The Lie bracket
of $\varphi,\varphi'\in \sho_X$ is given by $\{\varphi,\varphi'\} =
H_\varphi(\varphi')$, where $H_\varphi = H(d\varphi)$ is the
Hamiltonian vector field of $\varphi$.

\begin{example}\label{ex:T*M}
  Let $M$ be a complex manifold. Its cotangent bundle $T^*M$ has a natural
  symplectic structure $(T^*M,d\theta)$, where $\theta$ denotes
  the canonical $1$-form. Let $(x)=(x_1,\dots,x_n)$ be a system of
  local coordinates on $M$.  The associated system $(x;u)$ of local
  symplectic coordinates on $T^*M$ is given by $p = \sum_i u_i(p)
  dx_i$. Then the canonical $1$-form is written $\theta=\sum_i u_i
  dx_i$ and the Hamiltonian vector field of $\varphi\in\sho_M$ is
  written $H_\varphi = \sum_i \bigl( \varphi_{u_i} \partial_{x_i} -
  \varphi_{x_i}
  \partial_{u_i} \bigr)$.
\end{example}

An analytic subset $\Lambda\subset X$ is called involutive if for
any $f,g\in\sho_X$ with $f|_\Lambda = g|_\Lambda = 0$ one has
$\{f,g\}|_\Lambda = 0$. The analytic subset $\Lambda$ is called
Lagrangian if it is involutive and $\dim X = 2 \dim\Lambda$.

Let $X' = (X',\omega')$ be another symplectic manifold. A symplectic
transformation $\psi\colon X'\to X$ is a holomorphic isomorphism such
that $\psi^*\omega = \omega'$.

By Darboux
theorem, for any complex symplectic manifold $X$ there locally exist 
symplectic transformations
\begin{equation}
  \label{eq:DarbS}
  X \supset U\to[\psi] U_M\subset T^*M,
\end{equation}
for a complex manifold $M$  with $\dim M = \frac12\dim X$.

\subsection{Contact manifolds}

Let $\gamma\colon Z\to Y$ be a principal $\Gm$-bundle over a complex
manifold $Y$. Denote by $\infGm$ the infinitesimal generator of the
$\Gm$-action on $Z$. For $k\in\Z$, let $\sho_Z(k)$ be the sheaf of
$k$-homogeneous functions, i.e.~solutions $\varphi\in\sho_Z$ of
$\infGm \varphi = k\varphi$. Let $\sho_Y(k) = \oim\gamma\sho_Z(k)$ be
the corresponding invertible $\sho_Y$-module, so that $\sho_Y(-1)$ is
the sheaf of sections of the line bundle
$\C\mathop{\times}\nolimits^{\Gm}Z$.

A complex contact manifold $Y = (Z\to[\gamma]Y,\theta)$ is a complex
manifold $Y$ endowed with a principal $\Gm$-bundle $\gamma$ and a
holomorphic $1$-form $\theta\in\sect(Z;\Omega^{1}_Z)$ such that $(Z,
d\theta)$ is a complex symplectic manifold, $i_{\infGm} \theta = 0$
and $L_{\infGm} \theta = \theta$, i.e.~$\theta$ is $1$-homogeneous.

\begin{example}
  Let $M$ be a complex manifold and $\theta$ the canonical $1$-form on
  $T^*M$ as in Example~\ref{ex:T*M}. The projective cotangent bundle
  $P^*M$ has a natural contact structure $(\gamma,\theta)$ with
  $\gamma\colon T^*M \setminus M \to P^*M$ the projection. Here
  $T^*M\setminus M$ denotes the cotangent bundle with the zero-section
  removed.
\end{example}

Note that the $1$-form $\theta$ on $Z$ may be considered as a global section of
$\Omega_Y^{1}\tens[\sho]\sho_Y(1)$. In particular, there is an embedding
\begin{equation}\label{eq:OY-1}
  \iota\colon\sho_Y(-1) \to \Omega_Y^{1}, \quad \varphi\mapsto \varphi\theta.
\end{equation}
Note also that the symplectic manifold $Z$ is homogeneous with respect
to the $\Gm$-action, i.e.~$\theta = i_{\infGm}(d \theta)$.  Moreover,
there exists a unique $\Gm$-equivariant embedding $Z \hookrightarrow
T^*Y$ such that $\theta$ is the pull-back of the canonical $1$-form on
$T^*Y$.

Since $d\theta$ is $1$-homogeneous, the Hamiltonian vector field
$H_\varphi$ of $\varphi\in \sho_Z(k)$ is $(k-1)$-homogeneous,
i.e.~$[\infGm,H_\varphi] = (k-1)\, H_\varphi$.

An analytic subset $\Gamma$ of $Y$ is called involutive (resp.\ Lagrangian) 
if $\opb\gamma\Gamma$ is involutive (resp.\ Lagrangian) 
in $Z$.

Let $Y' = (Z'\to[\gamma']Y',\theta')$ be another contact manifold. A
contact transformation $\chi\colon Y'\to Y$ is an isomorphism of
principal $\Gm$-bundles
\[
\xymatrix{
  Z' \ar[r]^{\widetilde\chi} \ar[d]^{\gamma'} & Z \ar[d]^\gamma \\
  Y' \ar[r]^\chi & Y }
\]
such that $\widetilde\chi^*\theta = \theta'$.

By Darboux
theorem, for any complex contact manifold $Y$ there locally exist contact transformations
\begin{equation}
  \label{eq:DarbC}
  Y \supset V\to[\chi]  V_M\subset P^*M,
\end{equation}
for a complex manifold $M$ with $\dim M = \frac12(\dim Y + 1)$.

\subsection{Contactifications}\label{sse:contsymp}

Let $X = (X,\omega)$ be a complex symplectic manifold.
A contactification of $X$
is a global object of the stack $\stkc_\omega$ described in Definition~\ref{def:Comega}.
Morphisms of contactifications are morphisms in
$\stkc_\omega$.

For a contactification $\rho=(Y\to[\rho]X,\alpha)$ of $X$,
the total space $Y$ of $\rho$ has a natural complex contact structure given by
$(Y\times\C^\times\to[q_1] Y,\tau\,q_1^*\alpha)$, where $q_1$ is the first
projection and $\tau\in\C^\times$.
Note that, in terms of contact structures, a morphism $\rho'\to\rho$ of contactifications is a contact transformation $\chi\colon Y'\to Y$ over $X$.

\begin{example}\label{ex:PMC}
Let $M$ be a complex manifold and denote by $(t;\tau)$ the
symplectic coordinates of $T^*\C$. Consider the principal $\Ga$-bundle
\[
P^*(M\times\C) \supset \{\tau\neq 0\}\to[\rho] T^*M,
\quad (x,t;\xi,\tau) \mapsto
(x;\xi/\tau),
\]
with the $\Ga$-action given by
translation in the $t$ variable.  
Note that
the bundle $\rho$ is trivialized by
\[
\chi\colon \{\tau\neq 0\} \isoto (T^*M) \times \C, \quad
(x,t;\xi,\tau) \mapsto ((x;\xi/\tau),t).
\]
Consider the projection $p_1\colon (T^*M) \times \C \to T^*M$.

As in Example~\ref{ex:T*M}, denote by $\theta$ the canonical $1$-form of $T^*M$. Then a contactification of
$(T^*M,d\theta)$ is given by $(\rho,\alpha)$, with $\rho$ as above and $\alpha = \chi^*(p_1^*\theta+dt)$. In a system $(x;u)$ of local symplectic coordinates on $T^*X$, one has $\theta = u\,dx$ and $\alpha = (\xi/\tau)dx + dt$.  
As the canonical $1$-form of $T^*(M\times\C)$ is $\tau\alpha = \xi\,dx+\tau\,dt$, the map \eqref{eq:OY-1} is given by
\[
\iota\colon\sho_{P^*(M\times\C)}(-1)|_{\{\tau\neq 0\}} \to \Omega_{P^*(M\times\C)}^{1}|_{\{\tau\neq 0\}}, \quad \varphi\mapsto \varphi\,\tau\alpha.
\]
\end{example}

\subsection{Contactification of Lagrangian subvarieties}

In this section we show how any Lagrangian subvariety of a complex
symplectic manifold lifts to a contactification (see
e.g.~\cite[Lemma~8.4]{DS07} for the case of Lagrangian submanifolds).

Let us begin with a preliminary lemma.

\begin{lemma}\label{lem:f}
  Let $M$ be a complex manifold, $S\subset M$ a closed analytic subset
  and $\theta\in\Omega^{1}_M$ a $1$-form such that
  $d\theta|_{S_\reg}=0$. Then there locally exists a continuous function $f$,
  on $S$ such that $f$ is holomorphic on the non-singular locus $S_\reg$,
  and $df|_{S_\reg} = \theta|_{S_\reg}$.
\end{lemma}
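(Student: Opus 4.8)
The plan is to resolve the singularities of $S$, integrate the pulled-back form on the resulting smooth space, and then push the primitive back down to $S$; the underlying principle is that on a complex manifold a closed holomorphic $1$-form whose de~Rham class vanishes is the differential of a holomorphic function. Since the assertion is local, I would fix $p\in S$ and choose a proper surjective morphism $\pi\colon \tilde{S}\to S$ with $\tilde{S}$ smooth and $\pi$ an isomorphism over $S_\reg$ (a resolution of singularities). Writing $j\colon \tilde{S}\to S\hookrightarrow M$ for the composition, I would set $\eta = j^*\theta$, a holomorphic $1$-form on $\tilde{S}$.

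First I would verify that $\eta$ is closed: indeed $d\eta = j^*(d\theta)$ vanishes on the dense open subset $\pi^{-1}(S_\reg)$ by hypothesis, hence vanishes identically by holomorphicity. The key observation is then that, for every $s\in S$, the whole fibre $\pi^{-1}(s)$ is sent by $j$ to the single point $s$, so that $\eta$ restricts to $0$ on each fibre of $\pi$. Shrinking $S$ to a small neighbourhood $U$ of $p$, standard results give a deformation retraction of the preimage $\tilde{U}=\pi^{-1}(U)$ onto the central fibre $F=\pi^{-1}(p)$; since the form $\eta$ restricts to $0$ on $F$, its class vanishes under the restriction isomorphism $H^1(\tilde{U};\C)\isoto H^1(F;\C)$, so $[\eta]=0$ and $\eta=dg$ for a holomorphic function $g$ on $\tilde{U}$ (the primitive is holomorphic because $\eta$ is of type $(1,0)$, forcing $\bar\partial g=0$).

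It remains to push $g$ down to a continuous function on $S$. Since $dg=\eta$ vanishes along every fibre of $\pi$, the function $g$ is constant along the connected components of the fibres; granting that it takes a single value on each whole fibre, it descends to a continuous function $f$ on $U$ with $g=f\circ\pi$, and over $S_\reg$, where $\pi$ is a biholomorphism, $f$ is holomorphic and satisfies $df|_{S_\reg}=\theta|_{S_\reg}$. I expect the main obstacle to be exactly this last descent across the singular locus $S\setminus S_\reg$: the primitive is manufactured only on $S_\reg$, and what must be controlled is that its limiting values along the various local branches of $S$ through a singular point coincide, so that $f$ extends continuously there. This is precisely the point at which the connectedness of the fibres of $\pi$ — equivalently, the local structure of $S$ near its singularities — has to be exploited, and it is the delicate step on which the whole argument hinges.
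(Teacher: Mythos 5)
Your set-up coincides with the paper's: resolve the singularities of $S$, pull $\theta$ back, observe that the pullback is closed because $\opb{p}(S_\reg)$ is dense in the smooth resolution, and produce a holomorphic primitive on a neighbourhood of a fixed fibre because the pulled-back form vanishes on that fibre. Up to that point your argument is correct and is essentially the first half of the proof in the paper.

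The gap is the descent, which you explicitly leave as a ``granting that'': you never prove that the primitive takes a single value on each fibre of the resolution, and the mechanism you point to --- connectedness of the fibres --- fails in general. Already for $S=\{xy=0\}\subset\C^2$ the resolution is the disjoint union of the two axes and the fibre over the origin consists of two points; whenever $S$ is reducible or non-normal the fibres are disconnected, so constancy of the primitive on connected components of fibres says nothing about the agreement of its limits along the different local branches of $S$ at a singular point, which is exactly what continuity of $f$ requires. The paper closes this as follows: it normalizes the primitive $f'$ to vanish on the \emph{whole} central fibre $S'_0=\opb{p}(s_0)$ (not merely at one point of it), then passes to the fibre product $S''=S'\times_S S'$ and shows that $g=p_1^*f'-p_2^*f'$ vanishes identically. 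Since $p\circ p_1=p\circ p_2$, the function $g$ has vanishing differential on $S''_{\reg}$, hence is locally constant there; a separate topological sublemma (a continuous function on $T$ that is locally constant on a dense open $U\subset T$ is locally constant on $T$, provided $T$ has a basis of connected open sets meeting $U$ in finitely many components) upgrades this to local constancy on all of $S''$; and since $g$ vanishes on $S''_0$, which after shrinking meets every connected component of $S''$, it vanishes everywhere. This fibre-product argument, or something equivalent to it, is the missing content of your proof; note also that even the weaker claim you do make --- constancy of $g$ on a connected but possibly singular fibre --- already needs the same kind of ``dense open'' sublemma, since $dg=\eta$ is only known to vanish on the regular part of the fibre.
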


\begin{proof}
  Let $S'\to S$ be a resolution of singularities and let $p\colon S'
  \to M$ be the composite $S'\to S \hookrightarrow M$. Thus $S'$ is a
  complex manifold, $p$ is proper and $\opb p (S_\reg) \to S_\reg$ is
  an isomorphism. Consider the global section $\theta' = p^*\theta$ of
  $\Omega_{S'}^{1}$. As $d\theta|_{S_\reg}=0$ and $\opb p (S_\reg)$ is
  dense in $S'$, we have $d\theta'=0$.

  Fix a point $s_0\in S$ and set $S'_0 = \opb p(s_0)$.
Since $\theta'|_{(S'_0)_\reg} = 0$, there exists a unique 
holomorphic function $f'$ defined on a neighborhood of $S'_0$
such that $df' =  \theta'$ and $f'\vert_{S'_0}=0$. As $p$ is proper, replacing $M$ by a neighborhood of $s_0$
  we may assume that $f'$ is globally defined on $S'$.

  Set $S''=S'\times_S S'$ and $S''_0 = S'_0\times_S S'_0$. We may
  assume that $S''_0$ intersects each connected component of
  $S''$. Consider the diagram
  \[
  \xymatrix{ S''_\reg \ar[r]^q & S'' \ar@<-.7ex>[r]_{p_2}
    \ar@<+.7ex>[r]^{p_1} & S' \ar[r]^-p & M, }
  \]
  where $p_1$ and $p_2$ are the projections $S'\times_S S' \to S'$. To
  conclude, it is enough to prove that $g= p_1^*f' - p_2^*f'$
  vanishes, for then we can set $f(w) = f'(w')$ with $p(w') = w$.

  Since $pp_1 = pp_2$, one has $d q^*g = d(pp_1q)^*\theta -
  d(pp_2q)^*\theta = 0$ so that $g$ is locally constant on
  $S''_\reg$. Hence $g$ is locally constant by
  Sublemma~\ref{lem:loccst} below with $T=S''$ and $U=S''_\reg$.
  Since $g$ vanishes on $S''_0$, it vanishes everywhere.
\end{proof}

\begin{sublemma}\label{lem:loccst}
  Let $T$ be a Hausdorff topological space and $U\subset T$ a dense
  open subset.  Assume there exists a basis $\shb$ of open subsets of
  $T$ such that any $B\in\shb$ is connected and $B\cap U$ has finitely
  many connected components.  If a continuous function on $T$ is
  locally constant on $U$, then it is locally constant on $T$.
\end{sublemma}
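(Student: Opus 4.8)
The plan is to argue locally and exploit the basis $\shb$ to reduce everything to a single connected basic open set. Let $g$ be a continuous function on $T$ that is locally constant on $U$. Since local constancy is a local property and $\shb$ is a basis, it suffices to show that $g$ is constant on each $B\in\shb$.

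Fix such a $B$, which is connected by hypothesis. First I would note that $B\cap U$ is dense in $B$: as $U$ is dense in $T$ and $B$ is open, any nonempty open subset of $B$ is a nonempty open subset of $T$, hence meets $U$, hence meets $B\cap U$. By assumption $B\cap U$ has finitely many connected components $U_1,\dots,U_k$. Each $U_i$ is connected and contained in $U$, and $g|_U$ is locally constant; a locally constant function on a connected set is constant, so $g|_{U_i}=c_i$ for some scalar $c_i$. By continuity of $g$ on $T$, it follows that $g=c_i$ on the closure $\overline{U_i}$.

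Next I would use density to pin down the values of $g$ on all of $B$. Since $B\cap U=U_1\cup\dots\cup U_k$ is dense in $B$ and the union is finite, one has $B=\overline{B\cap U}\cap B=(\overline{U_1}\cup\dots\cup\overline{U_k})\cap B$. Thus every point of $B$ lies in some $\overline{U_i}$, whence $g(B)\subseteq\{c_1,\dots,c_k\}$; in particular $g$ takes only finitely many values on $B$. Finally I would invoke connectedness: as $g$ is continuous with values in $\C$ (where points are closed), each level set $g^{-1}(c)\cap B$, $c\in g(B)$, is closed in $B$, and these finitely many sets partition $B$. A finite partition of $B$ into closed sets is simultaneously a partition into open sets, so each level set is clopen in $B$; since $B$ is connected and nonempty there can be only one, and $g$ is constant on $B$.

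The argument is essentially routine point-set topology, so I do not expect a serious obstacle; the only points requiring care are the density of $B\cap U$ in $B$ and the remark that finitely many connected components force $g$ to assume only finitely many values on $B$. Once those are in hand, the connectedness of $B$ gives the conclusion immediately.
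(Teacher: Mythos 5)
Your proof is correct. The paper states Sublemma~\ref{lem:loccst} without any proof, and your argument --- constancy on each of the finitely many connected components $U_i$ of $B\cap U$, propagation of the values $c_i$ to $\overline{U_i}$ by continuity, the observation that $B=(\overline{U_1}\cup\dots\cup\overline{U_k})\cap B$ by density of $B\cap U$ in $B$, and the final clopen-partition argument on the connected set $B$ --- is the standard one that fills this gap; the only implicit hypothesis you use is that points are closed in the target of the function (true for $\C$-valued functions, which is the case needed in Lemma~\ref{lem:f}).
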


Let now $X = (X,\omega)$ be a complex symplectic manifold.

\begin{proposition}\label{pro:Lambda}
  Let $\Lambda$ be a Lagrangian subvariety of $X$. Then there exist a
  neighborhood $U$ of $\Lambda$ in $X$ and a pair $(\rho,\Gamma)$ with
  $\rho\colon V\to U$ a contactification and $\Gamma$ a Lagrangian
  subvariety of $V$ such that $\rho|_\Gamma$ is a homeomorphism over
  $\Lambda$ and a holomorphic isomorphism over $\Lambda_\reg$.
\end{proposition}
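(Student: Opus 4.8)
The plan is to reduce the global statement to a local construction and then patch. First I would work locally near a point of $\Lambda$. By the Darboux theorem \eqref{eq:DarbS}, after shrinking we may assume $X=T^*M$ with its standard symplectic form $\omega=d\theta$, and $\Lambda\subset T^*M$ a Lagrangian subvariety. The model contactification to use is the one of Example~\ref{ex:PMC}, namely $\rho\colon\{\tau\neq 0\}\subset P^*(M\times\C)\to T^*M$ with $\alpha=\chi^*(p_1^*\theta+dt)$. The fibers of $\rho$ are the $\C$-orbits of translation in $t$, so lifting $\Lambda$ to a Lagrangian $\Gamma\subset V$ mapping homeomorphically onto $\Lambda$ amounts to choosing, over each point of $\Lambda$, a single value of $t$; equivalently, to choosing a section $f$ of $\rho$ over $\Lambda$. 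Writing $\Gamma=\setof{(p,f(p))}{p\in\Lambda}$, the condition that $\Gamma$ be Lagrangian (equivalently, that $\opb\gamma\Gamma$ be isotropic of the right dimension in the symplectification) will translate into the differential equation $df=\theta$ along $\Lambda_\reg$.

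The key step is therefore to produce such an $f$, and this is exactly the content of Lemma~\ref{lem:f}. Since $\Lambda$ is Lagrangian, on the smooth locus $\Lambda_\reg$ the pullback of the symplectic form vanishes, so $d(\theta|_{\Lambda_\reg})=\omega|_{\Lambda_\reg}=0$, and Lemma~\ref{lem:f} (applied with $M$, $S=\Lambda$, and the $1$-form $\theta$) yields locally a continuous function $f$ on $\Lambda$, holomorphic on $\Lambda_\reg$, with $df|_{\Lambda_\reg}=\theta|_{\Lambda_\reg}$. I would then set $\Gamma$ to be the graph of $f$ inside the trivialization $\{\tau\neq 0\}\simeq (T^*M)\times\C$, i.e.\ $\Gamma=\setof{(p,f(p))}{p\in\Lambda}$. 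By construction $\rho|_\Gamma$ is a homeomorphism onto $\Lambda$ (it is the inverse of $p\mapsto(p,f(p))$) and a holomorphic isomorphism over $\Lambda_\reg$. One must then verify that $\Gamma$ is Lagrangian: on $\Gamma_\reg$, the pullback of the contact form $\alpha=p_1^*\theta+dt$ along the graph map $p\mapsto(p,f(p))$ is $\theta|_{\Lambda_\reg}-df|_{\Lambda_\reg}=0$, which combined with the dimension count $\dim\Gamma=\dim\Lambda=\frac12\dim X=\frac12(\dim Y-1)$ gives that $\opb\gamma\Gamma$ is Lagrangian in the symplectification of $Y$.

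The main obstacle is \emph{uniqueness and patching}, not the local existence. Lemma~\ref{lem:f} only produces $f$ locally and only up to a locally constant function on $\Lambda$ (the primitive of a closed form is unique up to constants, and $\Lambda$ may be reducible or have several sheets meeting along the singular locus). To globalize I would argue that the difference of two local lifts $\Gamma$, $\Gamma'$ over an overlap corresponds to a difference $f-f'$ which is holomorphic on $\Lambda_\reg$ with vanishing differential there, hence locally constant on $\Lambda_\reg$; the topological input of Sublemma~\ref{lem:loccst} (already invoked in the proof of Lemma~\ref{lem:f}) then upgrades this to locally constant on all of $\Lambda$. The freedom in these locally constant functions is precisely the ambiguity of the contactification-automorphisms $T_\lambda$ for $\lambda\in\C$, so the local graphs are compatible as subvarieties of the \emph{stack} $\stkc_\omega$ once one chooses the contactification $\rho$ consistently. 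I would thus first fix a contactification $\rho\colon V\to U$ on a neighborhood $U$ of $\Lambda$ (possible after shrinking, since $\stkc_\omega$ is a gerbe and its local objects are classified by primitives of $\omega$), and then glue the local graphs $\Gamma$; the locally-constant discrepancies are absorbed by the $T_\lambda$-action, yielding a single global Lagrangian $\Gamma\subset V$ with the required properties. The delicate point throughout is controlling behavior across $\Lambda_\mathrm{sing}$, where continuity (rather than holomorphy) of $f$ and the connectedness hypotheses of Sublemma~\ref{lem:loccst} are what make the gluing of sheets coherent.
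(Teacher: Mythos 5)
Your local analysis coincides with the paper's: choose a primitive $\theta_i$ of $\omega$, apply Lemma~\ref{lem:f} to produce a continuous $f_i$ on $\Lambda_i$ with $df_i=\theta_i$ on $\Lambda_{i,\reg}$, and take for $\Gamma_i$ the graph $\{t+f_i=0\}$ in $U_i\times\C$ (note a sign slip: with your graph $p\mapsto(p,f(p))$ the pullback of $p_1^*\theta+dt$ is $\theta+df$, not $\theta-df$, so you want the graph of $-f$; this is cosmetic). The genuine gap is in the globalization. You propose to first fix a contactification $\rho\colon V\to U$ of a neighborhood of $\Lambda$ and then glue the local graphs, absorbing the locally constant discrepancies into the $T_\lambda$-action. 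Neither step is justified. First, $\stkc_\omega$ being a gerbe only provides local objects; a global object on $U$ exists if and only if the de~Rham class $[\omega]|_U$ vanishes, and shrinking $U$ around a possibly singular $\Lambda$ does not obviously achieve this. Second, and more seriously, even when contactifications of $U$ exist they are not unique (their isomorphism classes form a torsor under $H^1(U;\C)$), and for a wrongly chosen $\rho$ no global Lagrangian lift exists at all: the discrepancies $f_i-f_j$ form a \v{C}ech $1$-cocycle with values in locally constant functions on $\Lambda$, and its class in $H^1(\Lambda;\C)$ is an obstruction which the $T_\lambda$-action (a single global translation) cannot remove. Concretely, take $X=T^*\C^\times$, $\Lambda=\{u=1/x\}$ and the trivial contactification with $\alpha=u\,dx+dt$: a global lift would require a global primitive of $dx/x$ on $\Lambda\simeq\C^\times$, which does not exist; one must instead use the contactification twisted by the flat bundle with transition constants in $2\pi i\Z$.

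The paper resolves both difficulties at once by constructing $\rho$ and $\Gamma$ \emph{together}: the transition functions $\varphi_{ij}$ of the contactification are chosen so that $d\varphi_{ij}=\theta_i-\theta_j$ \emph{and} $\varphi_{ij}|_{\Lambda_{ij,\reg}}=f_i-f_j$, which makes the local graphs glue on the nose; the cocycle condition $\varphi_{ij}+\varphi_{jk}+\varphi_{ki}=0$ (hence the very existence of $\rho$ near $\Lambda$) then follows because this sum is locally constant and vanishes on $\Lambda_{ijk}$, after shrinking so that $\Lambda_{ijk}$ meets every connected component of $U_{ijk}$. In other words, the contactification is not an input to be fixed in advance; it is determined by $\Lambda$, and establishing its existence is part of what the proposition asserts. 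Your argument as written does not establish this, and the "first fix $\rho$, then glue" order of operations would fail on the example above.
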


\begin{proof}
  Let $\{U_i\}_{i\in I}$ be an open cover of $\Lambda$ in $X$ such
  that for each $i\in I$ there is a primitive
  $\theta_i\in\Omega_X^{1}(U_i)$ of $\omega|_{U_i}$.  Set $\Lambda_i =
  \Lambda \cap U_i$. Using Lemma~\ref{lem:f}, up to shrinking the
  cover we may assume that there is a continuous function $f_i$ on
  $\Lambda_i$ such that $f_i|_{\Lambda_{i,\reg}}$ is a primitive of
  $\theta_i|_{\Lambda_{i,\reg}}$. Set $U_{ij} = U_i\cap U_j$ and
  similarly for $\Lambda_{ij}$. Up to further shrinking the cover we
  may assume that $\Lambda_{ij}$ intersects each connected component
  of $U_{ij}$ and there is a function $\varphi_{ij}\in \sho_X(U_{ij})$
  such that $d\varphi_{ij} = \theta_i - \theta_j|_{U_{ij}}$ and
  $\varphi_{ij}|_{\Lambda_{ij,\reg}} = f_i -
  f_j|_{\Lambda_{ij,\reg}}$. Set $U_{ijk} = U_i\cap U_j\cap U_k$ and
  similarly for $\Lambda_{ijk}$. Note that $d(\varphi_{ij}
  +\varphi_{jk} +\varphi_{ki}) = 0$, so that $\varphi_{ij}
  +\varphi_{jk} +\varphi_{ki}$ is locally constant on $U_{ijk}$. Since
  it vanishes on $\Lambda_{ijk}$, it vanishes everywhere.

  Set $\rho_i = (V_i\to[p_1]U_i,\alpha_i)$, where $V_i = U_i \times
  \C$ and $\alpha_i = p_1^*\theta_i + dt$. Let $(\rho_i,\Gamma_i)$ be
  the pair with
  \[
  \Gamma_i = \{(x,t)\in V_i \setdef x\in \Lambda_i,\ t+f_i(x) = 0\}.
  \]
  Then the pair $(\rho,\Gamma)$ is obtained by patching the
  $(\rho_i,\Gamma_i)$'s via the maps $(x,t) \mapsto
  (x,t+\varphi_{ij}(x))$.
\end{proof}

Let us give an example that shows how, in general, $\Gamma$ and
$\Lambda$ are not isomorphic as complex spaces.

\begin{example}
  Let $X=(T^*\C,d\theta)$ with symplectic coordinates $(x;u)$, and $Y=
  (X \times \C,\alpha)$ with extra coordinate $t$. Then $\theta = u\,
  dx$ and $\alpha = u\, dx+dt$. Take as $\Lambda\subset X$ a
  parametric curve $\Lambda = \{(x(s),u(s))\setdef s\in\C \}$, with
  $x(0) = u(0) = 0$. Then
  \[
  \Gamma = \{(x,u,t)\setdef x=x(s),\ u=u(s),\ t+f(s)=0 \},
  \]
  where $f$ satisfies the equations $f'(s) = u(s)x'(s)$ and
  $f(0)=0$. For
  \[
  x(s) = s^3,\quad u(s) = s^7 + s^8,\quad f(s) = \tfrac3{10}s^{10} +
  \tfrac3{11}s^{11},
  \]
  we have an example where $f$ cannot be written as an analytic
  function of $(x,u)$. In fact, $s^{11} =11x(s)u(s) -
  \frac{110}3 f(s)$ and $s^{11} \notin
  \C[\mspace{-1mu}[s^3,s^7+s^8]\mspace{-1mu}]$.
\end{example}

\section{Holonomic modules on symplectic manifolds}\label{se:contact}

We start by giving here a construction of the microdifferential
algebroid of~\cite{Kas96} in terms of algebroid data and by recalling
some results on regular holonomic microdifferential modules. Then,
using the results from the previous section, we show how it is
possible to associate to a complex symplectic manifold a natural
$\C$-linear category of holonomic modules.

\subsection{Microdifferential algebras}\label{sse:EM}

Let us review some notions from the theory of microdifferential
operators (refer to \cite{S-K-K,Kas03}).

Let $M$ be a complex manifold. Denote by $\she_M$ the sheaf on $P^*M$
of microdifferential operators, and by $\filt[k]{\she_M}$ its subsheaf of operators of order at most
$k\in\Z$. Then $\she_M$ is a sheaf of $\C$-algebras on $P^*M$, filtered over $\Z$ by
the $\filt[k]{\she_M}$'s. 

Take a local symplectic coordinate system $(x;\xi)$ on $T^*M$. For an open subset $U\subset T^*M$, a section
$a\in\sect(U;\filt[k]{\she_M})$ is represented by its total symbol, which is
a formal series
\[
a(x,\xi)=\sum_{j\leq k} a_j(x,\xi), \qquad a_j\in\sect(U;\sho_{P^*M}(j))
\]
satisfying suitable growth conditions. 
In terms of total symbols, the product in $\she_M$ is given by Leibniz rule.
More precisely, for
$a'\in\she_M$ with total symbol $a'(x,\xi)$, the product $aa'$ has
total symbol
\[
\sum_{\multiindex\in \N^n} \frac{1}{\multiindex
    !} \partial^{\multiindex}_\xi a(x,\xi)
  \partial^{\multiindex}_x a'(x,\xi).
\]

For $a\in\filt[k]{\she_M}$, the top degree component
$a_k\in\sho_{P^*M}(k)$ of its total symbol does not depend of the choice of coordinates.
The map
\[
\sigma_k \colon \filt[k]{\she_M} \to
\sho_{P^*M}(k), \quad a\mapsto a_k
\]
induced by the isomorphism $\filt[k]{\she_M}/\filt[k-1]{\she_M} \simeq
\sho_{P^*M}(k)$ is called the symbol map. 
Recall that an operator $a\in\filt[k]{\she_M}\setminus\filt[k-1]{\she_M}$ is invertible at $p\in P^*M$ if and only if $\sigma_k(a)(p) \neq 0$.

For $a\in\filt[k]{\she_M}$
and $a'\in\filt[k']{\she_M}$, one has
\[
\{\sigma_k(a), \sigma_{k'}(a')\} = \sigma_{k+k'-1}([a,a']).
\]

An anti-involution of $\she_M$ is an
isomorphism of $\C$-algebras $*\colon \she_M\to\she_M^\op$ such that
$** = \id$.

\begin{remark}\label{rem:Eloc}
   In a local system of symplectic coordinates, an example of
  anti-involution $*$ of $\she_M$ is given by the formal adjoint.
This is  described at the level of total symbols by
\[
a^*(x,\xi) =
  \sum_{\multiindex\in
    \N^n}\frac{1}{\multiindex!}\partial_\xi^\multiindex\partial_x^\multiindex
  \bigl(a(x,-\xi)\bigr). 
\]
The formal adjoint depends on the choice
  of the top-degree form $dx_1\wedge\cdots\wedge dx_n$.
\end{remark}

Consider a contact transformation
\[
P^*M' \supset V' \to[\chi] V \subset P^*M
\]
where $M,M'$ are complex manifolds with the same dimension.
It is a fundamental result of \cite{S-K-K} that quantized contact transformations
can be locally quantized. 

\begin{theorem}
\label{thm:qchi}
With the above notations:
\begin{itemize}
\item[(i)]
Any $\C$-algebra isomorphism $f\colon \chi_*\she_{M'}|_V \isoto \she_M|_V$ is a filtered isomorphism, and $\sigma_k(f(a')) = \chi_*\sigma_k(a')$ for any
$a'\in\filt[k]{\she_{M'}}$.
\item[(ii)]
For any $p\in V$ there exists a neighborhood $U$ of $p$ in $V$
and a $\C$-algebra isomorphism
$f\colon \chi_*\she_{M'}|_U \isoto \she_M|_U$.
\item[(iii)]
Let $*$ and $*'$ be anti-involutions of $\she_M|_V$ and $\she_{M'}|_{V'}$, respectively.
For any $p\in V$ there exists a neighborhood $U$ of $p$ in $V$
and a $\C$-algebra isomorphism $f$ as in (ii) such that $f*' = * f$.
\end{itemize}
\end{theorem}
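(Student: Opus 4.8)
We prove the three assertions in the order (i), (ii), (iii), since the rigidity statement (i) is used in both existence statements.

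\emph{Part (i).} The plan is to first show that $f$ preserves the order filtration and then that it is compatible with symbols through $\chi$. Filtration preservation is the rigidity core. Invertibility of a germ of operator at a point is a purely algebraic condition (being a unit of the stalk), and since $f$ is an isomorphism of sheaves of $\C$-algebras over $V$ it carries units to units stalkwise; combined with the intrinsic description of the subsheaves $\filt[k]{\she_M}$ in terms of the algebra structure recalled in \cite{S-K-K}, this forces $f\bigl(\filt[k]{\she_{M'}}\bigr) = \filt[k]{\she_M}$ for all $k$. Granting this, $f$ induces an isomorphism of the associated graded sheaves $\gr\she_{M'}\isoto\gr\she_M$, i.e.\ of the graded Poisson algebras $\bigoplus_k\sho_{P^*M}(k)$. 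Because $f$ is a morphism of sheaves over $V$, its induced map of structure sheaves (the degree-zero part) is the comorphism of $\chi$, so the graded isomorphism is a morphism of graded Poisson algebras covering $\chi$ and must therefore equal $\chi_*$. This is exactly the assertion $\sigma_k(f(a')) = \chi_*\sigma_k(a')$.

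\emph{Part (ii).} This existence statement is the genuine content of \cite{S-K-K} and I expect it to be \textbf{the main obstacle}. The strategy is to decompose an arbitrary homogeneous contact transformation into \emph{elementary} pieces that can be quantized explicitly: transformations induced by a biholomorphism of the base $M$ (quantized by the induced action on total symbols) and the model partial Legendre transformation exchanging a coordinate $x_i$ with its conjugate $\xi_i$ (quantized by an explicit order-preserving ring isomorphism). After composing with such base changes, the residual transformation is isotopic to the identity and can be quantized by integrating a Hamiltonian flow: one chooses a time-dependent family $P_s$ of order-one operators whose principal symbols generate the isotopy and solves
\[
\frac{d}{ds} f_s = \ad(P_s)\comp f_s, \qquad f_0 = \id,
\]
so that the time-one map $f = f_1$ is the desired isomorphism on a neighborhood $U$ of $p$. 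The hard points are the explicit quantization of the model transformation and the convergence of the symbolic series defining the flow.

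\emph{Part (iii).} I would deduce this from (i) and (ii) by a correction argument. Start from $f_0$ as in (ii) and set $\tilde f = *\comp f_0\comp *'$. As $*$ and $*'$ are anti-involutions, $\tilde f$ is again a $\C$-algebra isomorphism $\chi_*\she_{M'}|_U\isoto\she_M|_U$, and by (i) it has the same symbols as $f_0$. Hence $g = \tilde f\comp f_0^{-1}$ is an automorphism of $\she_M|_U$ inducing the identity on $\gr\she_M$; such automorphisms are inner, so $g = \Ad(b)$ with $b$ invertible of order $0$ and $\sigma_0(b) = 1$. Using $*\comp\Ad(c) = \Ad\bigl((c^*)^{-1}\bigr)\comp *$, one checks that the corrected isomorphism $f = \Ad(c)\comp f_0$ satisfies $f*' = *f$ exactly when $c^*c = b$; applying $**=\id$ and $*'*'=\id$ shows that $b$ is self-adjoint with $\sigma_0(b)=1$. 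It then remains to find a self-adjoint square root, i.e.\ an invertible $c = 1 + (\text{order}\le -1)$ with $c^*c = b$, which I would construct by successive approximation along the order filtration, the iteration converging because each step strictly lowers the order. Shrinking $U$ if necessary yields the required $f$, completing the plan.
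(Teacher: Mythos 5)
You should first be aware that the paper does not prove Theorem~\ref{thm:qchi} at all: it is imported as ``a fundamental result of \cite{S-K-K}'', with part (iii) credited to \cite{Kas96}, so there is no in-paper proof to measure your attempt against. Judged on its own terms, your outline is sound where the argument is standard and incomplete exactly where the theorem is hard. For (i), the rigidity does rest on an intrinsic, purely ring-theoretic characterization of the order filtration, which you invoke rather than establish; that is defensible (the paper itself asserts without proof that ``any $\C$-algebra automorphism of $\she_M$ is filtered and symbol preserving''), but you should flag that identifying the induced graded isomorphism with $\chi_*$ in degrees $k\neq 0$ is not a consequence of the degree-zero comorphism argument alone: one needs the Poisson bracket, e.g.\ that $\sigma_{k-1}([a',c'])=\{\sigma_k(a'),\sigma_0(c')\}$, so that the Hamiltonian field of a $k$-homogeneous symbol acting on $\sho_{P^*M}(0)$ pins the symbol down (homogeneity killing the additive constant when $k\neq 0$). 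Your part (iii) is essentially Kashiwara's argument from \cite{Kas96}: $*\comp f_0\comp *'\comp f_0^{-1}=\Ad(b)$ with $b^*=b$ and $\sigma_0(b)=1$, and the self-adjoint square root $c=b^{1/2}$ obtained from the binomial series (whose convergence is the same symbol estimate that gives Neumann-series inverses) yields the correction $\Ad(c)\comp f_0$. This is correct, including the reduction of the condition $f*'=*f$ to $c^*c=b$ up to a central scalar fixed by the normalization $\sigma_0(c)=1$.

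The genuine gap is (ii), which you candidly leave as a programme: both the explicit quantization of the partial Legendre transform and the convergence of the symbolic series defining the time-one map of $\frac{d}{ds}f_s=\ad(P_s)\comp f_s$ are precisely the analytic content of the S-K-K theorem, and neither is carried out. Note also that your proposed route --- factorization into elementary contact transformations plus integration of a Hamiltonian isotopy --- is not the classical one: \cite{S-K-K} quantizes $\chi$ by choosing a simple holonomic $\she_{M\times M'}$-module supported on the Lagrangian graph of $\chi$ in $P^*(M\times M')$ and defining $f$ by the intertwining relation $PK=Kf^{-1}(P)$ against that kernel. Your flow method can be made to work, but it requires the same delicate growth estimates on total symbols, so nothing is gained in difficulty; if you intend to complete (ii), the kernel construction is the shorter path and also makes the compatibility with anti-involutions in (iii) more transparent.
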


An isomorphism $f$ as in (ii)
is called a quantized
contact transformation over $\chi$.
Quantized contact transformations over $\chi$ are not unique.
It was noticed
in~\cite{Kas96} that one can reduce the ambiguity to an inner
automorphism by considering anti-involutions as in (iii) (see Lemma~\ref{lem:stkpe} below).

The $\C$-algebra $\she_M$ is left and right Noetherian. It is another fundamental result of \cite{S-K-K} that the support of a coherent $\she_M$-module
is a closed involutive subvariety of $P^*M$. A coherent $\she_M$-module supported by a Lagrangian subvariety is called holonomic. We refer e.g.\ to \cite{Kas03} for the notion of regular holonomic $\she_M$-module.

\subsection{Microdifferential algebroid}\label{sse:E}

Let $Y$ be a complex contact manifold.

\begin{definition}\label{def:Ealg}
  A microdifferential algebra $\she$ on $Y$ is a sheaf of
  $\C$-algebras such that, locally on $Y$, there is a $\C$-algebra
  isomorphism $\she|_V \simeq \opb\chi\she_M$ in a Darboux chart
  \eqref{eq:DarbC}.
\end{definition}

Since any $\C$-algebra automorphism of $\she_M$ is filtered and
symbol preserving, it follows that a microdifferential algebra $\she$
on $Y$ is filtered and has symbol maps
\[
\sigma_k \colon \filt[k]\she \to \sho_Y(k).
\]

\begin{example}\label{ex:eiii}
  Let $Y=P^*M$ be the projective cotangent bundle to a complex
  manifold $M$ and denote by $\Omega_M = \Omega_M^{\dim M}$ the invertible $\sho_M$-module
  of top-degree forms. Consider the algebra of twisted microdifferential operators
  \[
  \she_{\Omega_M^{\otimes 1/2}} =
  \Omega_M^{\otimes 1/2}\tens[\sho_M]\she_M\tens[\sho_M]\Omega_M^{\otimes-1/2}.
  \]
Then $\she_{\Omega_M^{\otimes 1/2}}$ is a microdifferential algebra on $P^*M$,
and the formal adjoint $*$ of Remark~\ref{rem:Eloc}
  gives a canonical anti-involution of $\she_{\Omega_M^{\otimes 1/2}}$.
\end{example}

\begin{definition}\label{def:Pa}
  The gerbe $\stkpe$ on $Y$ is defined as follows.
  \begin{enumerate}
  \item For an open subset $V\subset Y$, objects of $\stkpe(V)$ are
    pairs $\p = (\she,\,*)$ of a microdifferential algebra $\she$ on
    $V$ and an anti-involution $*$ of $\she$.

  \item If $\p' = (\she',*')$ is another object,
    \[
    \shHom[\stkpe](\p',\p) = \{f\in \shIso[\C\text{-}\stkalg](\she',
    \she) \setdef f *' = * f \}.
    \]
  \end{enumerate}
\end{definition}

(The fact that the stack of groupoids $\stkpe$ is a gerbe follows from Theorem~\ref{thm:qchi}.)

\begin{lemma}[{\cite[Lemma~1]{Kas96}}]\label{lem:stkpe}
   For any $\p = (\she,\,*)\in\stkpe$ there is
  an isomorphism of sheaves of groups
  \[
  \psi \colon \{b \in \she^\times \setdef b^*b=1,\
  \sigma_0(b)=1 \} \isoto \shEnd[\stkpe](\p), \quad b\mapsto \Ad(b).
  \]
\end{lemma}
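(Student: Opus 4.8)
The plan is to verify that $\psi$ is a well-defined group homomorphism, check injectivity, and then spend the real effort on surjectivity. Since both sides are sheaves of groups, I would argue locally and, using Definition~\ref{def:Ealg}, reduce to the case $\she=\she_M$ in a Darboux chart on $P^*M$; recall that a morphism $\p\to\p$ in $\stkpe$ is exactly a $\C$-algebra automorphism $f$ of $\she$ with $f*=*f$. For well-definedness, the relation $b^{*}b=1$ forces $b^{*}=b^{-1}$, whence $(b\,a\,b^{-1})^{*}=(b^{-1})^{*}a^{*}b^{*}=b\,a^{*}b^{-1}=\Ad(b)(a^{*})$, so $\Ad(b)$ commutes with $*$ and genuinely lies in $\shEnd[\stkpe](\p)$. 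The set $\{b\setdef b^{*}b=1,\ \sigma_0(b)=1\}$ is a subgroup of $\she^{\times}$ (stable under products and, via $b^{-1}=b^{*}$, under inverses), and $b\mapsto\Ad(b)$ is clearly a homomorphism.

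For injectivity, $\Ad(b)=\id$ means $b$ is central in $\she$, and the centre of $\she_M$ is $\C$: on the associated graded a central element has Poisson-central homogeneous symbols, which are constants concentrated in degree $0$, and a filtered induction then gives $b\in\C$. Combined with $\sigma_0(b)=1$ this forces $b=1$, so $\ker\psi$ is trivial.

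The heart of the matter is surjectivity. Let $f\in\shEnd[\stkpe](\p)$. By Theorem~\ref{thm:qchi}(i), applied to the identity contact transformation, $f$ is filtered and symbol preserving, i.e.\ $\sigma_k(f(a))=\sigma_k(a)$. First I would show, ignoring $*$ for the moment, that such an $f$ is inner, $f=\Ad(b)$ for some invertible $b$ of order $0$, by successive approximation. Writing $f=\id+d$ with $d$ lowering the order, the leading part of $d$ induces a derivation $\delta$ of the graded ring $\bigoplus_k\sho_{P^*M}(k)$; because $f$ respects commutators, $\delta$ preserves the Poisson bracket, hence is a symplectic and therefore, on the contractible chart, Hamiltonian vector field, whose Hamiltonian is fixed in the right homogeneity degree by contraction with the canonical $1$-form. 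Realising this Hamiltonian as the principal symbol of the adjoint action of a suitable operator $b_0$ (one order lower than the relevant term of $d$), one replaces $f$ by $\Ad(b_0)^{-1}\comp f$, which now agrees with the identity to one higher order, and iterates; the product of the successive corrections converges in the $\Z$-filtration of $\she$ to the desired $b$. I expect this inner-ness statement, namely the Hamiltonian realisability at each step together with the convergence, to be the main obstacle.

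Finally I would use the anti-involution to normalise $b$. Applying $*$ to $f(a)=b\,a\,b^{-1}$ and invoking $f*=*f$ gives $b\,a^{*}b^{-1}=(b^{*})^{-1}a^{*}b^{*}$ for every $a$, so $b^{*}b$ commutes with all elements and hence, by the description of the centre above, equals a constant $c\in\C^{\times}$; in particular $b^{*}=c\,b^{-1}$. Passing to order-$0$ symbols and using $\sigma_0(a^{*})=\sigma_0(a)$ — the antipodal map $\xi\mapsto-\xi$ of Remark~\ref{rem:Eloc} acting trivially on $P^*M$ — yields $\sigma_0(b)=c\,\sigma_0(b)^{-1}$, so $\sigma_0(b)^{2}=c$ and therefore $\sigma_0(b)$ is itself a constant scalar. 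Dividing $b$ by this scalar does not affect $\Ad(b)$ and produces a new $b$ with $\sigma_0(b)=1$; since $\sigma_0(b)^{2}=c$, the relation $b^{*}b=c$ becomes $b^{*}b=1$ at the same time. Hence $b$ belongs to the source group and $\psi(b)=f$, which proves surjectivity and completes the argument.
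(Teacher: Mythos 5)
The paper offers no proof of this lemma (it is quoted verbatim from \cite{Kas96}), so I can only judge your argument on its own terms. The well-definedness and injectivity parts are fine, and so is the final normalisation of $b$ \emph{once} an inner presentation $f=\Ad(b)$ with $b$ of order $0$ is available. The gap is in the step you yourself call the heart of the matter, and it occurs at the very first stage of the successive approximation. Your intermediate claim --- that, ignoring $*$, every symbol-preserving automorphism of $\she$ is of the form $\Ad(b)$ with $b\in\filt[0]{\she}^\times$ --- is false. On $\{\xi_1\neq 0\}\subset P^*M$ the operator $\partial_{x_1}$ is invertible and $\Ad(\partial_{x_1})$ is a symbol-preserving automorphism; if it equalled $\Ad(b)$ with $b$ of order $0$, then $\partial_{x_1}b^{-1}$ would be central, hence a constant, which is impossible for order reasons. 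Concretely, the degree $-1$ Poisson derivation attached to $\Ad(\partial_{x_1})$ is the vector field $v=\xi_1^{-1}\partial_{x_1}$, which is symplectic and homogeneous of degree $-1$ but is \emph{not} Hamiltonian for any $0$-homogeneous function: for $\varphi\in\sho_{P^*M}(0)$ one has $i_{H_\varphi}\theta=\infGm\varphi=0$, whereas $i_v\theta=1$. This is exactly where your recipe ``the Hamiltonian is fixed in the right homogeneity degree by contraction with the canonical $1$-form'' breaks down: for a symplectic field of homogeneity degree $k-1$ the candidate Hamiltonian is $\tfrac1k\, i_v\theta\in\sho(k)$, and the first step of your induction is the excluded case $k=0$. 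Contractibility of the Darboux chart does not help, since the obstruction $i_v\theta\in\C$ is one of homogeneity, not of $H^1$.

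The anti-involution therefore cannot be deferred to the end: it is needed precisely to kill this degree $-1$ obstruction, which is the whole point of \cite{Kas96} (cf.\ the remark following Theorem~\ref{thm:qchi}). Indeed, since $\sigma_k(a^*)=(-1)^k\sigma_k(a)$, comparing the order-$(k-1)$ symbols of $f(a^*)-a^*=(f(a)-a)^*$ gives $(-1)^k\delta(\alpha)=(-1)^{k-1}\delta(\alpha)$ for the leading derivation $\delta$, so $\delta=0$; that is, $f*=*f$ forces $f(a)-a$ to have order $\le\ord(a)-2$, in accordance with the fact that $\Ad(b)(a)-a=[b,a]b^{-1}$ drops the order by $2$ when $\sigma_0(b)=1$. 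From degree $-2$ onwards every homogeneous symplectic vector field \emph{is} Hamiltonian in the correct degree and your induction can run (keeping the corrections compatible with $*$, or normalising at the end as you do). A secondary concern: convergence of the infinite product of corrections is not automatic from the $\Z$-filtration alone, because sections of $\she$ must satisfy growth conditions on their total symbols; this step needs an additional estimate or an appeal to \cite{S-K-K}.
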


By this lemma, we have a natural $\C$-algebroid data on $Y$, and hence
a $\C$-algebroid.

\begin{definition}\label{def:E}
  The microdifferential algebroid $\stke_Y$ is the $\C$-algebroid
  associated to $(\stkpe,\Phi_\stke,\ell)$ where
  \[
  \Phi_\stke(\p) = \she, \quad \Phi_\stke(f) = f, \quad \ell_\p(g) =
  b,
  \]
  for $\p = (\she,\,*)$, $f\colon \p'\to\p$ and $g = \psi(b)$.
\end{definition}

By the construction in \S~\ref{sse:algdata}, this means that objects of $\stke_Y$ are microdifferential algebras $(\she,*)$ endowed with an anti-involution.
Morphisms $(\she',*')\to(\she,*)$ in $\stke_Y$ are equivalence classes of pairs
$(a,f)$ with $a\in\she$ and $f\colon\she'\isoto\she$ such that $f*' =
*f$. The equivalence relation is given by $(a,\Ad(b)f) \sim (ab,f)$ for $b \in \she^\times$ with
$b^*b=1$ and $\sigma_0(b)=1$.

\begin{remark}\label{re:eiii}
  Let $Y=P^*M$ be the projective cotangent bundle to a complex
  manifold $M$. With notations as in Example~\ref{ex:eiii}, a  global object of
  $\stke_{P^*M}$ is given by $(\she_{\Omega_M^{\otimes 1/2}},*)$. 
This implies that the algebroid $\stke_{P^*M}$ is represented by the
  microdifferential algebra $\she_{\Omega_M^{\otimes 1/2}}$.
\end{remark}

\subsection{Holonomic modules on contact manifolds}\label{se:regE}

Let $Y = (Z\to[\gamma]Y,\theta)$ be a complex contact manifold.
Consider the stack $\stkMod(\stke_Y)$ of modules over the microdifferential algebroid $\stke_Y$.
For a subset $S \subset Y$, denote by
$\stkMod[S](\stke_Y)$ the full substack of $\stkMod(\stke_Y)$ of objects supported on
$S$.
By construction, $\stke_Y$ is locally represented by microdifferential
algebras.
As the notions of coherent and regular holonomic microdifferential modules are local and invariant by quantized contact transformations,
they make sense also for objects of $\stkMod(\stke_Y)$.
Denote by $\stkMod[\coh](\stke_Y)$ and $\stkMod[\reghol](\stke_Y)$ the full substacks
of $\stkMod(\stke_Y)$ whose objects are coherent and regular holonomic, respectively. 

Let $\stkr$ be an invertible $\C$-algebroid $\stkr$.
Then $\stkMod(\stkr)$ is locally equivalent to $\stkMod(\C_Y)$.
Hence the notion of local system makes sense for objects of $\stkMod(\stkr)$.
Denote by $\stack{LocSys}(\stkr)$ the full
substack of $\stkMod(\stkr)$ whose objects are local systems.

Consider the invertible $\C$-algebroid $\C_{\Omega_\Lambda^{\otimes 1/2}}$ on
$\Lambda$ as in Example~\ref{ex:1/2}.

By~\cite[Proposition~4]{Kas96} (see also \cite[Corollary~6.4]{DS07}),
one has

\begin{proposition}\label{pro:LYsmooth}
  For a smooth Lagrangian submanifold $\Lambda \subset Y$ there is an
  equivalence
  \[
  \stkMod[\Lambda,\reghol](\stke_Y) \simeq
  \oim{p}\stack{LocSys}(\opb{p}\C_{\Omega_\Lambda^{\otimes 1/2}}),
  \]
  where $p\colon \opb\gamma\Lambda\to\Lambda$ is the
  restriction of $\gamma\colon Z\to Y$.
\end{proposition}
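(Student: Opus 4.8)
Since both sides are stacks on $\Lambda$, it suffices to construct a morphism between them and to check that it is an equivalence locally on $\Lambda$. First I would pass to a contact Darboux chart~\eqref{eq:DarbC}, reducing to the model $Y=P^*M$; by Remark~\ref{re:eiii} the algebroid $\stke_Y$ is then represented by the single algebra $\she_{\Omega_M^{\otimes 1/2}}$, so that over a small open subset the stack $\stkMod[\Lambda,\reghol](\stke_Y)$ becomes the category of regular holonomic $\she_{\Omega_M^{\otimes 1/2}}$-modules supported on $\Lambda$. If convenient, a further quantized contact transformation (Theorem~\ref{thm:qchi}) brings the smooth Lagrangian $\Lambda$ to the projectivized conormal $P^*_N M$ of a submanifold $N\subset M$, putting the classical structure theory into its standard form.

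The main input is the classical classification of regular holonomic microdifferential modules along a smooth Lagrangian (\cite{S-K-K}, \cite{Kas03}): locally along $\Lambda$ there is a simple module $\shm_\Lambda$, unique up to isomorphism, with $\shEnd[\she](\shm_\Lambda)\simeq\C$, and every regular holonomic module supported on $\Lambda$ is locally of the form $\shm_\Lambda\tens[\C]L$ for a finite-rank local system $L$. Thus the functor $\shm\mapsto\shHom[\she](\shm_\Lambda,\shm)$, with quasi-inverse $L\mapsto\shm_\Lambda\tens[\C]L$, gives a local equivalence between regular holonomic modules along $\Lambda$ and local systems. The order filtration of $\she_M$ corresponds to the $\Gm$-homogeneity on the symplectic cone $Z$, so this multiplicity local system is naturally recorded on the conic Lagrangian $\opb\gamma\Lambda\subset Z$ rather than on $\Lambda$ itself; the operation $\oim p$ is then just the repackaging of such data as a stack on $\Lambda$.

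It remains to identify the twist. The simple module $\shm_\Lambda$ is only defined locally, and under a change of chart --- equivalently, under a quantized contact transformation compatible with the formal-adjoint anti-involution used to define $\stke_Y$ (Example~\ref{ex:eiii}, Definition~\ref{def:E}) --- its regluing carries a half-density factor on $\Lambda$. Hence the locally defined simple modules organize into a $\Z/2\Z$-gerbe on $\Lambda$, which I would identify with $\stkp_{\Omega_\Lambda^{\otimes 1/2}}$ of Example~\ref{ex:1/2}. Pulling back along $p$ and combining with the previous paragraph converts the multiplicity local systems into objects of $\stack{LocSys}(\opb p\C_{\Omega_\Lambda^{\otimes 1/2}})$, which is the asserted equivalence after applying $\oim p$.

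I expect the last identification to be the main obstacle: one must verify that the cocycle governing the regluing of $\shm_\Lambda$ under adjoint-compatible quantized contact transformations coincides with the cocycle of square roots of $\Omega_\Lambda$, keeping careful track of the interplay with the anti-involution $*$ and of the $\Gm$-homogeneity. This half-density bookkeeping is precisely the content of \cite[Proposition~4]{Kas96} (see also \cite[Corollary~6.4]{DS07}), on which I would rely.
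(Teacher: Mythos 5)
First, a point of comparison: the paper does not actually prove this proposition --- it is quoted verbatim from \cite[Proposition~4]{Kas96} (see also \cite[Corollary~6.4]{DS07}), and your sketch ultimately defers the decisive step to the same two references. So at the level of logical dependency your argument and the paper's coincide; the only question is whether your reconstruction of the cited result is sound.

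It is not, on one essential point. You assert that along a smooth Lagrangian there is a simple module $\shm_\Lambda$, \emph{unique up to isomorphism}, such that every regular holonomic module supported on $\Lambda$ is locally of the form $\shm_\Lambda\tens[\C]L$ for a local system $L$. On a contact manifold this is false: simple regular holonomic modules along a smooth Lagrangian form a one-parameter family indexed by their order $\lambda\in\C$, with $\shm_\Lambda(\lambda)\simeq\shm_\Lambda(\lambda')$ only when $\lambda-\lambda'\in\Z$ (in the model where $\Lambda$ is the projectivized conormal to $\{x_1=0\}$ these are the modules generated by $x_1^{\lambda}$, i.e.\ $\she/\she(x_1\partial_{x_1}-\lambda)$). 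This non-uniqueness is not a technicality to be absorbed into the half-density twist: it is precisely why the right-hand side of the statement involves local systems on the $\Gm$-bundle $\opb\gamma\Lambda$ rather than on $\Lambda$. If your uniqueness claim held, the functor $\shm\mapsto\shHom[\she](\shm_\Lambda,\shm)$ would produce a local system on $\Lambda$ itself and the statement would read $\stack{LocSys}(\C_{\Omega_\Lambda^{\otimes 1/2}})$, a strictly smaller category which misses, for instance, the simple modules of non-integral order (whose images under the equivalence have nontrivial monodromy along the fibers of $p$). Your subsequent remark that the multiplicity local system is ``naturally recorded on the conic Lagrangian'' gestures at the correct mechanism, but it is incompatible with the uniqueness claim as stated; the correct formulation organizes the family $\{\shm_\Lambda(\lambda)\}_{\lambda}$ into a (twisted) locally constant family over $\opb\gamma\Lambda$ and takes $\shHom$ against that whole family. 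With that correction, and the half-density bookkeeping you rightly attribute to \cite{Kas96} and \cite{DS07}, your outline is the standard one.
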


Recall that a $\C$-linear triangulated category $\stkt$ is called
Calabi-Yau of dimension $d$ if for each $M,N\in\stkt$ the vector
spaces $\Hom[\stkt](M,N)$ are finite-dimensional and there are
isomorphisms
\[
\Hom[\stkt](M,N)^\vee \simeq \Hom[\stkt](N,M[d]),
\]
functorial in $M$ and $N$.
Here $H^\vee$ denotes the dual of a vector space $H$.

Denote by $\BDC_\reghol(\stke_Y)$ the full triangulated subcategory of
the bounded derived category of $\stke_Y$-modules whose objects have
regular holonomic cohomologies.

The following theorem is obtained in \cite{KS08}\footnote{The statement in \cite[Theorem~9.2~(ii)]{KS08} is not correct. It should be read as Theorem~\ref{th:CY} in the present paper} as a corollary of
results from \cite{KK81}.

\begin{theorem}\label{th:CY}
  If $Y$ is compact, then $\BDC_\reghol(\stke_Y)$ is a $\C$-linear
  Calabi-Yau triangulated category of the same dimension as $Y$.
\end{theorem}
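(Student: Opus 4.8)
The plan is to transfer the statement to the derived category of $\C$-con\-struct\-ible sheaves on $Y$ and then read off the Calabi--Yau pairing from Poincar\'e--Verdier duality on the compact manifold $Y$. First I would record that, for regular holonomic $\stke_Y$-modules $\shm,\shn$, the complex $\rhom[\stke_Y](\shm,\shn)$ belongs to $\BDC_{\C\text{-c}}(Y)$ and that global morphisms are computed by its sections, $\RHom[\stke_Y](\shm,\shn)\simeq\rsect(Y;\rhom[\stke_Y](\shm,\shn))$. Both assertions are local, so by a Darboux chart \eqref{eq:DarbC} they reduce to the microdifferential algebra $\she_M$ on $P^*M$, where the constructibility of $R\mathcal{H}om$ between regular holonomic modules is one of the central results of~\cite{KK81}. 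Since $Y$ is compact, constructibility at once gives that $\Hom[{\BDC_\reghol(\stke_Y)}](\shm,\shn)=H^0\rsect(Y;\rhom[\stke_Y](\shm,\shn))$, and all the higher $\mathrm{Ext}$-groups, are finite-dimensional; this settles the finiteness half of the statement.

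The heart of the matter is a functorial local self-duality in $\BDC_{\C\text{-c}}(Y)$,
\[
\rhom[\stke_Y](\shm,\shn)\simeq\ddual_Y\,\rhom[\stke_Y](\shn,\shm)\,[-\dim Y],
\]
where $\ddual_Y$ is the Verdier duality functor. I would deduce this isomorphism, after reduction to the local model $P^*M$, from the biduality of regular holonomic $\stke_Y$-modules and the self-duality of their micro-solution complexes, both established in~\cite{KK81}. The pairing underlying the Calabi--Yau structure is then the composition $\Hom(\shm,\shn)\tens\Hom(\shn,\shm[\dim Y])\to\Hom(\shm,\shm[\dim Y])\to\C$, the last arrow being the trace furnished by the global duality below.

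Applying $\rsect(Y;-)$ to the displayed isomorphism and invoking Poincar\'e--Verdier duality on the compact \emph{complex} manifold $Y$ completes the proof. Indeed $Y$ is canonically oriented, so $\omega_Y\simeq\C_Y[2\dim Y]$, and compactness gives $\rsect(Y;\ddual_Y G)\simeq\rsect(Y;G)^\vee$; hence the local duality becomes $\rsect(Y;\rhom[\stke_Y](\shm,\shn))\simeq\rsect(Y;\rhom[\stke_Y](\shn,\shm))^\vee[-\dim Y]$. Taking $H^0$ yields the functorial isomorphisms $\Hom(\shm,\shn)^\vee\simeq\Hom(\shn,\shm[\dim Y])$, proving that $\BDC_\reghol(\stke_Y)$ is Calabi--Yau of dimension $\dim Y$.

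The delicate point, and the one responsible for the correction to~\cite[Theorem~9.2(ii)]{KS08}, is the exact value of the shift in the local duality, hence the Calabi--Yau dimension. A naive computation on a Lagrangian support $\Lambda$ alone, of real dimension $2\dim\Lambda=\dim Y-1$, would produce the even shift $\dim Y-1$; the correct odd shift $\dim Y$ arises because $\rhom[\stke_Y](\shm,\shn)$ genuinely involves the fibre of the $\Gm$-bundle $\gamma\colon Z\to Y$. Concretely, by Proposition~\ref{pro:LYsmooth} a regular holonomic module along a smooth Lagrangian $\Lambda$ is a local system on $\opb\gamma\Lambda$ rather than on $\Lambda$, and pushing forward along the circle factor of the $\C^\times$-fibre contributes one extra odd degree; for a simple module one checks that $\rhom[\stke_Y](\shm,\shm)$ is computed by $\roim p\C$ with $p\colon\opb\gamma\Lambda\to\Lambda$, which is self-dual precisely under $\ddual_Y[-\dim Y]$. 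Verifying this shift rigorously, and its compatibility with the composition pairing, is where I expect the real work to lie.
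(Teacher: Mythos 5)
The paper gives no proof of this theorem---it is imported from \cite{KS08} as a corollary of the constructibility and duality theorems of \cite{KK81}---and your sketch reconstructs exactly that intended argument: constructibility of $\rhom[\stke_Y](\shm,\shn)$, the local Verdier duality $\rhom[\stke_Y](\shm,\shn)\simeq\ddual_Y\rhom[\stke_Y](\shn,\shm)[-\dim Y]$, and global Poincar\'e--Verdier duality on the compact $Y$. In particular you correctly locate the source of the odd shift $\dim Y$ (rather than the even $\dim Y-1$ one would get from the Lagrangian support alone) in the $\C^\times$-fibre of $\gamma\colon Z\to Y$ via Proposition~\ref{pro:LYsmooth}, which is precisely the point of the paper's footnote correcting \cite[Theorem~9.2~(ii)]{KS08}.
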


\subsection{Holonomic modules on symplectic manifolds}

Let $X= (X,\omega)$ be a complex symplectic manifold and
$\Lambda\subset X$ a closed Lagrangian subvariety. By
Proposition~\ref{pro:Lambda} there exists a neighborhood $U\supset
\Lambda$, a contactification $\rho\colon V\to U$ and a closed Lagrangian
subvariety $\Gamma \subset V$ such that $\rho$ induces an isomorphism
$\Gamma \to \Lambda$. Let us still denote by $\rho$ the composition
$V\to U\to X$.  We set
\begin{align*}
  \stack{RH}_{X,\Lambda} &= \oim\rho\stkMod[\Gamma,\reghol](\stke_V),\\
  \mathbf{DRH}_\Lambda(X) &= \BDC_{\Gamma,\reghol}(\stke_V).
\end{align*}
By unicity of the pair $(\rho,\Gamma)$, the stack $\stack{RH}_{X,\Lambda}$ and the triangulated category $\mathbf{DRH}_\Lambda(X)$ only depend on $\Lambda$.

For $\Lambda\subset\Lambda'$, there are natural fully faithful, exact functors
\[
\stack{RH}_{X,\Lambda} \to \stack{RH}_{X,\Lambda'}, \quad
\mathbf{DRH}_\Lambda(X) \to \mathbf{DRH}_{\Lambda'}(X).
\]
The family of closed Lagrangian subvarieties of $X$, ordered by inclusion, is filtrant.

\begin{definition}\label{def:hol}
  \begin{itemize}
  \item[(i)] The stack of regular holonomic microdifferential modules
    on $X$ is the $\C$-linear abelian stack defined by
    \[
    \stack{RH}_X = \ilim[\Lambda]\stack{RH}_{X,\Lambda}.
    \]
  \item[(ii)] The triangulated category of complexes of regular
    holonomic microdifferential modules on $X$ is the $\C$-linear
    triangulated category defined by
    \[
    \mathbf{DRH}(X) = \ilim[\Lambda]\mathbf{DRH}_\Lambda(X).
    \]
  \end{itemize}
\end{definition}

As a corollary of Proposition~\ref{pro:LYsmooth}, we get

\begin{theorem}
For a closed smooth Lagrangian submanifold $\Lambda \subset
X$, there is an equivalence
\[
\stack{RH}_{X,\Lambda} \simeq
\oim{p_1}\stack{LocSys}(\opb{p_1}\C_{\Omega_\Lambda^{\otimes 1/2}}),
\]
where $p_1\colon \Lambda\times \C^\times\to\Lambda$ is the projection.
\end{theorem}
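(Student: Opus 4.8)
The plan is to deduce the statement from Proposition~\ref{pro:LYsmooth}, applied to the contact manifold $V$ and the Lagrangian $\Gamma$ furnished by Proposition~\ref{pro:Lambda}, and then to transport the resulting equivalence from $\Gamma$ to $\Lambda$ along the projection $\rho|_\Gamma$. First I would check that $\Gamma$ is a \emph{smooth} Lagrangian submanifold of $V$, so that Proposition~\ref{pro:LYsmooth} applies. Since $\Lambda$ is smooth one has $\Lambda_\reg = \Lambda$, and hence the map $\rho|_\Gamma\colon \Gamma\to\Lambda$ of Proposition~\ref{pro:Lambda}, being a homeomorphism which is a holomorphic isomorphism over $\Lambda_\reg$, is in fact a biholomorphism $g := \rho|_\Gamma\colon \Gamma\isoto\Lambda$. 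In particular $\Gamma$ is a complex submanifold, and it is Lagrangian in $V$ by construction.

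Next I would make the contact geometry of $V$ explicit. By the description in \S\ref{sse:contsymp}, the contact structure on the contactification $V$ is carried by the principal $\Gm$-bundle $q_1\colon V\times\C^\times\to V$. Hence in Proposition~\ref{pro:LYsmooth} the map $p\colon \opb\gamma\Gamma\to\Gamma$ is the projection $\Gamma\times\C^\times\to\Gamma$, and the proposition yields
\[
\stkMod[\Gamma,\reghol](\stke_V) \simeq \oim{p}\stack{LocSys}(\opb{p}\C_{\Omega_\Gamma^{\otimes 1/2}}).
\]
Writing $\widetilde g = g\times\id\colon \Gamma\times\C^\times\isoto\Lambda\times\C^\times$, one gets a commutative square $p_1\comp\widetilde g = g\comp p$ whose two horizontal arrows are isomorphisms.

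Finally I would apply $\oim\rho$. All the modules in sight are supported on $\Gamma$, and $\rho|_\Gamma = g$ is an isomorphism onto $\Lambda$; so, up to the closed embedding $\Lambda\hookrightarrow X$, pushing forward along $\rho$ amounts to pushing forward along $g$, giving
\[
\stack{RH}_{X,\Lambda} = \oim\rho\stkMod[\Gamma,\reghol](\stke_V) \simeq \oim{g}\oim{p}\stack{LocSys}(\opb{p}\C_{\Omega_\Gamma^{\otimes 1/2}}).
\]
Using $\oim g\oim p = \oim{p_1}\oim{\widetilde g}$ from the commutative square, together with the fact that $\widetilde g$ is an isomorphism (so $\oim{\widetilde g}$ preserves local systems and intertwines the coefficient algebroids), it remains to identify $\oim{\widetilde g}\opb p\C_{\Omega_\Gamma^{\otimes 1/2}}$ with $\opb{p_1}\C_{\Omega_\Lambda^{\otimes 1/2}}$. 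Since the square commutes and $g,\widetilde g$ are isomorphisms this equals $\opb{p_1}\oim g\C_{\Omega_\Gamma^{\otimes 1/2}}$, and the step I expect to carry the actual geometric content is the identification $\oim g\C_{\Omega_\Gamma^{\otimes 1/2}}\simeq\C_{\Omega_\Lambda^{\otimes 1/2}}$: as $g$ is a biholomorphism one has $g^*\Omega_\Lambda\simeq\Omega_\Gamma$, whence the $\Z/2\Z$-gerbes of half-canonical sheaves correspond under $g$ and the associated $\C$-algebroids of Example~\ref{ex:1/2} match. Combining these identifications yields the desired equivalence $\stack{RH}_{X,\Lambda}\simeq\oim{p_1}\stack{LocSys}(\opb{p_1}\C_{\Omega_\Lambda^{\otimes 1/2}})$.
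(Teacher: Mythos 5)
Your proof is correct and fills in exactly the steps the paper leaves implicit when it derives this theorem "as a corollary of Proposition~\ref{pro:LYsmooth}": smoothness of $\Gamma$ from $\Lambda_\reg=\Lambda$, identification of the $\Gm$-bundle of the contactification $V$ with $V\times\C^\times\to V$, and transport along the biholomorphism $\rho|_\Gamma$ including the matching of the half-canonical gerbes. This is the same approach as the paper's.
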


\begin{remark}
When $X$ is reduced to a point, the
category of regular holonomic microdifferential modules on $X$ is
equivalent to the category of local systems on $\C^\times$.
\end{remark}

As a corollary of Theorem~\ref{th:CY}, we get

\begin{theorem}
If $X$ is compact, then $\mathbf{DRH}(X)$ is a $\C$-linear Calabi-Yau triangulated
category of dimension $\dim X+1$.
\end{theorem}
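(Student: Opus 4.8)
The plan is to reduce the statement to Theorem~\ref{th:CY} by exploiting the filtrant structure of the direct limit defining $\mathbf{DRH}(X)$. First I would fix two objects $M,N\in\mathbf{DRH}(X)$. Since the family of closed Lagrangian subvarieties of $X$ is filtrant and the transition functors $\mathbf{DRH}_\Lambda(X)\to\mathbf{DRH}_{\Lambda'}(X)$ are fully faithful and exact, both $M$ and $N$ arise from a common finite stage $\mathbf{DRH}_\Lambda(X)=\BDC_{\Gamma,\reghol}(\stke_V)$, and full faithfulness yields
\[
\Hom_{\mathbf{DRH}(X)}(M,N[k]) \simeq \Hom_{\BDC_{\Gamma,\reghol}(\stke_V)}(M,N[k])
\]
for every $k\in\Z$. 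Thus it suffices to establish finite-dimensionality and the Calabi-Yau pairing inside a single category $\BDC_{\Gamma,\reghol}(\stke_V)$.

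Next I would record the geometric input supplied by Proposition~\ref{pro:Lambda}: the contactification $\rho\colon V\to U$ is a principal $\C$-bundle over a neighborhood $U$ of $\Lambda$, so $\dim V=\dim X+1$, and $\rho$ restricts to a homeomorphism $\Gamma\to\Lambda$. Since $X$ is compact and $\Lambda$ is a closed subvariety, $\Lambda$ is compact, hence so is its homeomorphic image $\Gamma$. This is the crucial point: although the contact manifold $V$ itself is never compact (being a $\C$-bundle), every object of $\BDC_{\Gamma,\reghol}(\stke_V)$ has compact support.

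Then I would invoke the Calabi-Yau mechanism of Theorem~\ref{th:CY}. For regular holonomic $M,N$ supported on the compact set $\Gamma$, the complex $\RHom_{\stke_V}(M,N)$ is cohomologically constructible with support in $\Gamma$, so its hypercohomology is finite-dimensional, giving finiteness of $\Hom_{\BDC_{\Gamma,\reghol}(\stke_V)}(M,N)$; Poincar\'e--Verdier duality on $V$, together with the self-duality coming from the anti-involution built into $\stke_V$, then yields a perfect pairing
\[
\Hom_{\BDC_{\Gamma,\reghol}(\stke_V)}(M,N)^\vee \simeq \Hom_{\BDC_{\Gamma,\reghol}(\stke_V)}(N,M[\dim V]),
\]
functorial in $M$ and $N$. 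As $\dim V=\dim X+1$ is independent of $\Lambda$, combining this with the first paragraph shows that $\mathbf{DRH}(X)$ is Calabi-Yau of dimension $\dim X+1$.

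The main obstacle is this last step: Theorem~\ref{th:CY} is stated for a \emph{compact} contact manifold $Y$, whereas $V$ is noncompact. I expect the real work to be checking that the finiteness and duality results of~\cite{KK81} underlying Theorem~\ref{th:CY} depend only on compactness of the support of the modules, not on compactness of the ambient manifold --- equivalently, that one may replace $V$ by a relatively compact contact neighborhood of $\Gamma$, or embed such a neighborhood into a compact contact manifold, without affecting the categories $\BDC_{\Gamma,\reghol}$ or the pairing. Once this localization of Theorem~\ref{th:CY} to compact supports is justified, the corollary follows formally.
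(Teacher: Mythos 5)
Your proposal is correct and follows what the paper intends: the paper offers no written proof, simply declaring the statement ``a corollary of Theorem~\ref{th:CY}'', and your reduction to a single stage $\BDC_{\Gamma,\reghol}(\stke_V)$ via filtrancy and full faithfulness, followed by the observation that $\Gamma$ is compact (being homeomorphic to the closed Lagrangian $\Lambda$ in the compact $X$) while $\dim V=\dim X+1$, is exactly the implicit argument. The subtlety you flag --- that $V$ is noncompact so Theorem~\ref{th:CY} must be applied in its support-based form, the finiteness and duality of \cite{KK81} depending only on compactness of the supports --- is a genuine point the paper glosses over, and you have identified the correct resolution.
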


\section{Quantization algebroid}\label{se:sympl}

In this section, we first recall the construction of the
deformation-quantization algebroid of~\cite{PS04} in terms of
algebroid data.  Then, with the same data, we construct a new
$\C$-algebroid where the deformation parameter $\h$ is no longer
central. Its centralizer is related to the deformation-quantization
algebroid through a twist by the gerbe parameterizing the primitives
of the symplectic $2$-form.

\subsection{Quantization data}

Let $X$ be a complex symplectic manifold. Let $\rho =
(Y\to[\rho] X,\alpha)$ be a contactification of $X$ and $\she$ a
microdifferential algebra on $Y$.

\begin{definition}\label{def:Walg}
  A deformation parameter is an invertible section
  $\h\in\filt[-1]{\she}$ such that $\iota(\sigma_{-1}(\h)) = \alpha$,
  under the embedding \eqref{eq:OY-1}.
\end{definition}

\begin{example}
  \label{exa:stkpw}
  Let $(t;\tau)$ be
  the symplectic coordinates on $T^*\C$. Recall from
  Example~\ref{ex:PMC} the contactification of the conormal bundle $T^*M$ to a complex manifold $M$ given by
  \[
  P^*(M\times\C) \supset \{\tau\neq 0\}\to[\rho] T^*M.
  \]
  In this case the condition $\iota(\sigma_{-1}(\h)) = \alpha$ reads
  $\sigma_{-1}(\h) = \tau^{-1}$.  Denote by
  $\partial_t\in\filt[1]{\she_\C}$ the operator with total symbol
  $\tau$. It induces a deformation parameter $\h=\partial_t^{-1}$ in
  $\she_{M\times\C}$.
\end{example}

Recall that $T_\lambda\colon Y\to Y$ (for $\lambda\in\Ga$) denotes the 
$\Ga$-action on $Y$ and 
$\infGa$ denotes its infinitesimal generator.
Note that
\[
\ad(\h^{-1}) = \tfrac d{d\lambda}\Ad(e^{\lambda\h^{-1}})|_{\lambda=0}
\]
is a $\C$-linear derivation of $\she$ inducing $\infGa$ on symbols.
This derivation is integrable, and induces the isomorphism
\[
e^{\lambda\Ad(\h^{-1})} = \Ad(e^{\lambda\h^{-1}})\colon \oim{(T_{-\lambda})}\she \isoto \she.
\]
This is a quantized contact transformation over $T_{-\lambda}$.

\begin{definition}\label{def:Pw}
  The gerbe $\stkpw$ on $X$ is defined as follows.
  \begin{enumerate}
  \item Objects on $U\subset X$ are quadruples $\q =
    (\rho,\,\she,\,*,\,\h)$ of a contactification $\rho = (V \to[\rho]
    U,\alpha)$, a microdifferential algebra $\she$ on $V$, an
    anti-involution $*$ of $\she$ and a deformation parameter
    $\h\in\filt[-1]{\she}$ such that $\h^* = -\h$.

  \item If $\q' = (\rho',\she',*',\h')$ is another object,
    \begin{align*}
      \shHom[\stkpw](\q',\q) = \{(\chi,\,f)\setdef &\chi\in
      \shHom[\stkc_\omega](\rho',\rho),\
      f \in \shIso[\C\text{-}\stkalg](\oim\chi\she', \she),\\
      &f *' = * f,\ f(\h') = \h \},
    \end{align*}
    with composition given by $(\chi,f) \circ (\chi',f') =
    \bigl(\chi\chi',f(\oim\chi f')\bigr)$.
  \end{enumerate}
\end{definition}

Note that $\Ad(e^{\lambda\h^{-1}})$ commutes with $*$ for
$\lambda\in\Ga$, since $\h^* = -\h$.

\begin{remark}
  \label{re:stkpw}
Let $M$ be a complex manifold. With notations as in Example~\ref{exa:stkpw}, the operator
  $\partial_t\in\filt[1]{\she_\C}$ induces a deformation parameter $\h=\partial_t^{-1}$ in the algebra
  $\she_{\Omega_{M\times\C}^{\otimes 1/2}}$ of twisted microdifferential operators.  Hence $\stkp_{T^*M}$ has a global object given by
  \[
  (\rho,\, \she_{\Omega_{M\times\C}^{\otimes 1/2}}\bigr|_{\{\tau\neq 0\}},\,
  *,\, \partial_t^{-1}),
  \]
  with $*$ the anti-involution given by the formal adjoint.
\end{remark}

\begin{lemma}[{\cite[Lemma~5.4]{PS04}}]\label{lem:stkpw}
   For any $\q = (\rho,\,\she,\,*,\,\h)
  \in\stkpw(U)$ there is an isomorphism of sheaves of groups
  \[
  \psi \colon \Ga_U \times \{b \in
  \oim\rho\filt[0]{\she}^\times\setdef [\h,b]=0,\ b^*b=1,\
  \sigma_0(b)=1 \} \isoto \shEnd[\stkpw](\q)
  \]
  given by $\psi(\mu,b) = \bigl(T_\mu,\Ad(b
  e^{\mu\h^{-1}})\bigr)$.
\end{lemma}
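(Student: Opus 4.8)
The plan is to prove that $\psi$ is a well-defined isomorphism of sheaves of groups by first checking that each $\psi(\mu,b)$ is a genuine automorphism of $\q$, then constructing an inverse map. First I would verify that $\bigl(T_\mu,\Ad(b e^{\mu\h^{-1}})\bigr)$ lies in $\shEnd[\stkpw](\q)$. The contactification part is clear since $T_\mu$ is an automorphism of $\rho$ in $\stkc_\omega$. For the algebra part, recall from the discussion preceding Definition~\ref{def:Pw} that $\Ad(e^{\mu\h^{-1}})$ is a quantized contact transformation over $T_{-\mu}$, so it is an algebra isomorphism $\oim{(T_\mu)}\she \isoto \she$ at the symbol level; composing with the inner automorphism $\Ad(b)$ (which lives over the identity, as $b\in\oim\rho\she^\times$) yields an isomorphism $\oim{(T_\mu)}\she\isoto\she$. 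I must then check the three compatibility conditions from Definition~\ref{def:Pw}: the preservation of $*$, the preservation of $\h$, and the fact that the pair composes correctly. Preservation of $*$ follows because $\Ad(e^{\mu\h^{-1}})$ commutes with $*$ (noted after Definition~\ref{def:Pw}, using $\h^*=-\h$) and because the conditions $b^*b=1$, together with $b\in\filt[0]\she^\times$ and $\sigma_0(b)=1$, force $\Ad(b)$ to commute with $*$ as in Lemma~\ref{lem:stkpe}. Preservation of $\h$ follows since $[\h,b]=0$ gives $\Ad(b)(\h)=\h$, and $\Ad(e^{\mu\h^{-1}})(\h)=\h$ because $\h$ commutes with $\h^{-1}$.

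Next I would check that $\psi$ is a group homomorphism. The product in $\shEnd[\stkpw](\q)$ of $\psi(\mu,b)$ and $\psi(\mu',b')$ is governed by the composition law $(\chi,f)\circ(\chi',f') = \bigl(\chi\chi',f(\oim\chi f')\bigr)$ of Definition~\ref{def:Pw}. On the contactification factor this gives $T_\mu T_{\mu'} = T_{\mu+\mu'}$. On the algebra factor I must verify that $\Ad(b e^{\mu\h^{-1}})\circ\oim{(T_\mu)}\Ad(b' e^{\mu'\h^{-1}})$ equals $\Ad(b'' e^{(\mu+\mu')\h^{-1}})$ for a suitable $b''$ in the target group. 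The key computation is that $\Ad(e^{\mu\h^{-1}})$ transports $b'$ to $\Ad(e^{\mu\h^{-1}})(b')$, which again commutes with $\h$ and satisfies the same normalization conditions, so that the product can be rewritten in the form $\Ad(b'' e^{(\mu+\mu')\h^{-1}})$ with $b'' = b\,\Ad(e^{\mu\h^{-1}})(b')$ lying in the group. This shows $\psi$ is a homomorphism, and the source group structure is the evident semidirect-type product.

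Finally I would construct the inverse. Given an arbitrary $(\chi,f)\in\shEnd[\stkpw](\q)$, the contactification automorphism $\chi$ of $\rho$ is, by Lemma~\ref{lem:cont} (invertibility of the gerbe $\stkc_\omega$), necessarily of the form $T_\mu$ for a unique $\mu\in\Ga$. Then $\Ad(e^{-\mu\h^{-1}})\circ f$ is an algebra automorphism of $\she$ lying over the identity contact transformation and commuting with $*$ and fixing $\h$. By Lemma~\ref{lem:stkpe} applied to the microdifferential algebra $\she$ with its anti-involution, such an automorphism is $\Ad(b)$ for a unique $b\in\she^\times$ with $b^*b=1$ and $\sigma_0(b)=1$; the extra constraint that it fixes $\h$ translates into $[\h,b]=0$, and filtration considerations place $b$ in $\filt[0]\she^\times$. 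This recovers $(\mu,b)$ and shows $\psi$ is bijective. The main obstacle I anticipate is the homomorphism verification on the algebra factor, specifically keeping careful track of how $\Ad(e^{\mu\h^{-1}})$ acts on the element $b'$ and confirming that the transported element still satisfies all the defining conditions of the source group, so that the product genuinely closes up inside the image of $\psi$; this is where the compatibility between the inner part $b$ and the outer part $e^{\mu\h^{-1}}$ must be handled with care.
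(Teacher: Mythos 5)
Your argument is correct and is essentially the expected one (the paper itself gives no proof of this lemma, citing \cite{PS04}): split off the translation part using the fact that automorphisms of $\rho$ in $\stkc_\omega$ are locally of the form $T_\mu$, reduce to an automorphism of $(\she,*)$ over the identity, apply Lemma~\ref{lem:stkpe} to get a unique $b$ with $b^*b=1$, $\sigma_0(b)=1$, and observe that $f(\h)=\h$ translates into $[\h,b]=0$. One small correction to your homomorphism step: since $[\h,b']=0$ forces $\Ad(e^{\mu\h^{-1}})(b')=b'$, your $b''=b\,\Ad(e^{\mu\h^{-1}})(b')$ is just $bb'$, so the source is the \emph{direct} product $\Ga_U\times\{\cdots\}$ exactly as the statement asserts, rather than a genuinely twisted semidirect product.
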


One
could now try to mimic the construction of the microdifferential
algebroid $\stke_Y$ in order to get an algebroid from the algebras
$\oim\rho\she$. This fails because the automorphisms of
$(\rho,\she,*,\h)$ are not all inner, an outer automorphism being
given by $\Ad(e^{\lambda\h^{-1}})$ for $\lambda\in \C$. 

There are two
natural ways out: consider subalgebras where $\Ad(e^{\lambda\h^{-1}})$ acts as the identity, or consider bigger algebras where $\Ad(e^{\lambda\h^{-1}})$ becomes inner. 
The first solution, utilized in~\cite{PS04} to construct the deformation-quantization algebroid, is recalled in section~\ref{sse:DQ}.
The second solution is presented in section~\ref{sse:Q}, and will allow us to construct the quantization algebroid.

\subsection{Deformation-quantization algebroid}\label{sse:DQ}

Let $X$ be a complex symplectic manifold.
We can now describe
the deformation-quantization algebroid of~\cite{PS04} in terms of
algebroid data.

Let $\rho = (Y\to[\rho] X,\alpha)$ be a contactification of $X$. Let
$\she$ be a microdifferential algebra on $Y$ and $\h\in\filt[-1]\she$
a deformation parameter. To $(\rho,\she,\h)$ one associates the
deformation-quantization algebra
\[
\shw = C_\h^0 \oim\rho \she.
\]
This is the subalgebra of $\oim\rho\she$ of operators commuting with $\h$. 
Then the action of $\Ad(e^{\lambda\h^{-1}})$ is trivial
on $\shw$.

\begin{example}
As in Example~\ref{exa:stkpw}, consider the contactification of the conormal bundle $T^*M$ to a complex manifold $M$ given by
  \[
  P^*(M\times\C) \supset \{\tau\neq 0\}\to[\rho] T^*M.
  \]
Then $\h=\partial_t^{-1}$ is a deformation parameter in $\she_{M\times\C}$.
Set 
\[
\shw_M = C_{\partial_t}^0 \oim\rho \bigl( \she_{M\times\C}|_{\{\tau\neq 0\}} \bigr) .
\]
Take a local symplectic coordinate system $(x;\xi)$ on $T^*M$. Since an
element $a\in\filt[k]{\shw_M}$ commutes with $\partial_t$, its total symbol is
a formal series independent of $t$
\[
\sum_{j\leq k} \widetilde a_j(x,\xi,\tau), \quad \widetilde
a_j\in\sho_{P^*(M\times\C)}(j),
\] 
satisfying suitable growth conditions.
Setting $a_j(x,u)
= \widetilde a_{-j}(x,u,1)$ and recalling that $\h =
\partial_t^{-1}$, the total symbol of $a$ can be written as
\[
a(x,u,\h) = \sum_{j\geq -k} a_j(x,u)\h^j, \quad a_j\in\sho_{T^*M}.
\]
To make the link with usual deformation-quantization, consider two operators
$a,a'\in\filt[0]{\shw_M}$ of degree zero. Let $a(x,u)$
and $a'(x,u)$ be their respective total symbol. Then the product $aa'$ has a total symbol given
by the Leibniz star-product
\[
a(x,u) \star a'(x,u) = \sum_{\multiindex\in
\N^n} \frac{\h^{|\multiindex|}}{\multiindex !} \partial^{\multiindex}_u
a_0(x,u) \partial^{\multiindex}_x a_0'(x,u).
\]
\end{example}

Recall the
gerbe $\stkpw$ from Definition~\ref{def:Pw} and the isomorphism
$\psi$ of Lemma~\ref{lem:stkpw}.  

\begin{definition}\label{def:W}
  The deformation-quantization algebroid $\stkw_X$ is the
  $\hfield$-algebroid associated to the data
  $(\stkpw,\Phi_\stkw,\ell)$ where
  \[
  \Phi_\stkw(\q) = \shw, \quad \Phi_\stkw\bigl((\chi,\,f)\bigr) = \oim\rho f,
  \quad \ell_\q(\psi(\mu,b)) = b,
  \]
  for $\q = (\rho,\,\she,\,*,\,\h)$, $\shw = C_\h^0 \oim\rho \she$,
  $(\chi,\,f)\colon \q' \to \q$, and for $(\mu,b)$ as in Lemma~\ref{lem:stkpw}.
\end{definition}

\begin{remark}\label{re:wiii}
  Let $M$ be a complex manifold and $X=T^*M$.  With notations as in
  Remark~\ref{re:stkpw}, the algebroid $\stkw_{T^*M}$ is represented by the
  algebra $\shw_{\Omega_M^{\otimes 1/2}} = C_\h^0 \oim\rho\bigl(\she_{\Omega_{M\times\C}^{\otimes 1/2}}\bigr|_{\{\tau\neq 0\}}\bigr)$.
\end{remark}

\subsection{Quantization algebras}\label{sse:Q}

Let $\rho = (Y\to[\rho] X,\alpha)$ be a contactification of the
complex symplectic manifold $X = (X,\omega)$. Let $\she$ be a
microdifferential algebra on $Y$ and $\h\in\filt[-1]{\she}$ a
deformation parameter. Let us set
\[
\she_{[\rho]} = C_\h^\infty\oim\rho \she,
\]
where $C_\h^\infty\she = \{a \in \she\setdef \ad(\h)^N(a)=0,\ \text{locally for some }
N> 0 \}$. In local coordinates $(x,t;\xi,\tau)$, sections of
$C_\h^\infty\she$ are sections of $\she$ whose total symbol is
polynomial in $t$.

\begin{definition}\label{def:Qalg}
  The quantization algebra associated with $(\rho,\she,\h)$ is the
  $\C$-algebra
  \[
  \shquant = \DSum_{\lambda\in\Ga} \she_{[\rho]} e^{\lambda\h^{-1}}
  \]
  whose product is given by
  \[
  e^{\lambda\h^{-1}}e^{\lambda'\h^{-1}} =
  e^{(\lambda+\lambda')\h^{-1}}, \quad e^{\lambda\h^{-1}} a =
  \Ad(e^{\lambda\h^{-1}})(a) \, e^{\lambda\h^{-1}},
  \]
  for $\lambda,\lambda'\in \Ga$ and $a\in \she_{[\rho]}$.
\end{definition}

Denote by $\GRGa$ the group ring of the additive group $\C$ with
coefficients in $\C$, so that
\[
\GRGa \simeq \DSum_{\lambda\in\Ga} \C\,e^{\lambda\h^{-1}}.
\]
Then one has an algebra isomorphism
\[
C^0_\h\shquant \simeq \shw \tens[\C] \GRGa,
\]
where $\shw= \oim\rho C^0_\h\she$ is the deformation-quantization
algebra associated with $(\rho,\she,\h)$.  In particular,
$C^0_\h\shquant$ is a $\hfield \tens[\C] \GRGa$-algebra.

\subsection{Quantization algebroid}

Let $X = (X,\omega)$ be a complex symplectic manifold. Recall the
gerbe $\stkpw$ on $X$ from Definition~\ref{def:Pw} and the
isomorphism $\psi$ of Lemma~\ref{lem:stkpw}.

\begin{definition}\label{def:Q}
  The quantization algebroid on $X$ is the $\C$-algebroid
  $\stkquant_X$ associated to the data
  $(\stkpw,\Phi_\stkquant,\ell)$ where
  \[
  \Phi_\stkquant(\q) = \shquant, \quad \Phi_\stkquant(\chi,\,f) =
  \oim\rho f, \quad \ell_\q(\psi(\mu,b)) = be^{\mu\h^{-1}},
  \]
  for $\q = (\rho,\she,*,\h)$, $(\chi,\,f)\colon \q' \to \q$, and for $(\mu,b)$ as in Lemma~\ref{lem:stkpw}.
\end{definition}

Note that there is a natural action of $\C[\h]$ on $\stkquant_X$.
With the notations of \S\ref{sse:gerbes}, we set for short
\[
C^0_\h\stkquant_X = C^0_{\C[\h]}\stkquant_X.
\]

\begin{remark}
  Let $M$ be a complex manifold and $X=T^*M$.  With notations as in
  Remark~\ref{re:stkpw}, the algebroid $\stkquant_{T^*M}$ is
  represented by the algebra $\shquant_{\Omega_{M\times\C}^{\otimes 1/2}}\bigr|_{\{\tau\neq 0\}}$.
\end{remark}

Recall that $\GRGa \simeq \DSum_{\lambda\in\Ga}
\C\,e^{\lambda\h^{-1}}$. Let $\GRGa_\omega$ be the invertible
$\GRGa$-algebroid given by Definition~\ref{def:Rw} for
\[
\ell\colon \Ga \to \GRGa^\times, \quad \lambda\mapsto
e^{\lambda\h^{-1}}.
\]

The following proposition can be compared with
\cite[Remark~9.3]{PS04}.

\begin{proposition}\label{pro:WQ}
  There is an equivalence of $\hfield \tens[\C] \GRGa$-algebroids
  \[
  \stkw_X \tens[\C_X] \GRGa_\omega \simeq C^0_\h\stkquant_X.
  \]
\end{proposition}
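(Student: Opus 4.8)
The plan is to present both sides, via Proposition~\ref{pro:P}, as algebroids associated with algebroid data on the \emph{single} gerbe $\stkpw$, and then to match these data using the algebra isomorphism $C^0_\h\shquant\simeq\shw\tens[\C]\GRGa$ recorded in \S\ref{sse:Q}. Throughout, $\psi(\mu,b)=(T_\mu,\Ad(b\,e^{\mu\h^{-1}}))$ denotes the automorphisms of $\q=(\rho,\she,*,\h)\in\stkpw$ described in Lemma~\ref{lem:stkpw}.

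For the left-hand side, I would first transport $\GRGa_\omega$ onto $\stkpw$. Forgetting the microdifferential data gives a functor of gerbes $F\colon\stkpw\to\stkc_\omega$, $\q\mapsto\rho$, $(\chi,f)\mapsto\chi$, under which $\psi(\mu,b)\mapsto T_\mu$. Composing the data of Definition~\ref{def:Rw} with $F$ yields the data $(\stkpw,\Phi_R\comp F,\ell_R\comp F)$, with $\Phi_R(F(\q))=\GRGa_U$ and $\ell_R(F(\psi(\mu,b)))=e^{\mu\h^{-1}}$. I would check that the resulting algebroid is equivalent to $\GRGa_\omega$: the functor it induces to $\GRGa_\omega$ is essentially surjective because $F$ is, and it is fully faithful because the kernel of $F$ on automorphisms, namely the $\psi(0,b)$, is sent to $1$ by $\ell_R\comp F$ and hence absorbed by the defining relation of the algebroid. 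Using that the $\C_X$-tensor product of two algebroids presented by data on a common gerbe is presented by the tensor product of the algebra-valued functors and the product of the liftings, the left-hand side is the $\hfield\tens[\C]\GRGa$-algebroid associated with
\[
\bigl(\stkpw,\ \q\mapsto\shw\tens[\C]\GRGa_U,\ \ell^{\otimes}\bigr),
\qquad \ell^{\otimes}_\q(\psi(\mu,b))=b\tens e^{\mu\h^{-1}}.
\]

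For the right-hand side, the key observation is that the functor $J\colon\stkpw\to\stkquant_X$ of Proposition~\ref{pro:P} factors through $C^0_\h\stkquant_X$. Indeed, for a morphism $(\chi,f)\colon\q'\to\q$ one has $J(\chi,f)=[1,(\chi,f)]$, and since $\Phi_\stkquant(\chi,f)=\oim\rho f$ sends $\h'$ to $\h$, a short computation gives $[\h,[1,(\chi,f)]]=0$. As $\stkpw$ is a gerbe, this also shows $C^0_\h\stkquant_X$ is locally connected by isomorphisms, hence an algebroid by Lemma~\ref{lem:LiaAlg}, and $J$ presents it as the algebroid associated with
\[
\bigl(\stkpw,\ \q\mapsto C^0_\h\shquant,\ \ell^{C}\bigr),
\qquad \ell^{C}_\q(\psi(\mu,b))=b\,e^{\mu\h^{-1}},
\]
where $b\,e^{\mu\h^{-1}}\in C^0_\h\shquant$ since $[\h,b]=0$ and $[\h,e^{\mu\h^{-1}}]=0$. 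I would then define, for each $\q$, the $\C$-algebra isomorphism
\[
\Theta_\q\colon \shw\tens[\C]\GRGa_U\isoto C^0_\h\shquant,
\qquad a\tens e^{\lambda\h^{-1}}\mapsto a\,e^{\lambda\h^{-1}},
\]
coming from \S\ref{sse:Q}, and verify that it intertwines the two sets of data: on liftings, $\Theta_\q(b\tens e^{\mu\h^{-1}})=b\,e^{\mu\h^{-1}}$ matches $\ell^{\otimes}$ with $\ell^{C}$; on morphisms, $\Theta$ carries $(\oim\rho f)\tens\id$ to $C^0_\h\oim\rho f$ because $\Phi_\stkquant(\chi,f)$ sends $e^{\lambda\h'^{-1}}$ to $e^{\lambda\h^{-1}}$ (again using $f(\h')=\h$). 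Being compatible with restrictions and with the $\hfield\tens[\C]\GRGa$-structures, $\Theta$ induces the desired equivalence.

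I expect the main obstacle to lie in the left-hand side: justifying rigorously that presenting $\GRGa_\omega$ through the non-full, non-faithful forgetful functor $F$ reproduces $\GRGa_\omega$, and that the tensor product of algebroids may be computed data-wise on the common gerbe $\stkpw$. By contrast, the right-hand side reduces to the single identity $f(\h')=\h$, which simultaneously yields the factorization of $J$ through the centralizer and the compatibility of $\Theta$ with morphisms.
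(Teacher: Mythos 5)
Your proposal is correct and is essentially the paper's proof: the paper simply writes down the explicit functor $C^0_\h\stkquant_X \to \stkw_X \tens[\C_X] \GRGa_\omega$ sending $(\rho,\she,*,\h)\mapsto\bigl((\rho,\she,*,\h),\rho\bigr)$ and $[ae^{\lambda\h^{-1}},(\chi,f)]\mapsto[a,(\chi,f)]\otimes[e^{\lambda\h^{-1}},\chi]$, and observes that it is an equivalence because the question is local and locally reduces to the algebra isomorphism $C^0_\h\shquant\simeq\shw\tens[\C]\GRGa$ --- exactly the same local isomorphism and the same matching of liftings $b\,e^{\mu\h^{-1}}\leftrightarrow b\otimes e^{\mu\h^{-1}}$ that drive your argument. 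Your systematic reduction of both sides to algebroid data on the common gerbe $\stkpw$ (transporting $\GRGa_\omega$ along the forgetful functor and computing the tensor product data-wise) is a legitimate and more explicit unpacking of the paper's terse ``this is a local problem'' step, not a different route.
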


\begin{proof}
  Consider the functor $\psi\colon C^0_\h\stkquant_X \to \stkw_X
  \tens[\C_X] \GRGa_\omega$ defined by
  \[
  (\rho,\she,*,\h) \mapsto \bigl( (\rho,\she,*,\h), \rho \bigr), \quad
  [ae^{\lambda\h^{-1}},(\chi,f)] \mapsto [a,(\chi,f)] \otimes
  [e^{\lambda\h^{-1}},\chi]
  \]
  on objects and morphisms, respectively.  Since $a\in C^0_\h\she$,
  $\psi$ is indeed compatible with composition of morphisms. To show
  that $\psi$ is an equivalence is a local problem, and thus follows
  from the isomorphism of the representative algebras
  $C^0_\h\shquant\simeq\shw\tens[\C]\GRGa$.
\end{proof}

In particular, $\stkw_X$ is equivalent to the homogeneous component of
degree zero in
\[
C^0_\h\stkquant_X \tens[\GRGa_X]\GRGa_{-\omega} \simeq \stkw_X
\tens[\C] \bigl(\DSum_{\lambda\in\Ga} \C\,e^{\lambda\h^{-1}}\bigr).
\]
Recall that $\GRGa_\omega \simeq \GRGa_X$ if $X$ admits a
contactification.

\section{Quantization modules}\label{se:mod}

Here, after establishing some algebraic properties of quantization
algebras, we show how the category $\stack{RH}_X$ of regular holonomic microdifferential modules can be embedded in the category of  quantization modules.

\subsection{A coherence criterion}

Let us state a non-commutative version of Hilbert's basis
theorem.  For a sheaf of rings $\sha$ on a topological space, consider
the sheaf of rings $\sha\langle S\rangle \simeq \sha \tens[\Z] \Z[S]$ of polynomials in a variable
$S$ which is not central but satisfies the rule
\[
Sa = \varphi(a) S + \psi(a), \quad \forall a\in\sha,
\]
where $\varphi$ is an automorphism of $\sha$ and $\psi$ is a
$\varphi$-twisted derivation, i.e.\ a linear map such that $\psi(ab) =
\psi(a)b + \varphi(a) \psi(b)$. The following result can be proved
along the same lines as \cite[Theorem~A.26]{Kas03}.

\begin{theorem}\label{thm:At}
  If $\sha$ is Noetherian, then $\sha\langle S\rangle$ is Noetherian.
\end{theorem}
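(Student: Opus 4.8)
The ring $\sha\langle S\rangle$ is an Ore extension (skew polynomial ring) $\sha[S;\varphi,\psi]$: every section is uniquely a finite sum $f=\sum_i a_i S^i$ with $a_i\in\sha$, and the relation $Sa=\varphi(a)S+\psi(a)$ lets one move $S$ past scalars. The plan is to adapt the proof of Hilbert's basis theorem. The only feature of the product I shall use is that, since $\varphi$ is an automorphism and $\psi$ lowers degree, left multiplication by $S$ raises the degree of $f$ by one and replaces its leading coefficient $a_n$ by $\varphi(a_n)$, the twisting term $\psi$ affecting only coefficients of lower degree. As $\sha$ is left and right Noetherian, and as passing to the opposite ring turns $\sha[S;\varphi,\psi]$ into an Ore extension of $\sha^\op$ with the automorphism $\varphi^{-1}$, it is enough to treat left ideals; the right-handed statement, and the sheaf-theoretic version, then follow along the very same lines.

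So let $I\subset\sha\langle S\rangle$ be a left ideal. For $n\geq 0$ let $L_n\subset\sha$ be the set of leading coefficients of the degree-$n$ elements of $I$, together with $0$. Writing coefficients on the left and using that $b\cdot(a_nS^n+\cdots)=(ba_n)S^n+\cdots$ for $b\in\sha$, one checks at once that each $L_n$ is a left ideal of $\sha$. Multiplying a degree-$n$ element of $I$ on the left by $S$ produces a degree-$(n+1)$ element whose leading coefficient is the $\varphi$-image of the original one; hence $\varphi(L_n)\subseteq L_{n+1}$.

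The crux of the argument, and the main obstacle, is that the ideals $L_n$ need not form an ascending chain because of this twist; this is also the only place where the hypothesis that $\varphi$ be an \emph{automorphism}, rather than a mere endomorphism, enters. To remedy it, set $M_n=\varphi^{-n}(L_n)$. Applying $\varphi^{-(n+1)}$ to the inclusion $\varphi(L_n)\subseteq L_{n+1}$ gives $M_n\subseteq M_{n+1}$, so that $M_0\subseteq M_1\subseteq\cdots$ is now an honest ascending chain of left ideals of $\sha$. Since $\sha$ is left Noetherian this chain stabilizes: there is $N$ with $M_n=M_N$, equivalently $L_n=\varphi^{\,n-N}(L_N)$, for all $n\geq N$, and each of the finitely many ideals $L_0,\dots,L_N$ is finitely generated.

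It remains to lift generators. For each $n\leq N$ choose finitely many $g_{n,j}\in I$ of degree $n$ whose leading coefficients $c_{n,j}$ generate $L_n$; I claim the finite family $\{g_{n,j}\}$ generates $I$ as a left ideal. Given $f\in I$ of degree $d$ with leading coefficient $a_d$, I reduce its degree by downward induction. If $d\leq N$, then $a_d\in L_d$, so $a_d=\sum_j r_j c_{d,j}$ for some $r_j\in\sha$ and $f-\sum_j r_j g_{d,j}\in I$ has degree $<d$. If $d>N$, then $a_d\in L_d=\varphi^{\,d-N}(L_N)$, so $\varphi^{-(d-N)}(a_d)=\sum_j s_j c_{N,j}$ for some $s_j\in\sha$; since $S^{d-N}g_{N,j}$ has degree $d$ and leading coefficient $\varphi^{\,d-N}(c_{N,j})$, the element $f-\sum_j\varphi^{\,d-N}(s_j)\,S^{d-N}g_{N,j}\in I$ again has degree $<d$, because $\sum_j\varphi^{\,d-N}(s_j)\varphi^{\,d-N}(c_{N,j})=\varphi^{\,d-N}\bigl(\varphi^{-(d-N)}(a_d)\bigr)=a_d$. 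Iterating, $f$ is reduced to $0$, exhibiting it as a left $\sha\langle S\rangle$-combination of the $g_{n,j}$. Hence every left ideal of $\sha\langle S\rangle$ is finitely generated, $\sha\langle S\rangle$ is left Noetherian, and, as explained above, Noetherian.
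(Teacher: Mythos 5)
Your argument is the classical Hilbert basis theorem for Ore extensions, and its algebraic core is correct: the one obstruction in the twisted setting, namely that the leading-coefficient ideals satisfy $\varphi(L_n)\subseteq L_{n+1}$ rather than $L_n\subseteq L_{n+1}$, is exactly neutralized by passing to $M_n=\varphi^{-n}(L_n)$, and this is the only place where invertibility of $\varphi$ enters (both here and in identifying $\sha\langle S\rangle^{\op}$ with an Ore extension of $\sha^{\op}$ by the automorphism $\varphi^{-1}$ and the twisted derivation $-\psi\comp\varphi^{-1}$). The paper itself gives no proof, deferring to \cite[Theorem~A.26]{Kas03}, so your write-up supplies combinatorics that the paper leaves implicit. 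The one point you pass over too quickly is the closing clause that ``the sheaf-theoretic version follows along the very same lines'': here $\sha$ is a \emph{sheaf} of rings, and Noetherianity is meant in the sense of the appendix of \cite{Kas03}, which demands not only the ascending chain condition on ideals of sections or stalks, but also that $\sha\langle S\rangle$ be coherent and that filtrant families of coherent submodules of a coherent module be locally stationary. These stronger properties are what the paper actually uses afterwards (e.g.\ ``Noetherian, in particular coherent'' in the proof of Theorem~\ref{thm:coh}), and they do not follow formally from the ring-theoretic ACC you establish. They are obtained by rerunning your leading-coefficient device at the level of coherent subsheaves: a filtrant family of coherent $\sha\langle S\rangle$-submodules is controlled, degree by degree, by the coherent $\sha$-submodules of leading coefficients twisted by $\varphi^{-n}$, and local stationarity is then inherited from $\sha$. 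So the key idea is right and complete for rings; to match the statement as it is used in the paper, that last sheaf-theoretic step should be spelled out rather than asserted.
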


\subsection{Algebraic properties of quantization algebras}

As the results in the rest of this section are of a local nature, we will consider
the geometrical situation of Example~\ref{ex:PMC}. In particular, for
$(t;\tau)$ the symplectic coordinates of $T^*\C$, we consider the
projection
\[
P^*(M\times\C) \supset Y = \{\tau\neq 0\}\to[\rho] T^*M = X.
\]
For $\h = \partial_t^{-1}$, we set
\[
\she = \she_{M\times \C}|_{\tau\neq 0}, \quad \she_{[\rho]} =
C_\h^\infty\oim\rho\she, \quad \shw = C_\h^0\oim\rho\she,
\quad \shquant = \DSum_{\lambda\in\Ga} \she_{[\rho]} e^{\lambda\h^{-1}}.
\]

\begin{theorem}
  The ring $\she_{[\rho]}$ is Noetherian.
\end{theorem}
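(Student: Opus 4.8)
The plan is to exhibit $\she_{[\rho]}$ as an Ore extension of the deformation-quantization algebra $\shw = C_\h^0\oim\rho\she$ and then to invoke the non-commutative Hilbert basis Theorem~\ref{thm:At}. Granting that $\shw$ is Noetherian, it suffices to produce an element $t\in\she_{[\rho]}$, a $\C$-algebra automorphism $\varphi$ of $\shw$ and a $\varphi$-twisted derivation $\delta$ of $\shw$ such that $\she_{[\rho]}$ is free as a left $\shw$-module on the powers of $t$ and $ta = \varphi(a)\,t + \delta(a)$ for all $a\in\shw$. Theorem~\ref{thm:At} then identifies $\she_{[\rho]}\simeq\shw\langle t\rangle$ as a Noetherian ring.

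First I would take $t$ to be the operator of multiplication by the coordinate $t$, whose total symbol is $t$, of order $0$. A direct computation gives $\ad(\h)(t)=[\partial_t^{-1},t]=-\h^2$ and hence $\ad(\h)^2(t)=0$, so that $t\in C_\h^\infty\oim\rho\she=\she_{[\rho]}$. Next I would check that $\she_{[\rho]}$ is a free left $\shw$-module with basis $\{t^j\}_{j\in\N}$. Filtering $\she_{[\rho]}$ by the degree in $t$ of the total symbol, an element of $t$-degree $d$ has leading part $a_d(x,\xi,\tau)\,t^d$ with $a_d$ independent of $t$; letting $b_d\in\shw$ be the operator whose total symbol is $a_d$ (the growth conditions being inherited), the product $b_d\,t^d$ has the same leading part, so that $a-b_d\,t^d$ has strictly smaller $t$-degree. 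An induction yields generation, and freeness follows by reading off the top $t$-degree coefficient.

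The commutation relation is then pinned down by the symbol calculus. Working in the coordinates $(x,t;\xi,\tau)$, Leibniz rule gives, for $a\in\shw$ with total symbol $A=A(x,\xi,\tau)$, that $ta$ has total symbol $tA$ whereas $at$ has total symbol $tA+\partial_\tau A$; hence $ta=at+[t,a]$ with $[t,a]$ of total symbol $-\partial_\tau A$, independent of $t$. Since $at$ is the basis element $t$ with left coefficient $a$, this is exactly the normal form with $\varphi=\id$ and $\delta=\ad(t)$. That $\delta$ maps $\shw$ into $\shw$, i.e.\ that $[t,a]$ again commutes with $\h$, follows from the Jacobi identity $[\h,[t,a]]=[[\h,t],a]+[t,[\h,a]]=[-\h^2,a]=0$, using $a\in C_\h^0$. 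As $\varphi=\id$, the map $\delta$ is an ordinary derivation, and the relation $ta=at+\delta(a)$ together with the freeness above identifies $\she_{[\rho]}$ with $\shw\langle t\rangle$, whence Theorem~\ref{thm:At} applies.

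It remains to prove that $\shw$ is Noetherian, and I would argue exactly as in the classical proof of Noetherianity of $\she_M$: the order filtration on $\shw$ has associated graded ring $\gr\shw\simeq\sho_{T^*M}[\tau,\tau^{-1}]$, the Laurent polynomial ring in $\tau=\sigma_1(\partial_t)$ over $\sho_{T^*M}$, whose stalks are Noetherian since those of $\sho_{T^*M}$ are convergent power series rings. A Zariskian (good) filtered ring with Noetherian associated graded is stalkwise Noetherian, giving the claim. I expect this last step, the Noetherianity of the base $\shw$, to be the main obstacle, the Ore-extension bookkeeping being routine once the symbol computation above is in hand.
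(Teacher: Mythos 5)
Your proposal is correct and follows essentially the same route as the paper: the paper's proof is exactly the identification $\shw\langle S\rangle \isoto \she_{[\rho]}$, $S\mapsto t$, followed by an appeal to Theorem~\ref{thm:At}, with the Noetherianity of $\shw$ deferred to the filtered-ring arguments of the appendix of \cite{Kas03} (the associated-graded computation you sketch). You have merely filled in the details the paper leaves implicit, namely the verification that $t\in\she_{[\rho]}$, the freeness over $\shw$, and the Ore relation with $\varphi=\id$ and $\delta=\ad(t)$.
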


\begin{proof}
  Note that there is an isomorphism $\shw\langle S\rangle \isoto
  \she_{[\rho]}$ given by $S\mapsto t$. Using the results of
  \cite[Appendix]{Kas03}, one proves that $\shw$ is Noetherian. Then
  $\she_{[\rho]}$ is also Noetherian by Theorem~\ref{thm:At}.
\end{proof}

\begin{theorem}\label{thm:coh}
  The sheaves of rings $\shquant$ and $C^0_\h\shquant$ are coherent.
\end{theorem}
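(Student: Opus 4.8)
The plan is to realize each of $\shquant$ and $C^0_\h\shquant$ as a filtered colimit of \emph{Noetherian} sheaves of rings along \emph{flat} transition morphisms, and then to deduce coherence from the following general principle. Suppose a sheaf of rings $\sha$ is written as a filtered colimit $\sha \simeq \ilim[i]\sha_i$ of subrings $\sha_i$ in which every $\sha_i$ is left Noetherian and every transition morphism $\sha_i\to\sha_j$ (for $i\leq j$) is flat. Then $\sha$ is flat over each $\sha_i$, being a filtered colimit of the flat $\sha_j$'s for $j\geq i$, and I claim $\sha$ is left coherent. Indeed, given any morphism of left modules $\varphi\colon\sha^n\to\sha$, its defining sections locally come from some $\sha_i$, so that locally $\varphi = \sha\tens[\sha_i]\varphi_i$ for a morphism $\varphi_i\colon\sha_i^n\to\sha_i$; by flatness $\ker\varphi \simeq \sha\tens[\sha_i]\ker\varphi_i$, and $\ker\varphi_i$ is of finite type since $\sha_i$ is Noetherian, whence $\ker\varphi$ is of finite type. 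The symmetric argument gives coherence. I would index the colimit by the directed set of finitely generated subgroups $G\subset\Ga = \C$; this set is filtrant and $\bigcup_G G = \C$, and each such $G$ is free abelian of finite rank.

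For $\shquant$, I would set $\shquant_G = \DSum_{\lambda\in G}\she_{[\rho]}\,e^{\lambda\h^{-1}}$, so that $\ilim[G]\shquant_G = \shquant$. Choosing a basis $\lambda_1,\dots,\lambda_r$ of $G$ and writing $\sigma_i = \Ad(e^{\lambda_i\h^{-1}})$, the ring $\shquant_G$ is the iterated skew Laurent extension $\she_{[\rho]}[s_1^{\pm 1};\sigma_1]\cdots[s_r^{\pm 1};\sigma_r]$, with $s_i$ representing $e^{\lambda_i\h^{-1}}$; here the $\sigma_i$ commute and fix the previously adjoined variables, since the elements $e^{\lambda\h^{-1}}$ commute among themselves. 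As $\she_{[\rho]}$ is Noetherian, each skew polynomial ring $\she_{[\rho]}[s;\sigma]$ is Noetherian by Theorem~\ref{thm:At} (the case of vanishing twisted derivation), and the corresponding skew Laurent ring, being its Ore localization at the powers of $s$, is Noetherian as well; iterating $r$ times shows $\shquant_G$ is Noetherian. Moreover, for $G\subset G'$ one has $\shquant_{G'} = \DSum_{\bar\mu\in G'/G}e^{\mu\h^{-1}}\shquant_G$, so that $\shquant_{G'}$ is free, hence flat, as a $\shquant_G$-module. The principle above then yields coherence of $\shquant$.

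For $C^0_\h\shquant$, I would instead use the isomorphism $C^0_\h\shquant\simeq\shw\tens[\C]\GRGa$ recalled above, where $\shw$ is Noetherian and $\GRGa = \C[\C]$ is the group algebra of the additive group $\C$. Writing $\GRGa = \ilim[G]\C[G]$ with $\C[G]\simeq\C[\Z^r]$ a Laurent polynomial ring, I obtain $C^0_\h\shquant\simeq\ilim[G]\bigl(\shw\tens[\C]\C[G]\bigr)$, a filtered colimit of Laurent polynomial rings $\shw[s_1^{\pm 1},\dots,s_r^{\pm 1}]$ over $\shw$; no twist occurs here because $\Ad(e^{\lambda\h^{-1}})$ acts trivially on $\shw$. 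Each such ring is Noetherian by the Hilbert basis theorem together with localization, the transition morphisms are again free, hence flat, and the colimit is $C^0_\h\shquant$. Coherence of $C^0_\h\shquant$ follows.

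The step I expect to be the \emph{main obstacle} is verifying that each finite-rank piece $\shquant_G$ is genuinely Noetherian: this rests on the skew Hilbert basis theorem (Theorem~\ref{thm:At}) applied over a noncommutative base, combined with the stability of Noetherianity under the Ore localization passing from skew polynomial to skew Laurent rings, and requires checking that the commuting automorphisms $\sigma_i$ extend compatibly through the iterated construction. By comparison, the passage from the algebraic statements to sheaves of rings is routine, since coherence and the colimit presentations may be checked locally, and the flatness of the transition morphisms reduces to the coset decompositions $\shquant_{G'} = \DSum_{\bar\mu\in G'/G}e^{\mu\h^{-1}}\shquant_G$.
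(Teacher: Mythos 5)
Your proof is correct and follows essentially the same route as the paper's: the same exhaustion of $\shquant$ by the subrings $\DSum_{\lambda\in G}\she_{[\rho]}e^{\lambda\h^{-1}}$ over finitely generated subgroups $G\subset\Ga$, Noetherianity of each piece via the noncommutative Hilbert basis theorem (Theorem~\ref{thm:At}), and coherence at the filtered colimit using flatness of the transition maps. The only (welcome) refinements are your explicit handling of the negative powers of $e^{\lambda\h^{-1}}$ by Ore localization of the skew polynomial ring, and your spelled-out treatment of $C^0_\h\shquant$ through the isomorphism $C^0_\h\shquant\simeq\shw\tens[\C]\GRGa$, both of which the paper leaves implicit.
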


\begin{proof}
  We shall only consider $\shquant$, as the arguments for
  $C^0_\h\shquant$ are similar.

  For a finitely generated $\Z$-submodule $\Gamma$ of $\Ga$, set
  $\shquant_\Gamma = \DSum_{\lambda\in\Gamma} \she_{[\rho]}
  e^{\lambda\h^{-1}}$. By induction on the minimal number of
  generators of $\Gamma$ one proves that $\shquant_\Gamma$ is
  Noetherian. In fact, let $\Gamma = \Gamma_0 + \Z \lambda$ and assume
  that $\shquant_{\Gamma_0}$ is Noetherian. If $\Gamma \simeq \Gamma_0
  \dsum \Z \lambda$, then $\shquant_{\Gamma_0}\langle S\rangle \isoto
  \shquant_\Gamma$ by $S\mapsto e^{\lambda\h^{-1}}$. Hence
  $\shquant_{\Gamma_0}$ is Noetherian by
  Theorem~\ref{thm:At}. Otherwise, let $N$ be the smallest integer
  such that $n\lambda\in\Gamma_0$. Then $\shquant_\Gamma \simeq
  \shquant_{\Gamma_0}\langle S\rangle/S-e^{n\lambda\h^{-1}}$ is again
  Noetherian.

  As $\shquant_\Gamma$ is Noetherian, it is in particular coherent.
  Since the morphisms $\shquant_\Gamma \to \shquant_{\Gamma'}$ are
  flat for $\Gamma \subset \Gamma'$, coherence is preserved at the
  limit $\shquant \simeq \ilim[\Gamma] \shquant_\Gamma$.
\end{proof}

For $\shm\in\stkMod(\she_{[\rho]})$, let us set for short
\[
\rho^*_\she\shm = \she\tens[\opb\rho\she_{[\rho]}] \opb\rho\shm, \quad
\Supp(\shm) = \supp(\rho^*_\she\shm) \subset Y.
\]
Let us denote by $\stkMod[\rhof,\coh](\she_{[\rho]})$ the full abelian
substack of $\stkMod[\coh](\she_{[\rho]})$ whose objects $\shm$ are such
that $\rho$ is finite on $\Supp(\shm)$.
Let us denote by $\stkMod[\rhof,\coh](\she)$ the full abelian
substack of $\stkMod[\coh](\she)$ whose objects $\shn$ are such
that $\rho$ is finite on $\supp(\shn)$.

\begin{proposition}
\label{pro:SE}
\begin{itemize}
\item[(i)]
The ring $\she$ is flat over $\opb\rho\she_{[\rho]}$.
\item[(ii)]
There is an equivalence of categories
\[
\xymatrix{\stkMod[\rhof,\coh](\she_{[\rho]})
  \ar@<.5ex>[r]^{\rho^*_\she} & \oim\rho\stkMod[\rhof,\coh](\she)
  \ar@<.5ex>[l]^{\oim\rho}, }
\]
meaning that the functors $\rho^*_\she$ and $\oim\rho$ are
quasi-inverse to each other.
\end{itemize}
\end{proposition}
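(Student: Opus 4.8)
The plan is to establish (i) first and to use it throughout. Flatness of $\she$ over $\opb\rho\she_{[\rho]}$ is local on $Y$, so I would fix a point $y_0=(x_0,t_0)\in Y$ and prove that $\she_{y_0}$ is flat over the stalk $(\opb\rho\she_{[\rho]})_{y_0}=\she_{[\rho],x_0}$. Both rings carry the order filtration $\filt[k]{\she}$, compatible with the inclusion, exhaustive in $k$ and complete in the direction $k\to-\infty$ (an operator being a symbolically convergent series of terms of decreasing order). By the standard flatness criterion for such filtered rings it suffices to check that $\gr\she_{y_0}$ is flat over $\gr\she_{[\rho],x_0}$. Since passing to $\gr$ turns microdifferential operators into homogeneous holomorphic symbols, this reduces to the commutative statement that germs at $(x_0,t_0)$ of holomorphic functions of $(x,t)$ are flat over the subring of germs at $x_0$ that are polynomial in $t$. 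The latter ring is Noetherian, and the former is obtained from it by localizing at the maximal ideal and then completing analytically, both of which are flat; hence (i) follows.

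By (i) the functor $\rho^*_\she=\she\tens[\opb\rho\she_{[\rho]}]\opb\rho(-)$ is exact, and as extension of scalars it preserves local finite presentation, so it sends coherent $\she_{[\rho]}$-modules to coherent $\she$-modules. The $\rho$-finiteness of the support is built into the definition of $\Supp(\shm)$, so $\rho^*_\she$ indeed takes values in $\oim\rho\stkMod[\rhof,\coh](\she)$. For the opposite direction, given $\shn\in\stkMod[\rhof,\coh](\she)$ with $Z=\supp\shn$ finite over $X$, I would show that $\oim\rho\shn$ --- which is a module over $\oim\rho\she$, hence over its subsheaf $\she_{[\rho]}$ --- is coherent. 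Working over a small $U\ni x_0$, the tube $\opb\rho(U)$ meets $Z$ in finitely many sheets, so $\oim\rho\shn$ is a finite direct sum of the contributions of these sheets; coherence over $\she_{[\rho]}$ then follows from the Noetherianity of $\she_{[\rho]}$ established above, together with the fact that the finitely many sheets can be separated by a polynomial in $t\in\she_{[\rho]}$.

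Finally, $(\rho^*_\she,\oim\rho)$ is an adjoint pair, obtained by composing the extension-of-scalars adjunction with the $(\opb\rho,\oim\rho)$ adjunction, so it remains to prove that the unit $\shm\to\oim\rho\rho^*_\she\shm$ and the counit $\rho^*_\she\oim\rho\shn\to\shn$ are isomorphisms. Both are local, and I would check them stalkwise using the decomposition along the finitely many support points in each fiber: since $\rho^*_\she\shm$ is supported on the $X$-finite set $\Supp(\shm)$, it splits as a direct sum of sheaves concentrated near the distinct points $(x_0,t_i)$, and taking $\oim\rho$ reassembles exactly the polynomial-in-$t$ module $\shm$; dually, the finiteness of $Z$ lets one recover $\shn$ near $(x_0,t_0)$ from its direct image by relocalizing in the $t$-direction. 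I expect the crux of the argument to be precisely this matching --- the coherence of $\oim\rho\shn$ and the verification that the polynomial-in-$t$ description carried by $\she_{[\rho]}$ and the analytic-in-$t$ description carried by $\she$ contain the same data once the support is finite over $X$. This is where the finiteness hypothesis is indispensable, since it furnishes the Chinese-remainder-type splitting along the fiber and guarantees that no information is lost in the $t$-direction when passing between the two pictures.
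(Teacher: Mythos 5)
Your reduction target for (i) --- flatness of $\sho_Y$ over $\opb\rho\sho_X[t]$ --- is exactly the commutative statement (Proposition~\ref{pro:SEg}~(i)) that the paper also reduces to, but your reduction mechanism has a genuine gap. The order filtration on $\she_{[\rho]}=C^\infty_\h\oim\rho\she$ is \emph{not} complete in the direction $k\to-\infty$: a symbolically convergent series whose individual terms are polynomial in $t$ need not be polynomial in $t$ (the partial sums of $\sum_{j\geq 0}t^j\partial_t^{-j}$ all lie in $\filt[0]{\she_{[\rho]}}$, but the limit has total symbol $\sum_j t^j\tau^{-j}$ and lies only in $\filt[0]\she$). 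For the same reason $1+\h a$ is in general not invertible in $\filt[0]{\she_{[\rho]}}$, so this filtration is not Zariskian, and the ``standard flatness criterion'' ($\gr$-flat implies flat) cannot be quoted as stated: its proof requires the induced filtration on a finitely generated ideal of the \emph{source} ring to be a good filtration, which is precisely what completeness there would guarantee. The paper sidesteps this by working with the $\h$-adic filtration on $\filt[0]{\she_{[\rho]}}$, splitting a coherent module into its $\h$-torsion and $\h$-torsion-free parts, and applying Nakayama only on the $\filt[0]\she$ side, where completeness does hold. Your argument would need to be repaired along similar lines, or you would need to verify by hand that induced order-filtrations on ideals of $\she_{[\rho]}$ are good.

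For (ii), the ``finitely many sheets'' picture is the right intuition, but the two points you leave as assertions are the actual content of the proposition. Coherence of $\oim\rho\shn$ over $\she_{[\rho]}$ does not follow from Noetherianity of $\she_{[\rho]}$ together with ``separating the sheets by a polynomial in $t$'': what is needed is a Weierstrass-type division producing, for each generator $u$ of $\shn$, an annihilator $a=t^m+\sum_{i<m}b_it^i$ monic in $t$ with $b_i\in\filt[0]\shw$, whence finite generation of $\oim\rho\filt[0]\she\, u$ over $\filt[0]\shw$; this is the paper's steps (a)--(b) and its Lemma~\ref{lem:rhorho}, which establishes $\oim\rho(\she/\she a)\simeq\she_{[\rho]}/\she_{[\rho]}a$ and $\rho^*_\she(\she_{[\rho]}/\she_{[\rho]}a)\simeq\she/\she a$. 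Likewise, that the unit $\shm\to\oim\rho\rho^*_\she\shm$ is an isomorphism is not a formal consequence of the support decomposition: the paper first proves injectivity by a lattice argument reducing modulo $\filt[-1]\she$ to Proposition~\ref{pro:SEg}, and then gets surjectivity by presenting $\shm$ by the modules $\she_{[\rho]}/\she_{[\rho]}a_i$ and invoking Lemma~\ref{lem:rhorho} again. You correctly flag this matching between the polynomial-in-$t$ and holomorphic-in-$t$ pictures as ``the crux,'' but the division lemma that makes it work is exactly what is missing from your argument.
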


Let us set for short
\begin{equation}
	\label{eq:AB}
	\sha_{k} = \opb\rho\filt[k]\she_{[\rho]},
	\qquad 
	\shb_{k} = \filt[k]\she.
\end{equation}
Note that $\sha_{-k} = \h^k\sha_0 = \sha_0\h^k$, $\shb_{-k} = \h^k\shb_0 = \shb_0\h^k$ and
\[
\sha_0/\sha_{-1} \simeq \opb\rho
\sho_X[t], \quad \shb_0/\shb_{-1} \simeq \sho_Y.  
\]
The above proposition is a non commutative analogue of the following
classical result

\begin{proposition}
  \label{pro:SEg}
\begin{itemize}
\item[(i)]
The ring $\sho_Y$ is flat over $\opb\rho\sho_X[t]$.
\item[(ii)]
There is an equivalence of categories
\[
\xymatrix{\stkMod[\rhof,\coh](\sho_X[t]) \ar@<.5ex>[r]^{\rho^*}
  & \oim\rho\stkMod[\rhof,\coh](\sho_Y)
  \ar@<.5ex>[l]^{\oim\rho}.  }
\]
\end{itemize}
\end{proposition}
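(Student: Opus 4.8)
The plan is to prove both assertions locally on $X$, trivializing the contactification so that $Y\cong X\times\C$ with $\rho$ the first projection and $t$ the coordinate on the $\C$-factor; under this identification $\sho_X[t]$ is the subsheaf of $\oim\rho\sho_Y$ of germs depending polynomially on $t$. The whole statement is then governed by the Weierstrass preparation and division theorems, which compare polynomial and holomorphic dependence on $t$ along the $\rho$-finite locus.

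For (i), flatness is a stalkwise property, so I fix a point $(x_0,t_0)\in X\times\C$ and set $R=\sho_{X,x_0}$, a regular Noetherian local ring, and $B=\sho_{X\times\C,(x_0,t_0)}$. After translating $t_0$ to the origin, I would check that the local homomorphism $R[t]_{\mathfrak p}\to B$ is flat, where $\mathfrak p=\mathfrak m_B\inter R[t]=\mathfrak m_R[t]+(t)$. Here $R[t]_{\mathfrak p}$ is regular of dimension $\dim X+1$, $B$ is regular (hence Cohen--Macaulay) of the same dimension, and $\mathfrak p B=\mathfrak m_B$, so the closed fibre $B/\mathfrak p B\cong\C$ is $0$-dimensional; the equality $\dim B=\dim R[t]_{\mathfrak p}+\dim(B/\mathfrak p B)$ then yields flatness by the miracle-flatness criterion. (Equivalently one may use the fibrewise flatness criterion: $B$ is flat over $R$ because $\rho$ is a submersion, and the closed fibre $B/\mathfrak m_R B\cong\sho_{\C,0}$ is torsion-free, hence flat, over $R[t]/\mathfrak m_R[t]\cong\C[t]$.)

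For (ii), I would first note that both functors preserve the categories in question: $\rho^*$ is exact by (i), and since $\rho$ is finite on supports the direct image $\oim\rho$ is exact and preserves coherence. The core is a local reduction. Given $\shm\in\stkMod[\rhof,\coh](\sho_X[t])$, finiteness of $\rho$ on $\Supp(\shm)$ shows, by the finiteness theorem, that locally on $X$ the set $\Supp(\shm)$ lies in the zero locus of a polynomial $p\in\sho_X[t]$ monic in $t$; coherence then yields locally an $N$ with $p^N\shm=0$, so $\shm$ is a module over $\sho_X[t]/(p^N)$. Likewise every $\shn\in\stkMod[\rhof,\coh](\sho_Y)$ is locally a module over $\sho_Y/(p^N)$ for such a $p$. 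Weierstrass division now provides a ring isomorphism $\sho_X[t]/(p^N)\isoto\oim\rho\bigl(\sho_Y/(p^N)\bigr)$, both sides being free over $\sho_X$ of rank $N\deg_t p$ on the monomials in $t$. Because $\rho$ is finite on the support of these modules, $\oim\rho$ induces an equivalence between $\sho_Y/(p^N)$-modules and $\oim\rho(\sho_Y/(p^N))\cong\sho_X[t]/(p^N)$-modules, whose quasi-inverse is exactly $\rho^*$ (base change along the same isomorphism). Passing to the union over $N$ and over the monic polynomials $p$, and gluing the resulting local equivalences over $X$, gives the asserted equivalence with $\rho^*$ and $\oim\rho$ quasi-inverse.

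The main obstacle is the Weierstrass step in (ii): one must produce, locally and uniformly over $X$, the monic polynomial $p$ cutting out a thickening of the $\rho$-finite support, and verify that Weierstrass division gives the ring isomorphism $\sho_X[t]/(p^N)\isoto\oim\rho(\sho_Y/(p^N))$ intertwining $\rho^*$ and $\oim\rho$. By contrast the flatness in (i) is routine, and it is precisely what makes $\rho^*$ exact, so that the two functors are inverse equivalences of abelian categories rather than merely an adjoint pair.
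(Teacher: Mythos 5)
The paper offers no proof of this proposition: it is introduced with the sentence ``the above proposition is a non-commutative analogue of the following classical result'' and is used as a black box (its part (i) in step (a) of the proof of Proposition~\ref{pro:SE}~(i), its part (ii) in step (c) of the proof of Proposition~\ref{pro:SE}~(ii)). So there is nothing in the paper to compare your argument against, and it has to stand on its own. Your part (i) is correct and complete: miracle flatness applied to $R[t]_{\mathfrak p}\to B$ (or the fibrewise criterion) is exactly the right tool, and the dimension count checks out. The overall strategy for (ii) --- reduce to modules over $\sho_X[t]/(p^N)$ for $p$ monic, identify $\sho_X[t]/(p^N)\isoto\oim\rho(\sho_Y/(p^N))$ by Weierstrass division, and invoke the equivalence of module categories along a finite morphism --- is surely the intended classical argument.

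There is, however, one step on the $\sho_X[t]$ side that you assert but that does not follow from what precedes it. From $\Supp(\shm)=\supp(\rho^*\shm)\subset\{p=0\}$, R\"uckert's Nullstellensatz (applied to the coherent $\sho_Y$-module $\rho^*\shm$) together with properness gives $p^N\rho^*\shm=0$, i.e.\ $\rho^*(p^N\shm)=0$ --- not $p^N\shm=0$. To pass from the former to the latter you need to know that $\rho^*$ is faithful (conservative) on coherent $\sho_X[t]$-modules, i.e.\ that a nonzero coherent $\sho_X[t]$-module has nonzero analytic pullback. This is not formal: the ring $\sho_{X,x}[t]$ has maximal ideals that do not correspond to points of $U\times\C$ --- for instance $(xt-1)\subset\sho_{\C,0}[t]$ is maximal, with residue field $\C\{x\}[x^{-1}]$ --- so the Nullstellensatz for $\sho_{X,x}[t]$ relative to analytic points requires an argument (one can show that every maximal ideal of $\sho_{X,x}[t]$ still has nonempty zero locus in $U\times\C$ for every neighborhood $U$ of $x$, by analyzing the maximal ideals contracting to non-maximal primes of $\sho_{X,x}$; the hyperbola $\{xt=1\}$ above is the typical picture, and it is precisely the properness hypothesis in $\rhof$ that excludes such supports). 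The statement is true, so this is a gap rather than an error, but it is of essentially the same depth as the faithfulness of the equivalence you are proving and cannot be dismissed as ``coherence then yields $N$ with $p^N\shm=0$''. Your closing paragraph correctly identifies the Weierstrass step as the crux but locates the difficulty in producing $p$, whereas the real issue is transferring the annihilation statement from $\rho^*\shm$ back to $\shm$. Note that the noncommutative analogue, Proposition~\ref{pro:SE}, confronts exactly this point: step (c) of its proof is devoted to the injectivity of $\shm\to\oim\rho\rho^*_\she\shm$ before monic annihilators of the generators can be produced in step (d).
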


\begin{proof}[Proof of Proposition~\ref{pro:SE}~(i)]
With notations \eqref{eq:AB}, it is enough to show that $\shb_0$ is flat over $\sha_0$. Thus, for a coherent
$\sha_0$-module $\shm$, we have to prove that
\begin{equation}
	\label{eq:H-1AB}
H^{-1}(\shb_0\ltens[\sha_0]\shm) =
0.
\end{equation}
One says that $u\in\shm$ is an element of $\h$-torsion if $\h^Nu=0$ for some $N\geq 0$, i.e.\ if $\sha_{-N}u=0$. Denote by $\shm^{tor}\subset\shm$ the coherent submodule
of $\h$-torsion elements. One says that $\shm$ is an $\h$-torsion module if $\shm^{tor} = \shm$ and
that $\shm$ has no $\h$-torsion if $\shm^{tor} = 0$. Considering the exact
sequence \[ 0\to \shm^{tor} \to \shm \to \shm/\shm^{tor} \to 0, \] it is
enough to prove \eqref{eq:H-1AB} in the case where $\shm$ is either an $\h$-torsion
module or has no $\h$-torsion.

  \noindent\medskip(a) Assume that $\shm$ has no $\h$-torsion.  
Then	the multiplication map
\[
\sha_{-1}\tens[\sha_0]\shm \to \shm
\]
is injective. Setting $\shm_{-1}=\sha_{-1}\shm = \h\shm$, this implies the isomorphism
\[
(\sha_0/\sha_{-1})\tens[\sha_0]\shm \simeq \shm/\shm_{-1}.
\]
By Proposition~\ref{pro:SEg}~(i), we have
\[
H^{-1}((\shb_0/\shb_{-1})\ltens[\shb_0]\shb_0\ltens[\sha_0]\shm)
\simeq H^{-1}((\shb_0/\shb_{-1})\ltens[\sha_0/\sha_{-1}](\shm/\shm_{-1})) = 0.
\]
From the exact sequence $0\to\shb_{-1}\to\shb_0\to\shb_0/\shb_{-1}\to0$ we thus obtain the exact sequence
\[
\shb_{-1}\tens[\shb_0]H^{-1}(\shb_0\ltens[\sha_0]\shm)\to H^{-1}(\shb_0\ltens[\sha_0]\shm)\to 0.
\]
By Nakayama's lemma, we get $H^{-1}(\shb_0\ltens[\sha_0]\shm) = 0$.

  \noindent\medskip(b) Let $\shm$ be an $\h$-torsion module. As $\shm$ is coherent, there locally exists $N>0$ such that $\h^N\shm = 0$. Considering the exact sequence
\[
0 \to \shm_{-1} \to \shm \to \shm/\shm_{-1}\to 0,
\]
by induction on $N$ one reduces to the case $N=1$. Then $\shm = \shm/\shm_{-1}$ has a structure of $\sha_0/\sha_{-1}$-module. Hence
\[
\shb_0\ltens[\sha_0]\shm \simeq \shb_0\ltens[\sha_0]\sha_0/\sha_{-1}\ltens[\sha_0/\sha_{-1}]\shm \simeq \shb_0/\shb_{-1}\ltens[\sha_0/\sha_{-1}]\shm,
\]
and \eqref{eq:H-1AB} follows from Proposition~\ref{pro:SEg}~(i).
\end{proof}

We shall consider an operator
$a\in\filt[0]\she_{[\rho]}$ monic in the $t$ variable, i.e.\ an operator of
the form
\begin{equation}
\label{eq:Pfin}
a = t^m + \sum_{i=0}^{m-1} b_i t^i, \qquad m\in\N_{>0},\ b_i\in\filt[0]\shw.
\end{equation}

\begin{lemma}
  \label{lem:rhorho}
  Let $a$ be of the form \eqref{eq:Pfin}. Then there are isomorphisms
\[
\rho^*_\she(\she_{[\rho]}/\she_{[\rho]}a) \simeq \she/\she a, \quad
\oim\rho(\she/\she a) \simeq \she_{[\rho]}/\she_{[\rho]}a .
\]
\end{lemma}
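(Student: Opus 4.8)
The plan is to deduce both isomorphisms from the equivalence of Proposition~\ref{pro:SE}~(ii), by exhibiting $\she_{[\rho]}/\she_{[\rho]}a$ and $\she/\she a$ as corresponding objects under the quasi-inverse functors $\rho^*_\she$ and $\oim\rho$. The first isomorphism is essentially formal, the second is then obtained by applying $\oim\rho$ to it, and the only substantial point is checking that the modules in question lie in the subcategories where the equivalence applies.

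First I would establish the left isomorphism directly. Since $\opb\rho$ is exact and $\she\tens[\opb\rho\she_{[\rho]}](-)$ is right exact, the functor $\rho^*_\she$ is right exact, and $\rho^*_\she\she_{[\rho]} \simeq \she$. Applying $\rho^*_\she$ to the presentation of the cyclic module by right multiplication by $a$,
\[
\she_{[\rho]} \to[\cdot a] \she_{[\rho]} \to \she_{[\rho]}/\she_{[\rho]}a \to 0,
\]
yields
\[
\she \to[\cdot a] \she \to \rho^*_\she(\she_{[\rho]}/\she_{[\rho]}a) \to 0,
\]
the first map being right multiplication by the image of $a$ in $\she$. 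Hence $\rho^*_\she(\she_{[\rho]}/\she_{[\rho]}a) \simeq \she/\she a$.

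Next I would verify that $\she_{[\rho]}/\she_{[\rho]}a$ belongs to $\stkMod[\rhof,\coh](\she_{[\rho]})$. Coherence is immediate, as $\she_{[\rho]}$ is Noetherian. For the $\rho$-finiteness, the first isomorphism together with the definition $\Supp(\shm) = \supp(\rho^*_\she\shm)$ gives $\Supp(\she_{[\rho]}/\she_{[\rho]}a) = \supp(\she/\she a) \subset \{\sigma_0(a) = 0\}$. Here the monic form \eqref{eq:Pfin} is exactly what is needed: since each $b_i$ lies in $\filt[0]\shw$ its symbol is independent of $t$, so
\[
\sigma_0(a) = t^m + \sum_{i=0}^{m-1}\sigma_0(b_i)\, t^i
\]
is monic of degree $m$ in $t$. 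Over each fiber of $\rho$ the equation $\sigma_0(a)=0$ is then a monic degree-$m$ polynomial in $t$, with at most $m$ roots which stay bounded as the base point varies over a compact set; thus $\rho$ is proper with finite fibers on $\{\sigma_0(a)=0\}$, hence finite on the closed subset $\supp(\she/\she a)$.

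Finally, since $\rho^*_\she$ sends $\stkMod[\rhof,\coh](\she_{[\rho]})$ into $\stkMod[\rhof,\coh](\she)$, the first isomorphism shows $\she/\she a \in \stkMod[\rhof,\coh](\she)$ as well, so I may apply the quasi-inverse $\oim\rho$ of Proposition~\ref{pro:SE}~(ii):
\[
\oim\rho(\she/\she a) \simeq \oim\rho\rho^*_\she(\she_{[\rho]}/\she_{[\rho]}a) \simeq \she_{[\rho]}/\she_{[\rho]}a,
\]
which is the second isomorphism. The main obstacle is the $\rho$-finiteness of the support: everything else is formal once Proposition~\ref{pro:SE} is in hand, and the purpose of restricting to operators that are monic in $t$ is precisely to force the characteristic variety $\{\sigma_0(a)=0\}$ to be a degree-$m$ branched cover of $X$, so that $\oim\rho$ acts like a finite direct image.
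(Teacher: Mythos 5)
Your argument for the first isomorphism (right exactness of $\rho^*_\she$) and your verification that $\rho$ is finite on $\supp(\she/\she a)$ are both correct and both appear, implicitly or explicitly, in the paper. The problem is with the second isomorphism: you derive it by invoking the equivalence of Proposition~\ref{pro:SE}~(ii), but in the paper that equivalence is proved \emph{after}, and \emph{by means of}, Lemma~\ref{lem:rhorho}. In part (b) of the proof of Proposition~\ref{pro:SE}~(ii) one resolves an arbitrary $\shn\in\oim\rho\stkMod[\rhof,\coh](\she)$ by modules of the form $\she/\she a_i$ with $a_i$ as in \eqref{eq:Pfin}, and then applies Lemma~\ref{lem:rhorho} to identify $\oim\rho\shn$ and to obtain $\rho^*_\she\oim\rho\shn\isoto\shn$; the lemma is used again in part (d) to get $\shm\isoto\oim\rho\rho^*_\she\shm$. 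So your proof is circular: the cyclic modules $\she/\she a$ are precisely the test objects on which the adjunction morphisms must first be checked by hand before the equivalence can be established.

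The missing ingredient is a division theorem. The paper's actual argument runs: since $\rho$ is finite on $\supp(\she/\she a)$ one has $\oim\rho(\she/\she a)\simeq\oim\rho\she/\oim\rho\she a$; then, because $a$ is monic of degree $m$ in $t$ with coefficients in $\filt[0]\shw$, every $c\in\oim\rho\she$ can be divided as $c=da+b$ with $d\in\oim\rho\she$ and $b\in\she_{[\rho]}$ (a Sp\"ath--Weierstrass type division for microdifferential operators in the $t$ variable), and the assignment $c\mapsto b$ induces the isomorphism $\oim\rho\she/\oim\rho\she a\isoto\she_{[\rho]}/\she_{[\rho]}a$. This division step is the genuine content of the lemma and is what your proposal omits; it is also the real reason the hypothesis \eqref{eq:Pfin} is needed, beyond the support condition you correctly identified.
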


\begin{proof}
  The first isomorphism is clear. For the second, note that
  $\oim\rho(\she/\she a) \simeq \oim\rho\she/\oim\rho\she a$ since
  $\rho$ is finite on $\supp(\she/\she a)$. Note also that, by
  division, any $c \in \oim\rho\she$ can be written as $c = d a + b$
  with $d\in\oim\rho\she$ and $b\in\she_{[\rho]}$. Then the
  isomorphism $\oim\rho\she/\oim\rho\she a \isoto
  \she_{[\rho]}/\she_{[\rho]}a$ is given by $c \mapsto b$.
\end{proof}

\begin{proof}[Proof of Proposition~\ref{pro:SE}~(ii)]
  (a) Let $\shn_0$ be a coherent $\filt[0]\she$-module such that $\rho$ is finite on $\supp\shn_0$. We will show that $\shn_0$ is $\filt[0]\shw$-coherent. As this is a local problem on $Y$, we can assume that $(x_0,t;\xi_0,1)\in\supp\shn_0$ only for $t=0$.
	Thus $\supp\shn_0\subset\{t^p+\varphi(x,t,\xi/\tau)=0\}$ with $\varphi\in\sho_X[t]$ vanishing for $t=0$ and of degree less than $p$  in the $t$ variable. Choose a
	system $u_1,\dots,u_N$ of generators for $\shn_0$. By division, for
	each $i$ there exists $a_i$ of the form \eqref{eq:Pfin} such that
	$a_iu_i=0$. One thus gets an exact sequence
	\[
	0 \to \shn_0' \to \DSum_{i=1}^N \filt[0]\she/\filt[0]\she a_i \to \shn_0 \to 0.
	\]
	As $\filt[0]\she/\filt[0]\she a_i$ is $\filt[0]\shw$-coherent, $\shn_0$ is a finitely generated $\filt[0]\shw$-module. Since also $\shn_0'$ is finitely generated over $\filt[0]\shw$, it follows that $\shn_0$ is $\filt[0]\shw$-coherent.
	
	In particular, this shows that any $\shn\in\oim\rho\stkMod[\rhof,\coh](\she)$ is a coherent
	$\she_{[\rho]}$-module.
	
  (b) Let $\shn\in\oim\rho\stkMod[\rhof,\coh](\she)$ and choose a
  system $u_1,\dots,u_N\in\shn$ of generators. By (a), 
$\oim\rho\filt[0]\she u_i$ is $\filt[0]\shw$-coherent. Hence, $\{t^j\filt[0]\shw u_i\}_{j>0}$ is stationary in $\oim\rho\filt[0]\she u_i$, so that there exist $m_i> 0$ and $b_{ij}\in\filt[0]\shw$ such that $t^{m_i} u_i = \sum_{j<m_i} b_{ij} t^j u_i$. In other words,
for
  each $i$ there exists $a_i = t^{m_i} - \sum_j b_{ij} t^j$ of the form \eqref{eq:Pfin} such that
  $a_iu_i=0$. One thus gets an exact sequence
\[
0 \to \shn' \to \DSum_{i=1}^N \she/\she a_i \to \shn \to 0.
\]
Applying the same argument to $\shn'$ one gets a presentation
\[
\DSum_{i=1}^{N'} \she/\she a'_i \to \DSum_{i=1}^N \she/\she a_i \to
\shn \to 0.
\]
Since $\oim\rho = \eim\rho$ is exact on this sequence, by
Lemma~\ref{lem:rhorho} the module $\oim\rho\shn$ has the presentation
\[
\DSum_{i=1}^{N'} \she_{[\rho]}/\she_{[\rho]} a'_i \to \DSum_{i=1}^N
\she_{[\rho]}/\she_{[\rho]} a_i \to \oim\rho\shn \to 0.
\]
Applying the exact functor $\rho^*_\she$ and
using again Lemma~\ref{lem:rhorho}, we get that
$\rho^*_\she\oim\rho\shn \isoto \shn$.

\smallskip\noindent (c) For
$\shm\in\stkMod[\rhof,\coh](\she_{[\rho]})$, let us show that the map
$\shm\to\oim\rho\rho^*_\she\shm$ is injective. Let $\shm_0$ be a
lattice of $\shm$, that is a coherent
sub-$\filt[0]\she_{[\rho]}$-module such that $\she_{[\rho]}\shm_0 =
\shm$. Since $\rho^*_{\filt[0]\she}\shm_0$ is a lattice for
$\rho^*_\she\shm$, it is enough to prove the injectivity of the map
$\shm_0\to\oim\rho\rho^*_{\filt[0]\she}\shm_0$.  Assume that
$u\in\shm_0$ is sent to $0$.  By Proposition~\ref{pro:SEg} there are
isomorphisms
\[
\shm_0/\filt[-1]\she\shm_0 \isoto
\oim\rho\rho^*(\shm_0/\filt[-1]\she\shm_0) \simeq
\oim\rho\rho^*_{\filt[0]\she}\shm_0/\filt[-1]\she\oim\rho\rho^*_{\filt[0]\she}\shm_0.
\]
It follows that $u\in \filt[-1]\she\shm_0$. By induction we then get
$u\in\bigcap_{k>0}\filt[-k]\she\shm_0$, so that $u = 0$.

\smallskip\noindent (d) We finally have to prove the isomorphism
$\shm\isoto\oim\rho\rho^*_\she\shm$.  Let $u_1,\dots,u_N$ be a system
of generators of $\shm$. By the same arguments as in (b), for each $i$ there
exists $a_i$ of the form \eqref{eq:Pfin} such that $a_iu_i=0$ in
$\rho^*_\she\shm$. By (c) this implies $a_iu_i=0$ in $\shm$. As in
(b) we thus get a resolution
\[
\DSum_{i=1}^{N'} \she_{[\rho]}/\she_{[\rho]} a'_i \to \DSum_{i=1}^N
\she_{[\rho]}/\she_{[\rho]} a_i \to \shm \to 0,
\]
giving the isomorphism $\shm\isoto\oim\rho\rho^*_\she\shm$ by
Lemma~\ref{lem:rhorho}.
\end{proof}

For $S\subset Y$, let us denote by $\stkMod[S,\coh](\she_{[\rho]})$ the full
abelian substack of $\stkMod[\coh](\she_{[\rho]})$ whose objects $\shm$ are such
that $\Supp(\shm) \subset S$. 
For $T \subset X$, let us denote by $\stkMod[T,\coh](\shquant)$ the full
abelian substack of $\stkMod[\coh](\shquant)$ whose objects $\shm$ are such
that $\supp(\shm) \subset T$.

We set for short
\[
\shquant\shm = \shquant\tens[\she_{[\rho]}] \shm.
\]

\begin{proposition}\label{pr:SQ}
	\begin{itemize}
	\item[(i)]
	The ring $\shquant$ is faithfully flat over $\she_{[\rho]}$.
	\item[(ii)]
  Let $S\subset Y$ be an analytic subset such that $\rho|_S$ is proper
  and injective. Then the functor
\[
\shquant(\cdot)\colon \stkMod[S,\coh](\she_{[\rho]}) \to
\stkMod[\rho(S),\coh](\shquant_X)
\]
is fully faithful.
\end{itemize}
\end{proposition}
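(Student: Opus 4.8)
The plan is to work throughout with the explicit presentation $\shquant = \DSum_{\lambda\in\Ga}\she_{[\rho]}e^{\lambda\h^{-1}}$ and to reduce the statements to the geometry of the free $\Ga$-action on $Y$, invoking the equivalence of Proposition~\ref{pro:SE}. For part (i), I would first check that $\sigma_\lambda := \Ad(e^{\lambda\h^{-1}})$ restricts to an automorphism of $\she_{[\rho]} = C_\h^\infty\oim\rho\she$: since $e^{\lambda\h^{-1}}$ commutes with $\h$, the automorphism $\sigma_\lambda$ commutes with $\ad(\h)$ and hence preserves the condition $\ad(\h)^N(a)=0$. Using the commutation rule $e^{\lambda\h^{-1}}a = \sigma_\lambda(a)e^{\lambda\h^{-1}}$, each summand $\she_{[\rho]}e^{\lambda\h^{-1}}$ is free of rank one as a right $\she_{[\rho]}$-module, the map $c\mapsto \sigma_\lambda(c)e^{\lambda\h^{-1}}$ being a right $\she_{[\rho]}$-linear isomorphism $\she_{[\rho]}\isoto \she_{[\rho]}e^{\lambda\h^{-1}}$. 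Therefore $\shquant$ is a free, and in particular faithfully flat, right $\she_{[\rho]}$-module; this proves (i).

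For part (ii), I would first verify that the functor is well defined. A local finite presentation $\she_{[\rho]}^p\to\she_{[\rho]}^q\to\shm\to 0$ of a coherent module $\shm$ yields, after applying the flat functor $\shquant(\cdot)$, a finite presentation of $\shquant\shm$; since $\shquant$ is coherent by Theorem~\ref{thm:coh}, $\shquant\shm$ is coherent. Freeness from (i) gives, at each stalk, $(\shquant\shm)_x = 0 \iff \shm_x = 0$, so $\supp(\shquant\shm) = \supp\shm = \rho(\Supp\shm)\subseteq\rho(S)$, where the middle equality uses $\shm\simeq\oim\rho\rho^*_\she\shm$ and that $\rho$ is finite on $\Supp\shm$.

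The heart is full faithfulness. By the extension/restriction adjunction,
\[
\shHom[\shquant](\shquant\shm,\shquant\shm') \simeq \shHom[\she_{[\rho]}]\bigl(\shm, (\shquant\shm')|_{\she_{[\rho]}}\bigr) = \shHom[\she_{[\rho]}]\Bigl(\shm, \DSum_{\lambda\in\Ga}{}^{\sigma_\lambda}\shm'\Bigr),
\]
where ${}^{\sigma_\lambda}\shm' := \she_{[\rho]}e^{\lambda\h^{-1}}\tens[\she_{[\rho]}]\shm'$ is $\shm'$ with its $\she_{[\rho]}$-action twisted by $\sigma_\lambda$. Since $\shm$ is locally finitely presented over the coherent ring $\she_{[\rho]}$, the functor $\shHom[\she_{[\rho]}](\shm,\cdot)$ commutes with the direct sum, so the right-hand side becomes $\DSum_\lambda \shHom[\she_{[\rho]}](\shm,{}^{\sigma_\lambda}\shm')$. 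I would then check that the natural morphism $\shHom[\she_{[\rho]}](\shm,\shm')\to\shHom[\shquant](\shquant\shm,\shquant\shm')$ induced by the functor corresponds, under this identification, to the inclusion of the $\lambda=0$ summand (where $\sigma_0=\id$). Full faithfulness thus amounts to the vanishing of the summands with $\lambda\neq 0$.

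This vanishing is the main obstacle, and is precisely where the hypotheses on $S$ enter. The point is that $\sigma_\lambda$ lifts to the quantized contact transformation $\Ad(e^{\lambda\h^{-1}})\colon \oim{(T_{-\lambda})}\she\isoto\she$ over the translation $T_{-\lambda}$; hence, under the equivalence of Proposition~\ref{pro:SE}, the twisted module ${}^{\sigma_\lambda}\shm'$ corresponds to the $T_{-\lambda}$-translate of $\rho^*_\she\shm'$, so that $\Supp({}^{\sigma_\lambda}\shm') = T_{-\lambda}(\Supp\shm')\subseteq T_{-\lambda}(S)$. Because $\rho|_S$ is injective and the $\Ga$-action is free, the translates $T_{-\lambda}(S)$ and $S$ are disjoint for $\lambda\neq 0$. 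Transporting the computation to $Y$ via Proposition~\ref{pro:SE} (using that $\rho$ is finite on the relevant supports, so $\oim\rho$ is fully faithful), the term $\shHom[\she_{[\rho]}](\shm,{}^{\sigma_\lambda}\shm')$ becomes $\oim\rho\,\shHom[\she]\bigl(\rho^*_\she\shm,\rho^*_\she({}^{\sigma_\lambda}\shm')\bigr)$, a sheaf supported on $S\cap T_{-\lambda}(S)=\emptyset$, hence zero. This kills all summands with $\lambda\neq 0$ and leaves the $\lambda=0$ summand $\shHom[\she_{[\rho]}](\shm,\shm')$, so the natural map is an isomorphism. I expect the only delicate points to be the precise identification of ${}^{\sigma_\lambda}\shm'$ with a translate and the bookkeeping of $\oim\rho$-full-faithfulness, both routine consequences of Proposition~\ref{pro:SE} and the contact-transformation interpretation of $\Ad(e^{\lambda\h^{-1}})$.
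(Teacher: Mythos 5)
Your proposal is correct and follows essentially the same route as the paper: both reduce (i) to the freeness of $\shquant=\DSum_{\lambda}\she_{[\rho]}e^{\lambda\h^{-1}}$ over $\she_{[\rho]}$, and for (ii) both use adjunction to write $\Hom[\shquant](\shquant\shm,\shquant\shm')$ as a direct sum over $\lambda$ of Homs into the $\sigma_\lambda$-twisted modules, transport to $Y$ via Proposition~\ref{pro:SE}, and kill the $\lambda\neq0$ terms because $\Supp(e^{\lambda\h^{-1}}\shm')=T_\lambda\Supp(\shm')$ is disjoint from $\Supp(\shm)$ when $\rho|_S$ is injective. The only differences are expository (you spell out part (i) and the commutation of $\shHom(\shm,\cdot)$ with the direct sum, which the paper leaves implicit).
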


\begin{proof}
	(i) is straightforward.

\smallskip\noindent (ii) 
For a coherent $\she_{[\rho]}$-module $\shm$, there is an isomorphism of $\she_{[\rho]}$-modules
\[
\shquant \shm \simeq \DSum_{\lambda\in\Ga} e^{\lambda\h^{-1}} \shm.
\]
Here, the $\she_{[\rho]}$-module structure of $e^{\lambda\h^{-1}}
\shm$ is given by
\[ 
a (e^{\lambda\h^{-1}}\cdot b) =
e^{\lambda\h^{-1}}\cdot\Ad(e^{-\lambda\h^{-1}})(a)b,
\] 
for $a\in\she_{[\rho]}$ and $b\in\shm$. 
Note that
$\Supp(e^{\lambda\h^{-1}}\shm) = T_\lambda \Supp(\shm)$.

For $\shm,\shm' \in \stkMod[S,\coh](\she_{[\rho]})$, one has
\begin{align*}
  \Hom[\shquant](\shquant \shm',\shquant \shm) & \simeq
  \Hom[\she_{[\rho]}](\shm',\DSum_{\lambda\in\Ga} e^{\lambda\h^{-1}} \shm) \\
  & \simeq \DSum_{\lambda\in\Ga} \Hom[\she_{[\rho]}](\shm', e^{\lambda\h^{-1}} \shm) \\
  & \simeq \DSum_{\lambda\in\Ga} \Hom[\she](\rho^*_\she\shm',
  \rho^*_\she
  (e^{\lambda\h^{-1}}\shm)) \\
  & \simeq \Hom[\she](\rho^*_\she\shm', \rho^*_\she\shm) \\
  & \simeq \Hom[\she_{[\rho]}](\shm',\shm),
\end{align*}
where the second last isomorphism is due to the fact that
$\Supp(\shm') \cap \Supp(e^{\lambda\h^{-1}} \shm) = \emptyset$ for
$\lambda\neq 0$.
\end{proof}

\subsection{Induced modules}\label{sse:ind}

Assume that the symplectic manifold $X$ admits a
contactification $\rho = (Y \to[\rho] X,\alpha)$. In this section we
show how the constructions from the previous section can be
globalized.

\begin{definition}\label{def:Pr}
  For a contactification $\rho$ of $X$, the gerbe $\stkpr$ on $X$ is
  defined as follows.
  \begin{enumerate}
  \item Objects on $U\subset X$ are triples $\pq = (\she,\,*,\,\h)$ of
    a microdifferential algebra $\she$ on $\rho^{-1}(U)$, an
    anti-involution $*$ of $\she$ and a deformation parameter $\h$
    such that $\h^* = -\h$.

  \item If $\pq' = (\she',*',\h')$ is another object,
    \[
    \shHom[\stkpr](\pq',\pq) = \{f\in
    \shIso[\shr\text{-}\stkalg](\she', \she) \setdef f *' = * f,\
    f(\h') = \h \}.
    \]
  \end{enumerate}
\end{definition}

As a corollary of Lemma~\ref{lem:stkpe}, one has

\begin{lemma}
  For any $\pq = (\she,\,*,\,\h) \in \stkpr$ there is an isomorphism
  of sheaves of groups
  \[
  \psi_\rho\colon \{b \in \she^\times \setdef [\h,b]=0,\ b^*b=1,\
  \sigma_0(b)=1 \} \isoto \shEnd[\stkpr](\pq)
  \]
  given by $\psi_\rho(b) = \Ad(b)$.
\end{lemma}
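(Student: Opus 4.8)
The plan is to deduce the statement from Lemma~\ref{lem:stkpe} by comparing $\stkpr$ with the gerbe $\stkpe$ on $Y$. First I would observe that forgetting the deformation parameter defines a faithful functor from $\stkpr$ to (the push-forward by $\rho$ of) $\stkpe$, sending an object $\pq = (\she, *, \h)$ to $\p = (\she, *)$ and acting as the identity on morphisms. By Definition~\ref{def:Pr}, a morphism $f \in \shHom[\stkpr](\pq', \pq)$ is nothing but a morphism $f \in \shHom[\stkpe](\p', \p)$ which additionally satisfies $f(\h') = \h$. Consequently this functor identifies $\shEnd[\stkpr](\pq)$ with the subsheaf of groups
\[
\{f \in \shEnd[\stkpe](\p) \setdef f(\h) = \h\} \subset \oim\rho\shEnd[\stkpe](\p).
\]

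Next I would invoke Lemma~\ref{lem:stkpe}, which provides an isomorphism of sheaves of groups $\psi\colon \{b \in \she^\times \setdef b^*b = 1,\ \sigma_0(b) = 1\} \isoto \shEnd[\stkpe](\p)$ with $\psi(b) = \Ad(b)$. Under $\psi$ the constraint $f(\h) = \h$ reads $\Ad(b)(\h) = b\h b^{-1} = \h$, that is $[\h, b] = 0$. Therefore $\psi$ restricts to an isomorphism of the subsheaf $\{b \in \she^\times \setdef [\h, b] = 0,\ b^*b = 1,\ \sigma_0(b) = 1\}$ onto $\shEnd[\stkpr](\pq)$, and this restriction is exactly $\psi_\rho$. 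In the forward direction one checks that $\psi_\rho$ is well defined: for $b$ subject to the three conditions, $\Ad(b)$ already lies in $\shEnd[\stkpe](\p)$ by Lemma~\ref{lem:stkpe} and fixes $\h$ because $[\h, b] = 0$, hence $\Ad(b) \in \shEnd[\stkpr](\pq)$.

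The only bookkeeping point worth spelling out is that $\shEnd[\stkpr](\pq)$ is a sheaf on $X$ while the $b$'s a priori live on $\opb\rho(U) \subset Y$; this is harmless, since the condition $[\h, b] = 0$ is equivalent to $\Ad(e^{\lambda\h^{-1}})(b) = b$ for all $\lambda \in \Ga$, so such $b$ are invariant under the $\Ga$-action and descend along $\rho$. I do not expect any genuine obstacle, as the result is a direct specialization of Lemma~\ref{lem:stkpe}. It is nonetheless instructive to contrast the outcome with Lemma~\ref{lem:stkpw}: there the endomorphism group carries an extra factor $\Ga_U$ produced by the contactification automorphisms $T_\mu$ that the morphisms of $\stkpw$ allow, whereas in $\stkpr$ the contactification $\rho$ is fixed and the morphisms are algebra isomorphisms over the identity, so no such factor arises and one is left precisely with the $\h$-centralizing, $*$-preserving inner automorphisms.
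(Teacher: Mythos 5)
Your proposal is correct and is exactly the argument the paper intends: the lemma is stated there as an immediate corollary of Lemma~\ref{lem:stkpe}, obtained by observing that a morphism in $\stkpr$ is a morphism in $\stkpe$ that moreover fixes $\h$, and that under $\psi(b)=\Ad(b)$ the condition $f(\h)=\h$ translates into $[\h,b]=0$. The only quibble is your closing remark about descent: nothing needs to ``descend'' along $\rho$ here --- the relevant sheaf on $X$ is simply the direct image $\oim\rho$ of the subsheaf of units in question, as in Lemma~\ref{lem:stkpw} --- but this does not affect the argument.
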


\begin{definition}\label{def:Er}
  For a contactification $\rho$ of $X$, the stack $\stke_{[\rho]}$ is
  the $\C$-algebroid associated to the data
  $(\stkpr,\Phi_{\stke_{[\rho]}},\ell)$ where
  \[
  \Phi_{\stke_{[\rho]}}(\pq) = \she_{[\rho]}, \quad
  \Phi_{\stke_{[\rho]}}(f) = \oim\rho f, \quad \ell_{\pq}(g) = b,
  \]
  for $\pq = (\she,\,*,\,\h)$, $f\colon \pq' \to \pq$ and $g =
  \psi_\rho(b)$.
\end{definition}

Note that Proposition~\ref{pro:WQ} implies $\stkw_X\simeq
C^0_\h\stke_{[\rho]}$.

As in the local case, for $\shm\in\stkMod(\stke_{[\rho]})$ we set for
short
\[
\Supp(\shm) = \supp(\rho^*_\stke\shm) \subset Y.
\]
Consider the faithful $\C$-linear functors
\begin{align*}
  \opb\rho\stke_{[\rho]} &\to \stke_Y, & (\she,*,\h) &\mapsto
  (\she,*),
  &&\text{on objects}, \\
  && (a,f) & \mapsto (a,f), &&\text{on morphisms}, \\
  \stke_{[\rho]} &\to \stkquant_X, & (\she,*,\h) &\mapsto
  (\rho,\she,*,\h), &&\text{on objects}, \\
  &&(a,f) &\mapsto (ae^{0\h^{-1}},\id_\rho,f), &&\text{on morphisms}.
\end{align*}
For $S\subset Y$ they induce the functors
\begin{align*}
  \rho^*_\stke&\colon\stkMod[\rhof,\coh](\stke_{[\rho]}) \to
  \oim\rho\stkMod[\rhof,\coh](\stke_Y),\\
  \stkquant(\cdot)&\colon\stkMod[S,\coh](\stke_{[\rho]}) \to
  \stkMod[\rho(S),\coh](\stkquant_X).
\end{align*}

By Propositions~\ref{pro:SE} and \ref{pr:SQ} we have

\begin{proposition}\label{pro:SEglobal}
  (i) The functor $\rho^*_\stke$ is an equivalence.

  \smallskip\noindent (ii) Let $S\subset Y$ be an analytic subset such
  that $\rho|_S$ is proper and injective. Then $\stkquant(\cdot)$ is
  fully faithful.
\end{proposition}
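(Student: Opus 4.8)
The plan is to derive both assertions from their local counterparts, Propositions~\ref{pro:SE} and~\ref{pr:SQ}, by exploiting the fact that for functors between stacks of modules over algebroids, both ``being an equivalence'' and ``being fully faithful'' can be tested locally on $X$. Indeed, full faithfulness is a condition on the sheaves of morphisms and is therefore local; and a functor of stacks is an equivalence precisely when it is fully faithful and locally essentially surjective, the latter notion being meaningful because the target is a stack and hence satisfies descent. Thus it will be enough to verify (i) and (ii) after restricting to the members of a suitable open cover of $X$.

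First I would fix a global contactification $\rho=(Y\to[\rho]X,\alpha)$ as in the hypothesis and choose an open cover $\{U\}$ of $X$ fine enough that over each $U$ the gerbe $\stkpr$ admits an object $\pq=(\she,*,\h)$ and $U$ is, via a symplectic transformation, a Darboux chart of the form considered in Example~\ref{ex:PMC}. The point of the second requirement is that Propositions~\ref{pro:SE} and~\ref{pr:SQ}, although stated for that specific local model, apply to any such chart because the notions involved are local and invariant under contact transformations. Over such a $U$, the algebroid $\stke_{[\rho]}$ is represented by $\she_{[\rho]}=\Phi_{\stke_{[\rho]}}(\pq)$, so that by \S\ref{sse:gerbes} there is an equivalence $\stkMod(\stke_{[\rho]}|_U)\simeq\stkMod(\she_{[\rho]})$; similarly, over $\opb\rho U$ the object $(\she,*)$ of $\stkpe$ represents $\stke_Y$ by $\she=\Phi_\stke((\she,*))$, and over $U$ the object $\q=(\rho,\she,*,\h)$ of $\stkpw$ represents $\stkquant_X$ by $\shquant=\Phi_\stkquant(\q)$.

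Under these local equivalences, the defining formulas for the functors $\opb\rho\stke_{[\rho]}\to\stke_Y$ and $\stke_{[\rho]}\to\stkquant_X$ recorded just before the statement identify the global functor $\rho^*_\stke$ with the functor $\rho^*_\she$ of Proposition~\ref{pro:SE}(ii) and the global functor $\stkquant(\cdot)$ with the functor $\shquant(\cdot)$ of Proposition~\ref{pr:SQ}(ii). The conclusions then follow at once: locally $\rho^*_\she$ is an equivalence, with quasi-inverse $\oim\rho$, which yields both local full faithfulness and local essential surjectivity of $\rho^*_\stke$ and hence~(i); and locally $\shquant(\cdot)$ is fully faithful, which yields~(ii).

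The step that I expect to require the most care is checking that these local identifications are compatible with the restriction functors of the three algebroids, so that the local descriptions actually glue over intersections $U\cap U'$. This compatibility is forced by the construction, since $\opb\rho\stke_{[\rho]}\to\stke_Y$ and $\stke_{[\rho]}\to\stkquant_X$ are functors of algebroids induced from morphisms of the underlying algebroid data, and therefore commute with restriction, while the equivalences $\stkMod(\stka|_U)\simeq\stkMod(\sha_\pq)$ of \S\ref{sse:gerbes} are natural in $U$. Beyond this naturality bookkeeping I anticipate no further obstacle, the genuine analytic input---flatness of $\she$ over $\opb\rho\she_{[\rho]}$, the presentations by operators monic in $t$, and the faithful flatness of $\shquant$ over $\she_{[\rho]}$---having already been established in Propositions~\ref{pro:SE} and~\ref{pr:SQ}.
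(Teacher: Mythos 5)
Your proposal is correct and follows essentially the same route as the paper, which derives Proposition~\ref{pro:SEglobal} directly from Propositions~\ref{pro:SE} and~\ref{pr:SQ} without further comment; you have merely made explicit the localization argument (full faithfulness and local essential surjectivity are local conditions on stacks, and over a Darboux chart the algebroids are represented by the algebras appearing in the local statements) that the paper leaves implicit.
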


We can thus embed regular holonomic microdifferential modules in the
stack of coherent $\stkquant_X$-modules.

\begin{corollary}
  There is a fully faithful embedding
  \[
  \stack{RH}_X \subset \stkMod[\coh](\stkquant_X).
  \]
\end{corollary}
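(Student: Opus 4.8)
The plan is to build, for each closed Lagrangian subvariety $\Lambda \subset X$, a fully faithful functor $\stack{RH}_{X,\Lambda} \to \stkMod[\coh](\stkquant_X)$ and then to check that these functors are compatible with the transition functors of the inductive system, so that they induce a fully faithful functor on $\stack{RH}_X = \ilim[\Lambda]\stack{RH}_{X,\Lambda}$.

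Fix such a $\Lambda$. By Proposition~\ref{pro:Lambda} there are a neighborhood $U\supset\Lambda$, a contactification $\rho\colon V\to U$ and a closed Lagrangian $\Gamma\subset V$ with $\rho|_\Gamma$ a homeomorphism onto $\Lambda$; in particular $\rho|_\Gamma$ is proper and injective, so $\rho$ is finite on $\Gamma$. By definition $\stack{RH}_{X,\Lambda} = \oim\rho\stkMod[\Gamma,\reghol](\stke_V)$, and every regular holonomic object is coherent with support in $\Gamma$, hence $\rho$-finite coherent. I would therefore first invoke Proposition~\ref{pro:SEglobal}~(i), whose equivalence $\rho^*_\stke$ has quasi-inverse $\oim\rho$, to identify $\stack{RH}_{X,\Lambda}$ with a full substack of $\stkMod[\Gamma,\coh](\stke_{[\rho]})$ over $U$. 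I would then apply Proposition~\ref{pro:SEglobal}~(ii) with $S=\Gamma$: since $\rho|_\Gamma$ is proper and injective, the functor $\stkquant(\cdot)$ is fully faithful into $\stkMod[\Lambda,\coh](\stkquant_X)$ over $U$. As $\Lambda$ is closed in $X$ and contained in the open set $U$, objects supported on $\Lambda$ extend by zero to coherent $\stkquant_X$-modules on all of $X$, and this extension is fully faithful. Composing, I obtain the desired fully faithful functor $\stack{RH}_{X,\Lambda} \to \stkMod[\coh](\stkquant_X)$.

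It then remains to pass to the inductive limit. The main obstacle is verifying that this construction is compatible with the fully faithful transition functors $\stack{RH}_{X,\Lambda} \to \stack{RH}_{X,\Lambda'}$ for $\Lambda\subset\Lambda'$, and this is precisely where the uniqueness of the lift $(\rho,\Gamma)$ furnished by Proposition~\ref{pro:Lambda} enters: for $\Lambda\subset\Lambda'$ the contactification and Lagrangian attached to $\Lambda'$ restrict, over a neighborhood of $\Lambda$, to those attached to $\Lambda$, so that the identifications $\rho^*_\stke$ and the functors $\stkquant(\cdot)$ agree after restriction. Granting this, each object of $\stack{RH}_X$ arises from some $\stack{RH}_{X,\Lambda}$, and morphisms between two objects are computed in $\stack{RH}_{X,\Lambda'}$ for $\Lambda'$ containing both supporting Lagrangians; since every $\stack{RH}_{X,\Lambda'} \to \stkMod[\coh](\stkquant_X)$ is fully faithful and the transition functors $\stack{RH}_{X,\Lambda} \to \stack{RH}_{X,\Lambda'}$ are fully faithful, the induced functor on the direct limit is fully faithful as well, yielding the embedding $\stack{RH}_X \subset \stkMod[\coh](\stkquant_X)$.
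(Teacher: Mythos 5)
Your proposal is correct and follows essentially the same route as the paper: the corollary is obtained there as an immediate consequence of Proposition~\ref{pro:SEglobal}, applying (i) to identify $\stack{RH}_{X,\Lambda}=\oim\rho\stkMod[\Gamma,\reghol](\stke_V)$ with a full substack of $\stkMod[\Gamma,\coh](\stke_{[\rho]})$ and (ii) with $S=\Gamma$ to embed it fully faithfully into $\stkMod[\coh](\stkquant_X)$, the passage to the limit over $\Lambda$ resting on the same uniqueness of the lift $(\rho,\Gamma)$ that the paper already invokes to define $\stack{RH}_X$. You merely make explicit the local-to-global and extension-by-zero details that the paper leaves implicit.
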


\begin{remark}
  We do not know if the above result extends to give an embedding
  $\mathbf{DRH}(X)\subset\BDC_{\coh}(\stkquant_X)$ at the level of
  derived categories.
\end{remark}

\appendix

\section{Remarks on deformation-quantization}\label{se:dq}

We give in this appendix an alternative description of the deformation
quantization algebroid using triples $(\shw,*,v)$ of a
deformation-quantization algebra $\shw$ endowed with an
anti-involution $*$ and an order preserving $\C$-linear derivation
$v$. We also compare regular holonomic deformation-quantization
modules with regular holonomic quantization modules.

\subsection{Deformation-quantization and derivations}

Let $X = (X,\omega)$ be a complex contact manifold and $\shw$ a
deformation quantization algebra on $X$.

\begin{lemma}\label{lem:ChDer}
  Let $w$ be an order preserving $\hfield$-linear derivation of
  $\shw$. Then $w$ is locally of the form $\ad(\h^{-1}d)$ for some
  $d\in\filt[0]{\shw}$.
\end{lemma}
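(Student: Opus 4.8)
The plan is to work locally, where $\shw$ is the standard deformation-quantization algebra, and to compare $w$ with $\ad(\h^{-1}d)$ symbol by symbol, building $d=\sum_{n\geq 0}d_n$ with $d_n\in\filt[-n]\shw$ by descending induction on the order. First I observe that such a candidate is legitimate: since $\h$ is central in $\shw$ and $d\in\filt[0]\shw$, the element $\h^{-1}d$ lies in $\filt[1]\shw$, so $\ad(\h^{-1}d)$ is an order-preserving $\hfield$-linear derivation of $\shw$. Note also that $w(\h)=0$, as $\h\in\hfield$ and $w$ is $\hfield$-linear. Throughout I use the order filtration, the associated graded $\Gr\shw\simeq\sho_X[\h,\h^{-1}]$ (with $\h$ in degree $-1$ and $\filt[0]\shw/\filt[-1]\shw\simeq\sho_X$), and the fact that the commutator lowers order by one, inducing on $\Gr\shw$ a Poisson bracket which on $\sho_X$ equals $\h$ times the symplectic Poisson bracket.

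The heart of the argument is the leading step. Since $w$ is an order-preserving derivation, it induces a degree-$0$ derivation $\gr w$ of the commutative algebra $\Gr\shw$, which is $\C[\h,\h^{-1}]$-linear because $w(\h)=0$; hence $\gr w$ is determined by its restriction to $\sho_X$, a holomorphic vector field $v$. Passing to symbols in the identity $w([a,b])=[wa,b]+[a,wb]$ shows that $\gr w$ is a Poisson derivation of $\Gr\shw$, so $v$ preserves the symplectic Poisson bracket; equivalently $v$ is a symplectic vector field, i.e.\ $i_v\omega$ is closed. Shrinking $X$ so that $i_v\omega$ is exact, we get $v=H_f$ for some $f\in\sho_X$. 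A direct symbol computation then gives that, for any $d_0\in\filt[0]\shw$ with $\sigma_0(d_0)=f$, the derivation $\ad(\h^{-1}d_0)$ induces $H_f=v$ on $\sho_X$; being, like $\gr w$, a $\C[\h,\h^{-1}]$-linear derivation of $\Gr\shw$, it therefore has the same leading symbol as $w$ throughout. This is the key point: noncommutativity forces the leading symbol of $w$ to be Hamiltonian, which is exactly the class of vector fields realized by the inner derivations $\ad(\h^{-1}\,\cdot\,)$.

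Next I iterate. Set $w_1=w-\ad(\h^{-1}d_0)$; by construction $w_1$ lowers order by one. In general, suppose $w_n=w-\ad(\h^{-1}(d_0+\cdots+d_{n-1}))$ lowers order by $n$. Then $w_n$ is again an order-lowering $\hfield$-linear derivation, and the same symbol argument shows that its degree-$(-n)$ leading term is a $\C[\h,\h^{-1}]$-linear Poisson derivation of $\Gr\shw$; its restriction to $\sho_X$ has the form $\h^n v_n$ with $v_n$ a symplectic, hence locally Hamiltonian, vector field $v_n=H_{f_n}$. Choosing $d_n\in\filt[-n]\shw$ with $\sigma_{-n}(d_n)=\h^n f_n$ makes $\ad(\h^{-1}d_n)$ reproduce this leading term, so that $w_{n+1}=w_n-\ad(\h^{-1}d_n)$ lowers order by $n+1$.

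Finally, since $\filt[-n]\shw=\h^n\filt[0]\shw$ and $\filt[0]\shw$ is $\h$-adically complete, the series $d=\sum_{n\geq 0}d_n$ converges in $\filt[0]\shw$. For each $k$ and each $n$, the difference $w-\ad(\h^{-1}(d_0+\cdots+d_n))=w_{n+1}$ maps $\filt[k]\shw$ into $\filt[k-n-1]\shw$, so passing to the limit yields $w=\ad(\h^{-1}d)$, as required. I expect the only delicate point to be the leading step, namely verifying that the Poisson-derivation property forces the symbol of $w$ to be symplectic and that this symplectic field is exactly what $\ad(\h^{-1}d_0)$ realizes; the descent and the $\h$-adic convergence are then routine.
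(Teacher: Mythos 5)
Your reduction to the associated graded is carried out correctly: the leading symbol of an order-preserving $\hfield$-linear derivation is indeed a Poisson, hence symplectic, hence locally Hamiltonian vector field, and $\ad(\h^{-1}d_0)$ with $\sigma_0(d_0)=f$ does realize $H_f$ on $\Gr\shw$. The gap is in the last step: $\filt[0]{\shw}$ is \emph{not} $\h$-adically complete. Sections of $\filt[0]{\shw}$ are total symbols $\sum_{j\geq 0}a_j\h^j$ subject to growth conditions of microdifferential type (on each compact, $|a_j|\leq C^j j!$); the inverse limit $\plim_n\filt[0]{\shw}/\filt[-n]{\shw}$ is the formal algebra $\sho_X\forl$ with no growth condition, which strictly contains $\filt[0]{\shw}$ (e.g.\ $\sum_j (j!)^2\h^j$ lies in the completion but not in $\shw$). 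Your $d_n$ is built from a local primitive $f_n$ of $i_{v_n}\omega$, where $v_n$ depends on $w$ and on all previous choices, and nothing in the construction bounds the growth of the $f_n$ in $n$. So the series $\sum_n d_n$ need not define an element of $\filt[0]{\shw}$, and the proof as written only establishes the statement for the formal ($\h$-adically completed) deformation-quantization algebra.

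This is exactly the difficulty the paper's proof is designed to avoid. Working in quantized symplectic coordinates $(x;u)$, where $\ad(x_i)=-\h\partial_{u_i}$ and $\ad(u_i)=\h\partial_{x_i}$ hold exactly on total symbols, the paper solves for the potential at the level of operators in \emph{two} steps (a quantized Poincar\'e lemma: $[e_i,x_j]=[e_j,x_i]$ implies $e_i=[x_i,e]$, which amounts to taking a primitive in the $u$-variables degree by degree in $\h$ --- an operation that visibly preserves the growth conditions). After these finitely many corrections the residual derivation kills $x_i$ and $u_j$ \emph{exactly}, and then vanishes because $\Inter_k\filt[k]{\shw}=0$; only separatedness of the filtration is used, never completeness. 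To repair your argument you would either have to prove a uniform growth estimate on the $f_n$ (which your symbol-by-symbol induction does not provide), or restructure it so that the correction terminates after finitely many steps, as in the paper.
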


\begin{proof}
  Let $(x;u)$ be a local system of quantized symplectic coordinates
  (see~\cite[\S2.2.3]{KR08}). For $i=1,\dots, n$, set $e_i = \h
  w(x_i)\in\filt[-1]{\shw}$. From $w([x_i,x_j]) = 0$ we get $[e_i,x_j]
  = [e_j,x_i]$ for any $i,j=1,\dots, n$. Hence there locally exists
  $e\in\filt[0]{\shw}$ with $e_i = [x_i,e]$. Replacing $w$ by
  $w-\ad(\h^{-1}e)$ we may assume $w(x_i) = 0$.

  Set $d_i = \h w(u_i)\in\filt[-1]{\shw}$. From $w([x_i,u_j]) = 0$ we
  get $[x_i,d_j] = 0$, so that $d_i = d_i(x)$ does not depend on
  $u$. From $w([u_i,u_j]) = 0$ we get $[d_i,u_j] = [d_j,u_i]$. Hence
  there locally exists $d = d(x)\in\filt[0]{\shw}$ with $d_i =
  [u_i,d]$. Replacing $w$ by $w-\ad(\h^{-1}d)$ we have $w(x_i) =
  w(u_j) = 0$, and hence $w=0$.
\end{proof}

\begin{definition}\label{def:Pai}
  Let $\stkpa$ be the stack on $X$ associated with the separated
  prestack $\stkpao$ defined as follows.
  \begin{enumerate}
  \item Objects on $U\subset X$ are triples $\dq = (\shw,\,*,\,v)$ of
    a deformation quantization algebra $\shw$ on $U$, an
    anti-involution $*$ and an order preserving $\C$-linear derivation
    $v$ of $\shw$ such that $v(\h) = \h$ and $v * = * v$.

  \item If $\dq' = (\shw',\,*',\,v')$ is another object,
    \begin{multline*}
      \shHom[\stkpao](\dq',\dq) = \{(g,d)\setdef
      g\in\shIso[\shr\text{-}\stkalg](\shw', \shw),\ d\in\filt[0]{\shw},\\
      g *' = * g,\ d = d^*,\ v-gv'g^{-1} = \ad(\h^{-1}d) \},
    \end{multline*}
    with composition given by $(g,d) \circ (g',d') = (gg',d+g(d'))$.
  \end{enumerate}
\end{definition}

Using Lemma~\ref{lem:ChDer} one gets

\begin{lemma}
  The stack $\stkpa$ is a gerbe.
\end{lemma}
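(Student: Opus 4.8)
The plan is to verify the three defining properties of a gerbe for $\stkpa$: that it is a stack of groupoids, that it is locally non-empty, and that it is locally connected by isomorphisms. Since $\stkpa$ is by construction a stack of groupoids, the first point only requires exhibiting inverses. Given a morphism $(g,d)\colon\dq'\to\dq$ with $\dq=(\shw,*,v)$ and $\dq'=(\shw',*',v')$, I would set $(g,d)^{-1}=(g^{-1},-g^{-1}(d))$ and check the three conditions of Definition~\ref{def:Pai}: the intertwining $g^{-1}*=*'g^{-1}$ is immediate from $g*'=*g$; self-adjointness of $-g^{-1}(d)$ follows from $d=d^*$ together with $g*'=*g$; and the derivation identity comes from conjugating $v-gv'g^{-1}=\ad(\h^{-1}d)$ by $g^{-1}$ and using $g(\h')=\h$, which gives $v'-g^{-1}vg=-\ad((\h')^{-1}g^{-1}(d))$. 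Composition and associativity being straightforward, this part is routine.

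For local non-emptiness I would work in a Darboux chart \eqref{eq:DarbS}, so that $X$ is modelled on $T^*M$ and the deformation-quantization algebra $\shw_{\Omega_M^{\otimes1/2}}$ of Remark~\ref{re:wiii} is available, with anti-involution $*$ the formal adjoint and deformation parameter $\h=\partial_t^{-1}$ satisfying $\h^*=-\h$. It remains to produce a compatible derivation, and I would take $v$ to be the restriction to $\shw$ of the inner derivation $\ad(t\partial_t)$ of $\oim\rho\she$. A short Jacobi-identity computation shows that $\ad(t\partial_t)$ preserves $\shw=C^0_\h\oim\rho\she$; since $t\partial_t\in\filt[1]\she$ it is order preserving; one computes $\ad(t\partial_t)(\h)=\h$; and from $(t\partial_t)^*=-t\partial_t-1$ and the identity $*\ad(a)*=-\ad(a^*)$ one gets $v*=*v$. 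Thus $(\shw_{\Omega_M^{\otimes1/2}},*,v)$ is a local object of $\stkpao$.

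The crux is local connectedness. Given two objects $\dq=(\shw,*,v)$ and $\dq'=(\shw',*',v')$ over a small $U$, I would first invoke that deformation-quantization algebras with compatible anti-involution and deformation parameter form a gerbe (a consequence of Lemma~\ref{lem:stkpw} together with the quantization of contact transformations, cf.\ Theorem~\ref{thm:qchi}) to obtain, after shrinking $U$, an algebra isomorphism $g\colon\shw'\isoto\shw$ with $g*'=*g$ and $g(\h')=\h$. The obstruction to $(g,\cdot)$ being a morphism is the derivation $w:=v-gv'g^{-1}$ of $\shw$. Since $w(\h)=v(\h)-g(v'(\h'))=\h-g(\h')=0$, the derivation $w$ is order preserving and $\hfield$-linear, so Lemma~\ref{lem:ChDer} applies and locally $w=\ad(\h^{-1}d)$ for some $d\in\filt[0]\shw$. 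Finally I would arrange $d=d^*$: from $v*=*v$, $v'*'=*'v'$ and $g*'=*g$ one gets $w*=*w$, whence $w=*w*=-\ad((\h^{-1}d)^*)$ by $*\ad(a)*=-\ad(a^*)$; comparing with $w=\ad(\h^{-1}d)$ shows that $\h^{-1}d+(\h^{-1}d)^*$ is central. Replacing $d$ by $d-\tfrac12\h\,(\h^{-1}d+(\h^{-1}d)^*)$ leaves $\ad(\h^{-1}d)$ unchanged and, using $\h^*=-\h$, makes $d$ self-adjoint. Then $(g,d)\colon\dq'\to\dq$ is the desired isomorphism.

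The main obstacle is this last normalization, namely that the primitive $d$ of the inner derivation $w$ can be chosen self-adjoint; this is precisely where the compatibilities $v*=*v$ and $\h^*=-\h$ enter through $*\ad(a)*=-\ad(a^*)$. The appeal to Lemma~\ref{lem:ChDer} is what makes $w$ inner at all, so its hypotheses (order preserving and $\hfield$-linear) must be checked with care, as must the existence of the intertwining isomorphism $g$; with these in hand the three gerbe axioms follow as above.
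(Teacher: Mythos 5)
Your proof is correct and follows the route the paper intends: the paper's entire proof is the remark ``Using Lemma~\ref{lem:ChDer} one gets,'' i.e.\ local non-emptiness via the model $(\shw_{\Omega_M^{\otimes 1/2}},*,\ad(t\partial_t))$ and local connectedness by applying Lemma~\ref{lem:ChDer} to the order-preserving $\hfield$-linear derivation $v-gv'g^{-1}$ (which kills $\h$) and then normalizing the primitive $d$ to be self-adjoint using $*\ad(a)*=-\ad(a^*)$ and the centrality of $\h$ in $\shw$. You have simply filled in the details that the paper leaves to the reader, including the only genuinely non-routine step (the self-adjoint normalization of $d$), and done so correctly.
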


\begin{remark}
  Let $M$ be a complex manifold and $X=T^*M$. With notations as in
  Remark~\ref{re:stkpw}, where $\h = \partial_t^{-1}$, a global
  object of $\stkpa$ is given by
  $(\shw_{\Omega_M^{\otimes 1/2}},*,\ad(t\partial_t))$.
\end{remark}

\begin{lemma}\label{lem:stkpwi}
  For any $\dq = (\shw,\,*,\,v)\in\stkpa(U)$ there is a group
  isomorphism
  \[
  \psi'_\omega \colon \Ga_U \times \{ b \in \filt[0]{\shw}^\times
  \setdef b^*b=1,\ \sigma_0(b)=1 \} \isoto \shEnd[\stkpa](\dq)
  \]
  given by $\psi'_\omega(\mu,b) = (\Ad(b),\mu + \h v(b)b^{-1})$.
\end{lemma}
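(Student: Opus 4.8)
The plan is to verify directly that $\psi'_\omega$ is a well-defined homomorphism of sheaves of groups and then to establish bijectivity. First I would check that $\psi'_\omega(\mu,b)=(\Ad(b),\,\mu+\h v(b)b^{-1})$ really is an endomorphism of $\dq$ in the sense of Definition~\ref{def:Pai}. The condition $b^*b=1$ gives $b^*=b^{-1}$, so $b^*b$ is central and $\Ad(b)$ commutes with $*$. Writing $d=\mu+\h v(b)b^{-1}$, the hypotheses $v*=*v$, $\h^*=-\h$ and the centrality of $\h$ in $\shw$ yield $(\h v(b)b^{-1})^*=\h v(b)b^{-1}$, whence $d=d^*$; moreover $d\in\filt[0]\shw$ since $v$ is order preserving. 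Finally, a short computation using that $v$ is a derivation gives $v-\Ad(b)\,v\,\Ad(b)^{-1}=\ad(v(b)b^{-1})$, and since $\mu\h^{-1}$ is central this equals $\ad(\h^{-1}d)$, which is exactly the compatibility required of a morphism.

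Next I would verify the homomorphism property $\psi'_\omega(\mu,b)\circ\psi'_\omega(\mu',b')=\psi'_\omega(\mu+\mu',bb')$. Unwinding the composition law $(g,d)\circ(g',d')=(gg',\,d+g(d'))$ of Definition~\ref{def:Pai}, this reduces to the Leibniz identity $v(bb')(bb')^{-1}=v(b)b^{-1}+b\,(v(b')b'^{-1})\,b^{-1}$ together with the centrality of $\h$ (so that $\Ad(b)$ fixes the scalar $\mu'$); the two sides then match term by term. Injectivity is then immediate by a degree count: if $\psi'_\omega(\mu,b)=\psi'_\omega(\mu',b')$, then $\Ad(b)=\Ad(b')$ forces $b'=bc$ for a central unit $c$, while equality of the $d$-components gives $\mu-\mu'=\h\,v(c)c^{-1}$; but the normalizations $b^*b=b'^*b'=1$ and $\sigma_0(b)=\sigma_0(b')=1$ force $c=e^{w}$ with $w$ central, anti-self-adjoint and involving only powers $\h^n$ with $n\ge 1$, so $\h\,v(c)c^{-1}=\h\,v(w)$ involves only powers $\h^n$ with $n\ge 2$, whereas $\mu-\mu'\in\C$ sits in degree $0$. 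Hence both vanish and $(\mu,b)=(\mu',b')$.

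The substance of the argument is surjectivity, and here I would proceed in two steps. First, I would show that the algebra part $g$ of any endomorphism is inner: being a filtered $*$-preserving automorphism of $\shw$ covering $\id_X$, it has the form $g=\Ad(b_0)$ with $b_0^*b_0=1$ and $\sigma_0(b_0)=1$, exactly as in Lemma~\ref{lem:stkpe} (compare also Lemma~\ref{lem:stkpw}, where this inner part is the factor $b$). Composing a given $(g,d)$ with $\psi'_\omega(0,b_0)^{-1}=\psi'_\omega(0,b_0^{-1})$ we may thus assume $g=\id$. Then the morphism condition reads $\ad(\h^{-1}d)=0$, so $z:=\h^{-1}d$ is central; self-adjointness of $d$ together with $\h^*=-\h$ forces $z$ to be anti-self-adjoint, i.e.\ a series in odd powers of $\h$, and $d\in\filt[0]\shw$ forces $z$ to involve only powers $\h^n$ with $n\ge -1$. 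Writing $\h z=z_{-1}+z_1\h^2+z_3\h^4+\cdots$, I would set $\mu=z_{-1}\in\C$ and solve $\h\,v(w)=z_1\h^2+z_3\h^4+\cdots$ for a central $w$ in odd powers of $\h$ of order $\le -1$; this is possible because $v(\h^n)=n\h^n$, and then $c=e^{w}$ is a central unit with $c^*c=1$, $\sigma_0(c)=1$ and $\psi'_\omega(\mu,c)=(\id,d)$. Running the reduction backwards yields $(g,d)=\psi'_\omega(\mu,b_0c)$.

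I expect the innerness of $g$ to be the main obstacle, together with the bookkeeping of self-adjoint central elements: the whole scheme hinges on the facts that the center of $\shw$ is $\hfield=\C[\h^{-1},\h]]$, that $v$ acts on it by $\h^n\mapsto n\h^n$, and that self-adjointness isolates exactly the constant term $\mu\in\C$ as the only piece not absorbed by modifying $b$ by a central unit. A clean alternative, which I would keep in reserve, is to deduce the statement directly from Lemma~\ref{lem:stkpw} through the correspondence $(\rho,\she,*,\h)\mapsto(\shw=C^0_\h\oim\rho\she,\,*,\,v)$ sending $(T_\mu,\Ad(be^{\mu\h^{-1}}))$ to $(\Ad(b),\,\mu+\h v(b)b^{-1})$, since $\Ad(e^{\mu\h^{-1}})$ acts trivially on $\shw$ while the translation $T_\mu$ is recorded by the derivation shift.
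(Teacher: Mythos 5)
Your proposal is correct and follows essentially the same route as the paper: injectivity by reducing to central elements and using that $v$ acts as $\h\partial_\h$ on the center together with the parity constraint from $d^*=d$ and $\h^*=-\h$; surjectivity by first making the automorphism part inner and normalized (so one may assume $g=\id$), then explicitly solving $\h\,v(c)c^{-1}=d-\mu$ by an exponential of a central series in odd powers of $\h$. The extra verifications of well-definedness and of the homomorphism property, which the paper leaves implicit, are accurate.
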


\begin{proof}
  (i) Let us prove injectivity. Assume that $\Ad(b)=\id$ and $\mu+ \h
  v(b)b^{-1} =0$. Then $b\in\hfieldo$, $\mu=0$ and $v(b)=0$. As $v(b)
  = \h\frac{\partial}{\partial\h}b$, we get $b\in\C$. Since
  $\sigma_0(b) = 1$, this finally gives $b=1$.

  \medskip\noindent (ii) Let us prove surjectivity. Take $(g,d)\in
  \shEnd[\stkpa](\dq)$. Since any $\hfield$-algebra automorphisms of
  $\shw$ is inner, we can locally write $g = \Ad(b)$ for some
  $b\in\filt[0]{\shw}^\times$. As $g$ commutes with the
  anti-involutions, we have $\Ad(b)(a^*) = (\Ad(b)(a))^* =
  \Ad(b^{*-1})(a^*)$ for any $a\in\shw$. This implies $\Ad(b^* b) =
  \id$, so that $b^*b\in\hfieldo$. Take $k\in\hfieldo$ with $k^*k =
  b^*b$. Up to replacing $b$ with $bk^{-1}$ we may thus assume that
  $b^*b=1$. This implies $\sigma_0(b) = \pm1$ and we may further
  assume that $\sigma(b) = 1$. Replacing $(g,d)$ by
  $(g,d)\cdot\psi'_\omega(b^{-1},0)$ we may thus assume $g = \id$.

  Since $\ad(\h^{-1}d) =0$, we have $d\in\hfieldo$. As $d^*=d$ and
  $\h^* = -\h$, the coefficients of the odd powers of $\h$ in $d$
  vanish, and we may write $d = \mu + \h^2 d'$ for $\mu\in\C$ and
  $d'\in\hfieldo$. Take $d''\in\hfieldo$ such that
  $\h\frac{\partial}{\partial\h}d'' = d'$, and set $b=\exp(\h d'')$.
  Since $v(b)b^{-1} = \h d'$, we have $d = \mu + \h v(b)b^{-1}$. Hence
  $\psi'_\omega(\mu,b) = (\id, d)$.
\end{proof}

\begin{definition}\label{def:Wprime}
  The algebroid $\stkw'_X$ is the $\hfield$-algebroid associated to
  the data $(\stkpa,\Phi'_\stkw,\ell)$ where
  \[
  \Phi'_\stkw(\dq) = \shw, \quad \Phi'_\stkw(g,d) = g, \quad
  \ell_{\dq}(h,e) = b,
  \]
  for $\dq = (\shw,\,*,\,v)$, $(g,d)\colon \dq' \to \dq$ and $(h,e) =
  \psi'_\omega(\mu,b)$.
\end{definition}

\begin{proposition}
  There is a $\hfield$-linear equivalence
  \[
  \stkw'_X \simeq \stkw_X.
  \]
\end{proposition}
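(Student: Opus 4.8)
The plan is to realize the equivalence as one induced by a comparison of algebroid data in the sense of Proposition~\ref{pro:P}: the algebroid $\stkw_X$ comes from $(\stkpw,\Phi_\stkw,\ell)$ (Definition~\ref{def:W}) and $\stkw'_X$ from $(\stkpa,\Phi'_\stkw,\ell)$ (Definition~\ref{def:Wprime}). First I would construct a functor of gerbes $F\colon\stkpw\to\stkpa$, then check that it intertwines the algebra-valued functors and the liftings; since the construction of the associated algebroid is functorial in its data, such an $F$ yields a functor $\stkw_X\to\stkw'_X$ that is an equivalence as soon as $F$ is an equivalence of gerbes.

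On objects, I would send $\q=(\rho,\she,*,\h)$ to $\dq=(\shw,*,v)$, where $\shw=C^0_\h\oim\rho\she$ is the same algebra occurring in $\Phi_\stkw(\q)$ and $v$ is the derivation induced by the contactification: in a local trivialization of $\rho$ with fibre coordinate $t$, so that $\h^{-1}$ is $\partial_t$, set $v=\ad(t\partial_t)$. A direct bracket computation (using $[\h,t\partial_t]=-\h$) shows that $\ad(t\partial_t)$ preserves $\shw$, is order preserving and $\C$-linear, satisfies $v(\h)=\h$, and commutes with $*$, so $\dq\in\stkpa$. Changing the trivialization by $t\mapsto t+\varphi$ replaces $v$ by $v+\ad(\h^{-1}\varphi)$, i.e.\ by the canonical isomorphism $(\id,\varphi)$ of $\stkpa$; hence $F$ is well defined as a morphism of stacks. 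On a morphism $(\chi,f)\colon\q'\to\q$ I set $F(\chi,f)=(g,d)$ with $g=\oim\rho f$, so that $\Phi'_\stkw(F(\chi,f))=g=\Phi_\stkw(\chi,f)$; here $d\in\filt[0]\shw$ is furnished by Lemma~\ref{lem:ChDer} applied to the order-preserving $\hfield$-linear derivation $v-gv'g^{-1}$ (it is $\hfield$-linear since $v$ and $gv'g^{-1}$ both fix $\h$), and it may be taken self-adjoint because that derivation commutes with $*$ and $\h^*=-\h$.

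It then remains to compare the auxiliary data and to prove that $F$ is an equivalence. On objects $\Phi'_\stkw(F(\q))=\shw=\Phi_\stkw(\q)$, and on morphisms both give $\oim\rho f$, so $\Phi_\stkw$ and $\Phi'_\stkw\circ F$ coincide. For the liftings, using that $\Ad(e^{\mu\h^{-1}})$ acts trivially on $\shw$ and that $v-\Ad(b)v\Ad(b)^{-1}=\ad(v(b)b^{-1})$, one checks that $F$ carries $\psi(\mu,b)=(T_\mu,\Ad(be^{\mu\h^{-1}}))$ of Lemma~\ref{lem:stkpw} to $\psi'_\omega(\mu,b)=(\Ad(b),\mu+\h v(b)b^{-1})$ of Lemma~\ref{lem:stkpwi}; since $\ell_\q(\psi(\mu,b))=b=\ell_\dq(\psi'_\omega(\mu,b))$, the liftings match. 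This same matching shows $F$ induces isomorphisms on automorphism sheaves, and $F$ is locally essentially surjective because any $(\shw,*,v)\in\stkpa$ is locally of the standard form realized by a contactification, the difference of $v$ and the model derivation $\ad(t\partial_t)$ being inner by Lemma~\ref{lem:ChDer}. Hence $F$ is an equivalence of gerbes and the induced functor $\stkw_X\to\stkw'_X$ is an equivalence, $\hfield$-linear because $g=\oim\rho f$ fixes $\h$. I expect the main obstacle to be exactly this reconstruction of the geometric $t$-variable from the algebraic derivation $v$, together with the self-adjointness bookkeeping for $d$, both of which rest on Lemma~\ref{lem:ChDer}.
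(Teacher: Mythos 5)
Your strategy is the paper's: both proofs reduce the statement to an equivalence of gerbes $\stkpw\simeq\stkpa$ realized by $(\rho,\she,*,\h)\mapsto(C^0_\h\oim\rho\she,*,\ad(t\partial_t))$, with full faithfulness checked by matching the automorphism sheaves $\psi(\mu,b)\leftrightarrow\psi'_\omega(\mu,b)$ and the conclusion drawn from the fact that a fully faithful functor between gerbes is an equivalence. The computations you indicate (that $\ad(t\h^{-1})$ preserves $\shw$, fixes $\h$, commutes with $*$; that $v-\Ad(b)v\Ad(b)^{-1}=\ad(v(b)b^{-1})$) are exactly the ones the paper uses.

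There is, however, a genuine gap in how you define the functor $F$. Both your object map and your morphism map rest on non-canonical local choices, and the assertion ``hence $F$ is well defined as a morphism of stacks'' does not follow as stated. On objects, the derivation $\ad(t\partial_t)$ depends on a local trivialization of $\rho$; different choices give objects of $\stkpa$ that are canonically isomorphic but not equal, so you have not produced a functor but only a functor ``up to coherent isomorphism,'' and you would still need to verify the cocycle condition on triple overlaps and invoke descent (note that $\stkpa$ is itself only the stack associated to the prestack $\stkpao$). On morphisms the problem is worse: Lemma~\ref{lem:ChDer} furnishes $d$ with $v-gv'g^{-1}=\ad(\h^{-1}d)$ only locally and only up to addition of a central self-adjoint element of $\hfieldo$, so your local choices of $d$ need not glue to a section of $\shHom[\stkpa]$, and compatibility with composition is not checked. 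The paper resolves precisely this point by interposing the rigidified gerbe $\stkpw''$ whose objects carry the extra datum $\hh\in\filt[0]\she$ with $[\h^{-1},\hh]=1$ (the quantized $t$-coordinate): the forgetful functor $\stkpw''\to\stkpw$ is an equivalence because morphisms impose no condition on $\hh$, while on $\stkpw''$ the assignments $v=\ad(\hh\h^{-1})$ and $d=\hh-f(\hh')$ are global, canonical formulas, so the functor to $\stkpa$ is honestly defined. Your proof needs either this rigidification or an explicit descent argument in its place.
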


This follows from the following proposition.

\begin{proposition}
  There is an equivalence of gerbes
  \[
  \stkpa \simeq \stkpw.
  \]
\end{proposition}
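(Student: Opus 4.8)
The plan is to exhibit a functor $\Psi\colon\stkpw\to\stkpa$ and, since both stacks are gerbes, to reduce the claim that $\Psi$ is an equivalence to the verification that it induces an isomorphism on the automorphism sheaves described in Lemmas~\ref{lem:stkpw} and~\ref{lem:stkpwi}. On objects I would send $\q=(\rho,\she,*,\h)$, with $\rho=(V\to[\rho]U,\alpha)$, to the triple $(\shw,*,v)$, where $\shw=C_\h^0\oim\rho\she$ is the deformation-quantization algebra of Definition~\ref{def:W}; the anti-involution $*$ restricts to $\shw$, which it preserves since $\h^*=-\h$ forces $[\h,a]=0\Leftrightarrow[\h,a^*]=0$; and $v=\ad(t\h^{-1})$, where $t$ is the fibre coordinate obtained by trivialising $\rho$ through a local primitive $\theta$ of $\omega$ as in Lemma~\ref{lem:cont}. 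A short computation using $[\h,t]=-\h^2$ gives $v(\h)=\h$; the relation $\h^{-1}t=t\h^{-1}+1$ together with $t^*=t$ and $(\h^{-1})^*=-\h^{-1}$ gives $v*=*v$; and $v$ is order preserving since $t\h^{-1}\in\filt[1]{\oim\rho\she}$. Thus $(\shw,*,v)\in\stkpao(U)$.

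On morphisms I would send $(\chi,f)\colon\q'\to\q$ to the pair $(g,d)$, where $g=\oim\rho f|_{\shw'}\colon\shw'\isoto\shw$ is the induced algebra isomorphism and $d\in\filt[0]{\shw}$ is characterised by $v-gv'g^{-1}=\ad(\h^{-1}d)$ and $d=d^*$. The difference $v-gv'g^{-1}$ is an order-preserving, $\hfield$-linear derivation of $\shw$ (it annihilates $\h$, since $g(\h')=\h$, $v(\h)=\h$ and $v'(\h')=\h'$), so such a $d$ exists locally by Lemma~\ref{lem:ChDer}; it can be taken self-adjoint by symmetrisation, because $v-gv'g^{-1}$ commutes with $*$, and it is unique up to a self-adjoint central summand, whose $\h^0$-component I would pin down from the translation part of $\chi$ (recall from Lemma~\ref{lem:cont} that $\chi$ is locally $t\mapsto t+\varphi$). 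I would then check that $\Psi$ respects composition, matching the twisted law $(g,d)\circ(g',d')=(gg',d+g(d'))$ of Definition~\ref{def:Pai}, and that changing the auxiliary coordinate $t$ (equivalently the primitive $\theta$) alters $(\shw,*,v)$ only by an isomorphism of the form $(\id,\varphi)$, these isomorphisms satisfying the cocycle condition inherited from the gerbe $\stkc_\omega$. Hence $\Psi$ descends to a well-defined functor of the associated stacks.

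The heart of the argument is the compatibility of $\Psi$ with Lemmas~\ref{lem:stkpw} and~\ref{lem:stkpwi}: I claim $\Psi$ carries $\psi(\mu,b)=(T_\mu,\Ad(be^{\mu\h^{-1}}))$ to $\psi'_\omega(\mu,b)=(\Ad(b),\mu+\h v(b)b^{-1})$. Indeed $\Ad(e^{\mu\h^{-1}})=\exp(\mu\,\ad(\h^{-1}))$ restricts to the identity on $\shw=C_\h^0\oim\rho\she$, because $\ad(\h^{-1})$ vanishes there, so $g=\Ad(b)$; and for $g=\Ad(b)$ one computes $v-\Ad(b)\,v\,\Ad(b^{-1})=\ad(v(b)b^{-1})=\ad\bigl(\h^{-1}\cdot\h v(b)b^{-1}\bigr)$, so that $d\equiv\h v(b)b^{-1}$ modulo the centre, the residual constant being fixed to $\mu$ by the translation $T_\mu$. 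Note that $\h v(b)b^{-1}\in\filt[0]{\shw}$ is self-adjoint because $v(b)b^{-1}\in\shw$ commutes with $\h$ and $b^*=b^{-1}$, so $\psi'_\omega(\mu,b)$ is legitimate. As both $\psi$ and $\psi'_\omega$ are isomorphisms onto the respective automorphism sheaves, $\Psi$ induces an isomorphism $\shEnd[\stkpw](\q)\isoto\shEnd[\stkpa](\Psi\q)$.

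Finally I would conclude as follows. Both $\stkpw$ and $\stkpa$ are gerbes, hence locally non-empty and locally connected by isomorphisms; $\Psi$ is locally essentially surjective because it carries the standard local object of Remark~\ref{re:stkpw} to the standard object $(\shw_{\Omega_M^{\otimes 1/2}},*,\ad(t\partial_t))$ of $\stkpa$, to which every object is locally isomorphic. Since in a gerbe the sheaves of morphisms are torsors under the relevant automorphism sheaves, and $\Psi$ is equivariant and bijective on the latter, $\Psi$ is fully faithful; together with essential surjectivity this shows that $\Psi$ is an equivalence of gerbes. The step I expect to be the main obstacle is not the automorphism computation but the well-posedness of $\Psi$ on morphisms: fixing the central ambiguity of $d$ consistently through $\chi$, verifying the twisted-composition law $d+g(d')$, and checking independence of the choice of $t$ at the level of the associated stacks.
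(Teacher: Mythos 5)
Your overall strategy is the same as the paper's: send $(\rho,\she,*,\h)$ to $(C^0_\h\oim\rho\she,\,*,\,\ad(\hh\h^{-1}))$ for a local coordinate $\hh=t$, match the automorphism sheaves via Lemmas~\ref{lem:stkpw} and~\ref{lem:stkpwi}, and conclude by the standard criterion for a morphism of gerbes. The gap is exactly where you predict it, and it is a real one: the conditions $v-gv'g^{-1}=\ad(\h^{-1}d)$ and $d=d^*$ do \emph{not} characterise $d$. They leave $d$ undetermined up to an arbitrary self-adjoint central summand $c_0+c_2\h^2+c_4\h^4+\cdots\in\hfieldo$, and each such summand produces a genuinely different morphism of $\stkpa$; for instance $(\id,\h^2)=\psi'_\omega(0,e^{\h})$ is a nontrivial automorphism by Lemma~\ref{lem:stkpwi}, so $(g,d)$ and $(g,d+\h^2)$ are distinct. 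Pinning down only the $\h^0$-component from the translation part of $\chi$ therefore does not suffice; without a rule fixing all the even components the functor is not defined on morphisms, and neither the composition law $(g,d)\circ(g',d')=(gg',d+g(d'))$ nor the independence of the choice of $t$ can even be formulated, let alone checked.

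The paper resolves all of these difficulties with one explicit formula. It introduces an auxiliary gerbe $\stkpw''$ whose objects are quintuples $(\rho,\she,*,\h,\hh)$ with $[\h^{-1},\hh]=1$ and whose morphisms are simply those of $\stkpw$; the forgetful functor $\stkpw''\to\stkpw$ is manifestly an equivalence. Since $\hh$ is now part of the data of an object, one can set $d=\hh-f(\hh')$ outright; the identity $\ad(\hh\h^{-1})-f\ad(\hh'\h'^{-1})f^{-1}=\ad\bigl((\hh-f(\hh'))\h^{-1}\bigr)$ shows this is a morphism of $\stkpa$, functoriality in $(\chi,f)$ is immediate, and the coherence issues you list at the end disappear because no local choices remain. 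Your computation identifying the images of $\psi(\mu,b)$ with $\psi'_\omega(\mu,b)$ is then the content of the paper's full-faithfulness step. In short: replace your implicit characterisation of $d$ by the formula $d=\hh-f(\hh')$, i.e.\ carry the coordinate $t$ along as part of the object rather than as an auxiliary choice, and your argument becomes the paper's proof.
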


\begin{proof}
  Let us consider the gerbe $\stkpw''$ whose objects on
  $U\subset X$ are quintuples $\q = (\rho,\,\she,\,*,\,\h,\hh)$ such
  that $\pi(\q) = (\rho,\,\she,\,*,\,\h)$ is an object of
  $\stkpw$ and $\hh\in\filt[0]{\she}$ is an operator with
  $[\h^{-1},\hh] = 1$. (The local model in a Darboux chart is obtained
  by Example~\ref{exa:stkpw} with $\h^{-1}=\partial_t$ and $\hh = t$.)
  We set
  \[
  \shHom[\stkpw''](\q',\q) =
  \shHom[\stkpw](\pi(\q'),\pi(\q)).
  \]
  There is a natural equivalence
  \[
  \stkpw'' \isoto \stkpw, \quad \q \mapsto \pi(\q).
  \]
  Consider the functor $\psi\colon \stkpw'' \to \stkpa$ given by
  \begin{align*}
    \q & \mapsto (C^0_\h\oim\rho\she,*,\ad(\hh \h^{-1})), &&
    \text{for }\q = (\rho,\,\she,\,*,\,\h,\hh), \\
    (\chi,f) & \mapsto \bigl(\oim\rho f,\hh-f(\hh')\bigr), &&\text{for
    }(\chi,f)\colon\q'\to\q.
  \end{align*}
  This is well defined since
  \[
  \ad(\hh \h^{-1}) - f \ad(\hh' \h^{\prime -1}) f^{-1} =
  \ad((\hh-f(\hh'))\h^{-1}).
  \]
  It follows from Lemmas~\ref{lem:stkpwi} and \ref{lem:stkpw} that
  $\psi$ is fully faithful. As $\stkpw''$ and $\stkpa$ are
  gerbes, $\psi$ is an equivalence.
\end{proof}

Recall that if $\dq = (\shw,\,*,\,v)$ is an object of $\stkpa$ on an
open subset $U\subset X$, then $\stkw_X|_U$ is represented by
$\shw$. As shown in~\cite{Pol08}, the filtration and the
anti-involution of $\shw$ extend to $\stkw_X$. As we will now explain,
also the derivation of $\shw$ extends to $\stkw_X$.

Let $\varepsilon$ be a formal variable with $\varepsilon^2 =
0$. Consider the natural morphisms
\[
\shw \to[i] \shw[\varepsilon] \to[\pi] \shw.
\]
Let us extend the anti-involution $*$ to $\shw[\varepsilon]$ by
setting $\varepsilon^* = -\varepsilon$.

\begin{lemma}
  Let $\varphi\colon\shw\to\shw[\varepsilon]$ be an order preserving
  $\C$-algebra morphism such that $\pi\varphi = \id_\shw$,
  $\varphi(\h) = \h +\varepsilon\h^2$ and $\varphi * = *
  \varphi$. Then $\varphi = i + \varepsilon\h v$ for an order
  preserving $\C$-linear derivation $v$ of $\shw$ such that $v* = *v$.
\end{lemma}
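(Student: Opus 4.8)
The plan is to use that $\shw[\varepsilon]=\shw\oplus\varepsilon\shw$ as a $\C$-module, with $\varepsilon$ central and $\varepsilon^2=0$. First I would record that, since $\pi\varphi=\id_\shw$ kills the $\varepsilon$-part, every $\varphi(a)$ has $\shw$-component equal to $a$; thus there is a unique $\C$-linear map $\delta\colon\shw\to\shw$ with
\[
\varphi(a)=a+\varepsilon\,\delta(a),\qquad a\in\shw .
\]
Expanding $\varphi(ab)=\varphi(a)\varphi(b)$ and discarding the term in $\varepsilon^2=0$ gives $\delta(ab)=a\,\delta(b)+\delta(a)\,b$, so $\delta$ is a $\C$-linear derivation of $\shw$.

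Next I would set $v=\h^{-1}\delta$. The one genuinely structural input is that $\h$ is central in $\shw$: this is the defining feature of a deformation-quantization algebra (locally $\shw=C^0_\h\oim\rho\she$ is by construction the centralizer of $\h$). Centrality is exactly what lets one pull $\h^{-1}$ past any $a\in\shw$, so that
\[
v(ab)=\h^{-1}\bigl(a\,\delta(b)+\delta(a)\,b\bigr)=a\,\h^{-1}\delta(b)+\h^{-1}\delta(a)\,b=a\,v(b)+v(a)\,b,
\]
and $v$ is again a $\C$-linear derivation. The hypothesis $\varphi(\h)=\h+\varepsilon\h^2$ reads $\delta(\h)=\h^2$, hence $v(\h)=\h$, as required for an object of $\stkpa$; and by construction $\varphi=i+\varepsilon\h v$.

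It remains to check the two compatibilities. For the filtration, one takes $\filt[k]{(\shw[\varepsilon])}=\filt[k]\shw+\varepsilon\filt[k-1]\shw$ (equivalently, $\varepsilon$ carries order $+1$), which is the convention making $\varphi(\h)=\h+\varepsilon\h^2$ order preserving; then $\varphi$ order preserving means $\delta(\filt[k]\shw)\subset\filt[k-1]\shw$, so $v=\h^{-1}\delta$ sends $\filt[k]\shw$ into $\h^{-1}\filt[k-1]\shw=\filt[k]\shw$ and is order preserving. For the anti-involution, extending $*$ by $\varepsilon^*=-\varepsilon$ (with $\varepsilon$ central), the identity $\varphi(a^*)=\varphi(a)^*$ gives $\delta(a^*)=-\delta(a)^*$; using $\h^*=-\h$ and centrality of $\h$ one then computes
\[
v(a^*)=-\h^{-1}\delta(a)^*=\bigl(\h^{-1}\delta(a)\bigr)^*=v(a)^* ,
\]
so $v*=*v$.

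I expect the main obstacle to be conceptual rather than computational: recognizing that $\h^{-1}\delta$, and not $\delta$ itself, is the natural derivation, and that this hinges on the centrality of $\h$ in $\shw$. The only other point needing care is fixing the filtration convention on $\shw[\varepsilon]$ so that the passage from $\delta$ to $v=\h^{-1}\delta$ preserves orders; everything else is routine bookkeeping in the $\C$-module decomposition $\shw\oplus\varepsilon\shw$.
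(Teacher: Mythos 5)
The paper states this lemma without proof, so there is no ``paper's proof'' to compare against; judged on its own, your argument is correct, and it isolates the right structural point. The decomposition $\varphi=i+\varepsilon\delta$ forced by $\pi\varphi=\id_\shw$, the Leibniz identity for $\delta$, the relation $\delta(a^*)=-\delta(a)^*$ coming from $\varepsilon^*=-\varepsilon$, and the passage to $v=\h^{-1}\delta$ using that $\h$ (hence $\h^{-1}$) is central in $\shw=C^0_\h\oim\rho\she$ are all exactly as they should be, and centrality of $\h$ is indeed what makes $v$ a derivation and makes $v*=*v$ come out of $\h^*=-\h$. The one point I would not leave implicit is the filtration on $\shw[\varepsilon]$: the paper never fixes it, and your choice $\filt[k]\shw+\varepsilon\filt[k-1]\shw$ is precisely the one under which order-preservation of $v$ is tautological, whereas the remark following the lemma identifies $(\shw[\varepsilon])_\varphi$ with $C^1_\h\oim\rho\she$ via $a+\varepsilon b\mapsto at+b$, which rather suggests the convention in which $\varepsilon$ has order $0$. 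Under that weaker convention the hypothesis only gives $\delta(\filt[k]\shw)\subset\filt[k]\shw$, and one extra step is needed: the induced degree-zero derivation $\bar\delta$ of $\gr\shw$ kills $\h$ (since $\delta(\h)=\h^2$ has order $-2$) and kills $\gr_0\shw\simeq\sho_X$ (since $\delta(a^*)=-\delta(a)^*$ together with $\sigma_0(a^*)=\sigma_0(a)$ forces $\bar\delta=-\bar\delta$ there), hence $\bar\delta=0$ and $\delta(\filt[k]\shw)\subset\filt[k-1]\shw$ after all. Adding that short symbol argument would make your proof independent of the choice of filtration on $\shw[\varepsilon]$; with it, the conclusion holds under either reading.
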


\begin{remark}
  There is an isomorphism of $\shw\tens[\C]\shw^\op$-modules
  \[
  (\shw[\varepsilon])_\varphi \simeq C_\h^1\oim\rho\she
  \]
  such that the multiplication by $\varepsilon$ corresponds to
  $\ad(\h^{-1})$.  In local coordinates where $\h^{-1} = \partial_t$
  and $v=\ad(t\partial_t)$, this isomorphism is given by
  $a+\varepsilon b \mapsto at+b$.
\end{remark}

The above lemma motivates the following definition.

\begin{definition}
  A derivation of a $\C$-linear stack $\stka$ is the data of a pair
  $\varphi=(\stkc,\varphi)$ where $\stkc$ is an invertible
  $\C[\varepsilon]$-algebroid such that $\stkc/\varepsilon$ is
  represented by $\C_X$ and $\varphi\colon \stka \to \stka \tens[\C]
  \stkc$ is a $\C$-linear functor such that $\pi\varphi \simeq
  \id_\stka$. Here $\pi\colon \stka\tens[\C]\stkc \to \stka$ is the
  functor induced by $\stkc\to\stkc/\varepsilon$.
\end{definition}

Consider the following algebroid.

\begin{definition}
  The algebroid $\stkw_X^\varepsilon$ is the
  $\hfield[\varepsilon]$-algebroid associated to the data
  $(\stkpa,\Phi_\stkw^\varepsilon,\ell)$ where
  \[
  \Phi_\stkw^\varepsilon(\dq) = \shw[\varepsilon], \quad
  \Phi_\stkw^\varepsilon(g,d) = (1+\varepsilon\ad(d))g, \quad
  \ell_{\dq}(h,e) = (1+\varepsilon\mu) b,
  \]
  for $\dq = (\shw,\,*,\,v)$, $(g,d)\colon \dq' \to \dq$ and $(h,e) =
  \psi'_\omega(\mu,b)$.
\end{definition}

There is a natural morphism
\[
\varphi\colon \stkw_X \to \stkw_X^\varepsilon
\]
satisfying $\varphi(\h) = \h +\varepsilon\h^2$ and $\varphi * = *
\varphi$.  Similarly to Proposition~\ref{pro:WQ}, one proves that
there is an equivalence of $\hfield[\varepsilon]$-algebroids
\[
\stkw_X^\varepsilon \simeq \stkw_X \tens[\C] \C[\varepsilon]_\omega,
\]
where $\C[\varepsilon]_\omega$ is the invertible
$\C[\varepsilon]$-algebroid given by Definition~\ref{def:Rw} for
\[
\ell\colon \Ga \to \C[\varepsilon]^\times, \quad \lambda\mapsto
(1+\varepsilon\lambda).
\]
Thus $\stkw_X$ is endowed with the derivation
$\varphi=(\C[\varepsilon]_\omega,\varphi)$.

Summarizing, $\stkw_X$ is a filtered $\hfield$-stack endowed with an
anti-involution $*$ and with a $\C$-linear derivation $\varphi$ such
that $\filt[0]\stkw_X/\filt[-1]\stkw_X$ is represented by $\sho_X$,
$\varphi(\h) = \h$ and $\varphi *=*\varphi$. One can prove along the
lines of~\cite{Pol08} that $\stkw_X$ is unique among the stacks which
satisfy these properties and which are locally represented by
deformation quantization algebras.

\subsection{Comparison of regular holonomic modules}

We shall compare here regular holonomic quantization-modules with regular holonomic deformation-quantization modules.
Let us start by recalling the definition of regular holonomic quantization-modules from~\cite{KS08}.

Let $X$ be a complex symplectic manifold and $\Lambda$ a closed Lagrangian subvariety of $X$.
Let $\shw$ be a deformation-quantization algebra on $X$.

\begin{definition}
\begin{itemize}
\item[(i)]
One says that a coherent $\filt[0]\shw$-module $\shm_0$ is regular holonomic along
$\Lambda$ if $\supp(\shm_0)\subset\Lambda$ and
$\shm_0/\h\shm_0$ is a coherent $\sho_\Lambda$-module.
\item[(ii)]
One says that a coherent $\shw$-module $\shm$ is regular holonomic along $\Lambda$ if 
$\supp(\shm)\subset\Lambda$ and
there exists locally a coherent $\filt[0]\shw$-submodule $\shm_0$
of $\shm$ such that $\shm_0$  generates $\shm$ over $\shw$ and $\shm_0$ is 
regular holonomic along $\Lambda$.
\end{itemize}
\end{definition}

Recall that $\stkw_X$ denotes the deformation-quantization algebroid.
As the above definition is local, there is a natural notion of regular holonomic $\stkw_X$-module along $\Lambda$.
Let us denote by $\stkMod[\Lambda,\reghol](\stkw_X)$
the full substack of $\stkMod[\coh](\stkw_X)$ whose objects are regular holonomic along $\Lambda$.

Up to shrinking $X$, we may assume that there exist a
contactification $\rho\colon Y\to X$ and a Lagrangian subvariety
$\Gamma$ of $Y$ such that $\rho$ induces an isomorphism $\Gamma \to
\Lambda$. By definition, regular holonomic $\stkquant_X$-modules along
$\Lambda$ are equivalent to regular holonomic $\stke_Y$-modules along
$\Gamma$. In order to compare quantization and
deformation-quantization modules, let us thus consider the forgetful
functor
\[
\mathrm{for}\colon \oim\rho\stkMod[\Gamma,\reghol](\stke_Y) \to
\stkMod[\Lambda,\reghol](\stkw_X)
\]
induced by the equivalence $\stkw_X \simeq C^0_\h\stke_{[\rho]}$ and the functor $\opb\rho\stke_{[\rho]} \to \stke_Y$ from \S\ref{sse:ind}.

\begin{proposition}
  \begin{itemize}
  \item[(i)] The functor $\mathrm{for}$ is faithful but not locally
    full in general.
  \item[(ii)] If $\Lambda$ is a smooth submanifold, the functor
    $\mathrm{for}$ is locally essentially surjective but not
    essentially surjective in general.
  \item[(iii)] The functor $\mathrm{for}$ is not locally essentially
    surjective in general.
  \end{itemize}
\end{proposition}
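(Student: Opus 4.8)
The plan is to read $\mathrm{for}$ as the composition of the equivalence $\rho^*_\stke$ of Proposition~\ref{pro:SEglobal}~(i) with restriction of scalars along the inclusion $\stkw_X\simeq C^0_\h\stke_{[\rho]}\hookrightarrow\stke_{[\rho]}$. Since $\she_{[\rho]}$ is the twisted polynomial extension $\shw\langle t\rangle$ (the isomorphism $\shw\langle S\rangle\isoto\she_{[\rho]}$, $S\mapsto t$, used to prove that $\she_{[\rho]}$ is Noetherian), passing to $\stkw_X$-modules amounts to forgetting the action of $t$ while remembering that $\h$ has become central. The single mechanism behind all three failures is that $\stke_Y$ is $\C$-linear whereas $\stkw_X$ is $\hfield$-linear.

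For part~(i), faithfulness is formal: $\rho^*_\stke$ is an equivalence and restriction of scalars is faithful. For the failure of local fullness I would work in a contractible chart of the local model of Example~\ref{ex:PMC}, with $\Lambda$ the zero-section and $\Gamma$ its lift, and take the simple regular holonomic $\stke_Y$-module $\delta_\Gamma=\she/\she I_\Gamma$, where $I_\Gamma$ is the defining ideal of $\Gamma$. Being simple with connected support, it satisfies $\shEnd[\stke_Y](\delta_\Gamma)\simeq\C$. Its image $\mathrm{for}(\delta_\Gamma)$ is the standard regular holonomic $\stkw_X$-module along $\Lambda$, on which the central element $\h$ acts; multiplication by $\h$ is then a $\stkw_X$-endomorphism lying outside $\C\cdot\id$. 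As over $\stke_Y$ the parameter $\h$ acts by $\partial_t^{-1}$ and not as a scalar, this endomorphism is not in the image of $\mathrm{for}$, so $\mathrm{for}$ is not locally full.

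For part~(ii) with $\Lambda$ smooth, local essential surjectivity follows from the local classifications: over a contractible chart the primitive is holomorphic, $\Gamma=\{t+f=0\}$ is a smooth graph, every regular holonomic $\stkw_X$-module is a free twisted local system, hence a sum of copies of the standard module, and these are the images of the corresponding sums of copies of $\delta_\Gamma$. The failure of global essential surjectivity is the point I expect to require the most care, and I would detect it by monodromy. Taking $X=T^*\C^\times$ with $\Lambda$ the zero-section, regular holonomic $\stke_Y$-modules along $\Gamma\cong\C^\times$ are $\C$-linear local systems, with rank-one monodromy in $\C^\times$; their images are $\hfield$-linear local systems whose monodromy is the image of a scalar in $\C^\times\subset\hfield^\times$, hence independent of $\h$. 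A regular holonomic $\stkw_X$-module may however have arbitrary monodromy in $\hfield^\times=(\C[\h^{-1},\h]])^\times$, for instance $1+\h$, and such a module cannot lie in the image of $\mathrm{for}$.

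For part~(iii) the obstruction is analytic rather than topological, and I would use the singular Lagrangian of the Example following Proposition~\ref{pro:Lambda}, where $f$ is not an analytic function of $(x,u)$. There $\rho\colon\Gamma\to\Lambda$ is a homeomorphism but induces a strict inclusion of structure sheaves $\sho_\Lambda\subsetneq\oim\rho\sho_\Gamma$, witnessed by $s^{11}\in\oim\rho\sho_\Gamma\setminus\sho_\Lambda$. Any module in the image of $\mathrm{for}$ comes from a regular holonomic $\stke_Y$-module along $\Gamma$, whose classical limit $\shm_0/\h\shm_0$ is a coherent $\sho_\Gamma$-module; thus the classical limit of its image is a module over $\oim\rho\sho_\Gamma$. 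It then suffices to exhibit a regular holonomic $\stkw_X$-module along $\Lambda$ whose classical limit is not a $\oim\rho\sho_\Gamma$-module, and the quantization of $\sho_\Lambda$ does the job, since $\sho_\Lambda$ is not stable under multiplication by the strictly larger ring $\oim\rho\sho_\Gamma$, so the failure is already local. The real content in (ii) and (iii) is isolating the obstructing invariant — the $\h$-dependence of the monodromy, respectively the enlargement of the structure sheaf — and verifying that the witnessing modules are genuinely regular holonomic; faithfulness and local essential surjectivity are soft by comparison.
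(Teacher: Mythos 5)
Your treatments of (i) and (ii) are essentially the paper's. For (i) the paper only remarks that faithfulness and failure of fullness hold for the general forgetful functor $\oim\rho\stkMod(\stke_Y)\to\stkMod(\stkw_X)$; your concrete witness (the central action of $\h$ on $\mathrm{for}(\delta_\Gamma)$, which cannot be $\she$-linear since $\shEnd[\stke_Y](\delta_\Gamma)\simeq\C$) is a correct instantiation of this. For (ii), both local essential surjectivity via the local-system descriptions and the counterexample with monodromy $1+\h$ on the zero section of $T^*(\C^\times)$ are exactly the paper's argument.

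Part (iii) is where you diverge from the paper, and your argument has two genuine gaps. First, the witnessing module is never constructed: you invoke ``the quantization of $\sho_\Lambda$'' for the cuspidal curve $\Lambda=\{(s^3,s^7+s^8)\}$, i.e.\ a coherent $\shw$-module admitting a lattice $\shm_0$ with $\shm_0/\h\shm_0\simeq\sho_\Lambda$, but the existence of such a module over a singular Lagrangian is precisely the kind of statement that requires proof (for smooth Lagrangians it already involves obstruction classes), and you give neither a construction nor a regularity check. Second, and more structurally, your obstructing invariant is lattice-dependent, so the test is not well posed. What is true is that a module in the image of $\mathrm{for}$ admits \emph{some} lattice whose reduction mod $\h$ is an $\oim\rho\sho_\Gamma$-module (push forward a lattice of the $\stke_Y$-module); but the same module also admits lattices whose reductions are not $\oim\rho\sho_\Gamma$-modules --- e.g.\ the preimage in $\oim\rho\shn_0$ of $\sho_\Lambda\subset\oim\rho\sho_\Gamma$ is a lattice of $\mathrm{for}(\delta_\Gamma)$ whose classical limit is an extension of $\sho_\Lambda$ by a skyscraper. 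So to conclude you would have to show that \emph{no} lattice of your candidate module has classical limit admitting an $\oim\rho\sho_\Gamma$-structure, and nothing in your argument addresses this quantification over lattices.

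The paper avoids both issues by a completely explicit computation (Proposition~\ref{pro:conex}): it takes the reducible Lagrangian $\Lambda=\{u=0\}\cup\{u=x^m\}$ in $T^*\C$ and the concretely presented module $\shm_a$ with generators $v_1,v_2$ and relations $\partial_x v_1=0$, $(\partial_x-x^m\partial_t)v_2=av_1$; it observes that $\shm_a\simeq\mathrm{for}(\shn)$ is equivalent to the existence of an endomorphism $t$ of $\shm_a$ with $[t,x]=[t,\partial_x]=0$ and $[t,\partial_t]=-1$, and then shows by direct calculation that such a $t$ exists only when the linking term $a$ is homogeneous. The obstruction there lives in the extension data between the two smooth components, not in the normalization of a singular curve, and the reduction to a commutation-relation problem sidesteps any choice of lattice. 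If you want to pursue your normalization idea, the minimal missing ingredients are an existence proof for the module quantizing $\sho_\Lambda$ and a lattice-independent formulation of the $\oim\rho\sho_\Gamma$-module obstruction.
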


\begin{proof}
  (i) holds more generally for the forgetful functor
  $\oim\rho\stkMod(\stke_Y) \to \stkMod(\stkw_X)$.

  \medskip\noindent (ii) Let $\Lambda$ be a smooth
  submanifold. Consider the commutative diagram
  \[
  \xymatrix{ \oim\rho\stkMod[\Gamma,\reghol](\stke_Y) \ar@{<->}[d]^\sim
    \ar[r]^{\mathrm{for}} &
    \stkMod[\Lambda,\reghol](\stkw_X) \ar@{<->}[d]^\sim \\
    \oim\rho\oim{p_1}\stack{LocSys}(\opb{p_1}\C_{\Omega_\Gamma^{\otimes 1/2}})
    \ar[r] & \stack{LocSys}(\hfield_{\Omega_\Lambda^{\otimes 1/2}}), }
  \]
  where $p_1\colon\Gamma \times \C^\times\to\Gamma$ is the
  projection.  The vertical equivalences are due to
  Proposition~\ref{pro:LYsmooth} and \cite[Corollary~9.2]{DS07},
  respectively. The bottom arrow is given by $L \mapsto \hfield
  \tens[\C] L|_{s = 1}$, where $s$ is the coordinate of $\C^\times$.

  This shows that the forgetful functor is locally essentially
  surjective.  To prove that it is not surjective in general, take
  $X=\C^\times$ and $\Lambda$ the zero section of
  $T^*(\C^\times)$. Then the local system with monodromy $1+\h$ around
  the origin is not in the essential image of the forgetful functor.

  \medskip\noindent (iii) follows from Proposition~\ref{pro:conex}
  below.
\end{proof}

Before stating Proposition~\ref{pro:conex} let us introduce some
notations.

Let $M=\C$. Denote by $(x,t;\xi,\tau)$ the symplectic coordinates of
$P^*(M\times\C)$ and by $(x;u)$ those of $T^*M$. Let $\shw = \shw_M$,
and recall that $\h = \partial_t^{-1}$. We will identify elements
$a\in\shw$ with their total symbol $a(x,u,\tau)$, and write for
example $a_x$ for the operator with total symbol
$\frac{\partial}{\partial x} a(x,u,\tau)$.

Denote by $\sho_M^\hbar = \shw/\shw \partial_x$ the canonical regular
holonomic module along the zero section
\[
\Lambda_1 = \{(x,u)\setdef u=0\}.
\]
The quotient map $\shw \to \sho_M^\hbar$, $b \mapsto [b]$ induces an
isomorphism of vector spaces $\sho_M^\hbar \xleftarrow{\sim}
C^0_{x}\shw$ with the subring of operators whose total symbol does not
depend on $\partial_x$.

For $m \in \Z_{>0}$, consider the Lagrangian subvariety $\Lambda =
\Lambda_1 \cup \Lambda_2$, with
\[
\Lambda_2 = \{(x,u)\setdef u=x^m\}.
\]
For $a \in C^0_{x}\shw$, let $\shm_a$ be the regular holonomic module
along $\Lambda$ with generators $v_1$, $v_2$ and relations
\[
\partial_x v_1 = 0, \quad (\partial_x - x^m\partial_t) v_2 = a v_1.
\]
Note that
\[
\shm_a \simeq C^0_{x}\shw\, v_1 \dsum C^0_{x}\shw\, v_2.
\]

Let $a' \in C^0_{x}\shw$ be another operator. If $[a-a']\in
(\partial_x - x^m\partial_t)\sho_M^\hbar$, then $\shm_a \isoto
\shm_{a'}$. In fact, if $e\in C^0_{x}\shw$ satisfies $a-a' = e_x -
x^m e \partial_t$, an isomorphism $\shm_a \isoto
\shm_{a'}$ is given by 
$v_1 \mapsto v_1'$, $v_2 \mapsto v'_2 + e v'_1$. 
Since $\sho_M^\hbar/(\partial_x -
x^m\partial_t)\sho_M^\hbar \simeq \DSum_{i=0}^{m-1}\hfield x^i$, we
may thus assume that
\[
a = a_0 + a_1 x + \cdots + a_{m-1} x^{m-1}\quad\text{with $a_i\in\hfield$.}
\]
The following counterexample was developed by the second author (M.K.)
while working with Pierre Schapira at~\cite{KS08}.

\begin{proposition}\label{pro:conex}
  If $\shm_a \simeq \mathrm{for}(\shn)$ for some $\she_Y$-module
  $\shn$, then $a$ is homogeneous, i.e.~$a = a_{i_0} x^{i_0}$ for some
  $i_0\in\{0,\dots, m-1\}$.
\end{proposition}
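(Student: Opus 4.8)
The plan is to translate the statement into the solvability of one twisted linear equation and then into an elementary arithmetic incompatibility. First I would unwind what \(\mathrm{for}(\shn)\cong\shm_a\) means. By Proposition~\ref{pro:SEglobal}~(i) the functor \(\rho^*_\stke\) is an equivalence, so giving an \(\she_Y\)-module is the same as giving a coherent \(\she_{[\rho]}\)-module, and \(\mathrm{for}\) is just restriction of scalars along \(\shw=C^0_\h\stke_{[\rho]}\hookrightarrow\stke_{[\rho]}\). Since \(\she_{[\rho]}\simeq\shw\langle t\rangle\) (the isomorphism \(S\mapsto t\) used to prove Noetherianity), I would observe that \(\shm_a\simeq\mathrm{for}(\shn)\) for some \(\shn\) if and only if the \(\shw\)-module \(\shm_a\) carries a \(\C\)-linear operator \(T\) (``multiplication by \(t\)'') obeying the twisted Leibniz rule \(T(wu)=w\,Tu+\delta(w)\,u\) for \(w\in\shw\), \(u\in\shm_a\), where \(\delta=\ad(t)\). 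A short commutator computation from \([t,\h]=\h^2\) and \([t,x]=[t,\partial_x]=0\) gives \(\delta(w)=\h^2\partial_\h w\) on \(\shw\), together with \(\delta(\partial_x)=0\) and \(\delta(x^m\partial_t)=-x^m\).

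Next I would compute \(T\) on the generators. Applying \(T\) to \(\partial_x v_1=0\) and using \(\delta(\partial_x)=0\) gives \(\partial_x(Tv_1)=0\); since \(\ker(\partial_x\colon\shm_a\to\shm_a)=\hfield\,v_1\) (the competing solution on the \(v_2\)-part is \(\exp(\tfrac{x^{m+1}}{m+1}\h^{-1})\), which is not a symbol), we get \(Tv_1=h_1v_1\) with \(h_1\in\hfield\). Writing \(Tv_2=\beta v_2+\gamma v_1\) with \(\beta,\gamma\in C^0_x\shw\) and applying \(T\) to \((\partial_x-x^m\partial_t)v_2=av_1\), one finds, with \(P=\partial_x-x^m\partial_t\), the identity \(P(Tv_2)=-x^m v_2+(ah_1+\delta(a))v_1\). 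Comparing \(v_2\)-components forces \(\beta_x=-x^m\), hence \(\beta=-\tfrac{x^{m+1}}{m+1}+\beta_0\) with \(\beta_0\in\hfield\); comparing \(v_1\)-components reduces the existence of \(T\) to solving \(\gamma_x-x^m\h^{-1}\gamma=ah_1+\delta(a)-\beta a\) for \(\gamma\in C^0_x\shw\).

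I would then locate the obstruction. Under \(C^0_x\shw\simeq\sho_M^\hbar\) the left action of \(P\) is exactly \(\gamma\mapsto\gamma_x-x^m\h^{-1}\gamma\), whose cokernel is \(Q=\sho_M^\hbar/P\sho_M^\hbar\simeq\DSum_{i=0}^{m-1}\hfield x^i\). Thus the displayed equation is solvable if and only if \([\delta(a)+\tfrac{x^{m+1}}{m+1}a]=c\,[a]\) in \(Q\), where \(c=\beta_0-h_1\) ranges freely over \(\hfield\). From \([Pw]=0\) one reads \([x^{m+j}]=j\h\,[x^{j-1}]\) in \(Q\), so the operator \(a\mapsto[\delta(a)+\tfrac{x^{m+1}}{m+1}a]\) is diagonal in the basis \(\{x^i\}\), acting on the coefficient of \(x^i\) by \(L_i=\h^2\partial_\h+\tfrac{i+1}{m+1}\h\). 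Liftability therefore amounts to \(L_i(a_i)=c\,a_i\) for all \(i\) with one common \(c\in\hfield\).

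The conclusion is now arithmetic: if \(a_i\neq 0\) has lowest \(\h\)-degree \(p_i\), then the lowest-order term of \(L_i(a_i)=c\,a_i\) forces \(\ord(c)=1\) with leading coefficient \(p_i+\tfrac{i+1}{m+1}\) (note \(p_i+\tfrac{i+1}{m+1}\neq 0\)). Since \(0\le i\le m-1\) the residues \(\tfrac{i+1}{m+1}\in(0,1)\) are pairwise incongruent modulo \(\Z\), so the scalars \(p_i+\tfrac{i+1}{m+1}\) are pairwise distinct; hence no single \(c\) can serve two indices, and at most one \(a_i\) is nonzero, i.e.\ \(a=a_{i_0}x^{i_0}\). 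The hard part is the setup in the first and third paragraphs—correctly identifying \(\shm_a\simeq\mathrm{for}(\shn)\) with the existence of the \(\delta\)-twisted operator \(T\), carrying out the commutator bookkeeping (\(\delta=\h^2\partial_\h\), \(\delta(x^m\partial_t)=-x^m\)), and pinning the obstruction to a class in the explicit cokernel \(Q\). Once that is in place, the final incompatibility of the residues \(\tfrac{i+1}{m+1}\) is immediate.
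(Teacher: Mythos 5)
Your proposal is correct and follows essentially the same route as the paper's proof: both reduce the existence of $\shn$ to the existence of a $\C$-linear operator $t$ (your $T$) on $\shm_a$ satisfying $[t,x]=[t,\partial_x]=0$, $[t,\partial_t]=-1$, compute it on the generators $v_1,v_2$ to extract the same pair of equations, and conclude by comparing leading $\h$-orders, where your residues $p_i+\tfrac{i+1}{m+1}$ are exactly the paper's classes $(m+1)p_i+i$ in $(m+1)\Z+\filt[-1]\hfield$ after rescaling. Your repackaging of the obstruction as vanishing of a class in the cokernel $Q=\sho_M^\hbar/P\sho_M^\hbar$ with the diagonal operators $L_i=\h^2\partial_\h+\tfrac{i+1}{m+1}\h$ is a clean presentational variant, not a different argument.
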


\begin{proof}
  The existence of such an $\shn$ is equivalent to the existence of an
  endomorphism $t$ of $\shm_a$ such that $[t,x] = [t,\partial_x] = 0$
  and $[t,\partial_t] = -1$.

  \smallskip\noindent (i) Let $t v_1 = b v_1 + c v_2$ for $b,c\in
  C^0_{x}\shw$. Then
  \begin{align*}
    0 &= t \partial_x v_1 = \partial_x t v_1 \\
    &= \partial_x (b v_1 + c v_2) \\
    &= b_x v_1 + c_x v_2 + c(x^m \partial_t v_2 + a v_1).
  \end{align*}
  Hence
  \[
  b_x + ac = 0, \quad x^m c\partial_t + c_x = 0.
  \]
  It follows from the second equation that $c=0$. Thus the first
  equation implies that $b\in\hfield$. Up
  to replacing $t$ by $t-b$, we may assume that $t v_1 = 0$.

  \smallskip\noindent (ii) Let $t v_2 = b v_1 + c v_2$ for $b,c\in
  C^0_{x}\shw$. Then
  \begin{align*}
    0 &= t \bigl( (\partial_x - x^m\partial_t) v_2 - a v_1 \bigr) \\
    &= (\partial_x - x^m\partial_t) t v_2 + x^m v_2 - [t,a] v_1 \\
    &= (\partial_x - x^m\partial_t)(b v_1 + c v_2) + x^m v_2 - [t,a] v_1 \\
    &= b_x v_1 + c_x v_2 + c(x^m\partial_t v_2 + a v_1) - x^m
    b\partial_t v_1 - x^m c \partial_t v_2 + x^m v_2 - [t,a]v_1.
  \end{align*}
  Hence
  \begin{equation}
\label{eq:abc}
  ac + b_x - x^m b \partial_t - [t,a] = 0, \quad c_x + x^m = 0.
  \end{equation}
  The second equation gives $c = -\frac{x^{m+1}}{m+1} + d$ for $d\in
  \hfield$.  Then, the first equation in \eqref{eq:abc}
  can be rewritten
  \[
  (\ad(\partial_x) - x^m\partial_t)(xa + (m+1)b\partial_t) 
- (xa_x - ea + (m+1)\partial_t[t,a]) =  0,
  \]
  for $e = (m+1)d\partial_t -1\in\hfield$. Hence 
$xa+(m+1)b\partial_t = x a_x -ea + (m+1)\partial_t[t,a] = 0$.
Since $a =
  \sum_{i=0}^{m-1} a_i x^i$, it implies that
$\sum_{i=0}^{m-1} ((e-i) a_i - (m+1)\partial_t [t,a_i])  x^i = 0$. 
Hence we have $(e-i) a_i - (m+1)\partial_t[t,a_i] = 0$ for every $i$. 
Thus we have either $a_i=0$ or $e =\frac{(m+1)\partial_t[t,a_i]}{a_i}+i$.
Since $\frac{(m+1)\partial_t[t,a_i]}{a_i} \in (m+1)\Z + \filt[-1]\hfield$, this implies $a = a_{i_0} x^{i_0}$ for some
  $0\leq i_0\leq m-1$.
\end{proof}

\providecommand{\bysame}{\leavevmode\hbox to3em{\hrulefill}\thinspace}

\end{document}